

\documentclass[10 pt,a4paper]{article}

\usepackage{latexsym,amsfonts,amsmath,amsthm,amssymb, graphicx, mathrsfs, fancyhdr, makeidx, amscd,  color, mathdesign, fontenc, stix, comment}
%
%
%
\usepackage[all]{xy}

\newtheorem{theorem}{Theorem}[section]

\newtheorem{definition}[theorem]{Definition}
\newtheorem{example}[theorem]{Example}

\newtheorem{notation}[theorem]{Notation}

\newtheorem{remark}[theorem]{Remark}

\newcommand{\R}{\mathbb{R}}

\newcommand{\metric}{\langle \, , \, \rangle}

\newcommand{\disp}{\displaystyle}

\newcommand{\ra}{\rightarrow}

\newcommand{\eps}{\varepsilon}

\newcommand{\II}{\mathrm{II}}
\newcommand{\Sph}{\mathbb{S}}
\newcommand{\di}{\mathrm{d}}

\newcommand{\HH}{\mathbb{H}}

\newcommand{\Ricc}{\mathrm{Ric}}
\newcommand{\cut}{\mathrm{cut}}
\newcommand{\vol}{\mathrm{vol}}
\newcommand{\lip}{\mathrm{Lip}}
\newcommand{\loc}{\mathrm{loc}}

\newcommand{\capac}{\mathrm{cap}}

\newcommand{\LL}{\mathscr{L}}
\newcommand{\Sect}{\mathrm{Sec}}
\newcommand{\BB}{\mathbb{B}}
\newcommand{\smp}{(\mathrm{SMP}_\infty)}
\newcommand{\wmp}{(\mathrm{WMP}_\infty)}

\newcommand{\RR}{\mathbb{R}}

\DeclareMathOperator{\diver}{div\,}

\addtolength{\textwidth}{1cm}

\begin{document}

\author{Bruno Bianchini $\, ^{(1)}$ \and Giulio Colombo $\, ^{(2)}$ \and Marco Magliaro $\, ^{(3)}$ \and Luciano Mari $\, ^{(4)}$ \and Patrizia Pucci $\, ^{(5)}$ \and Marco Rigoli $\, ^{(2)}$}
\title{\textbf{Recent rigidity results for graphs with prescribed mean curvature}}
\date{}
\maketitle

\scriptsize \begin{center} $(1) \ \ $ Dipartimento di Matematica Pura e Applicata, Universit\`a degli Studi di Padova\\
Via Trieste 63, I-35121 Padova (Italy). E-mail: bianchini@dmsa.unipd.it
\end{center}

\scriptsize \begin{center} $(2) \ \ $ Dipartimento di Matematica,
Universit\`a
degli studi di Milano,\\
Via Saldini 50, I-20133 Milano (Italy). E-mail: giulio.colombo@unimi.it, marco.rigoli55@gmail.com
\end{center}

\scriptsize \begin{center} $(3) \ \ $ Departamento de Matem\'atica, Universidade Federal do Cear\'a\\
Campus do Pici -Bloco 914, Av. Humberto Monte s/n,  60.455-760 Fortaleza (Brazil). E-mail: marco.magliaro@gmail.com
\end{center}

\scriptsize \begin{center} $(4) \ \ $ Dipartimento di Matematica, Universit\'a degli Studi di Torino,\\
Via Carlo Alberto 10, I-10123 Torino (Italy). E-mail: luciano.mari@unito.it
\end{center}

\scriptsize \begin{center} $(5) \ \ $ Dipartimento di Matematica e Informatica, Universit\`a degli Studi di Perugia\\
Via Vanvitelli 1, I-06123 Perugia (Italy). E-mail: patrizia.pucci@unipg.it
\end{center}

\normalsize

\maketitle

\begin{center}
To Alberto Farina, on the occasion of his 50th birthday, with great esteem.
\end{center}

\tableofcontents


\begin{abstract}
	This survey describes some recent rigidity results obtained by the authors for the prescribed mean curvature problem on  graphs $u : M \ra \R$. Emphasis is put on minimal, CMC and capillary graphs, as well as on graphical solitons for the mean curvature flow, in warped product ambient spaces. A detailed analysis of the mean curvature operator is given, focusing on maximum principles at infinity, Liouville properties, gradient estimates. Among the geometric applications, we mention the Bernstein theorem for positive entire minimal graphs on manifolds with non-negative Ricci curvature, and a splitting theorem for capillary graphs over an unbounded domain $\Omega \subset M$, namely, for CMC graphs satisfying an overdetermined boundary condition.\footnote{\noindent \textbf{Keywords:} complete Riemannian manifolds; entire solutions; minimal graphs; mean curvature operator; solitons.\\
\noindent \textbf{MSC2010:} Primary: 35R01,  35B53, 35B08;  Secondary: 53C42, 58J05, 35R45.
}
\end{abstract}

\section{Introduction}


The purpose of this survey is to illustrate some recent rigidity results, obtained by the authors, about the prescribed mean curvature problem on a complete Riemannian manifold $M$. It is a pleasure to dedicate our work to Alberto Farina on the occasion of his birthday, and to have the opportunity to discuss some of his significant contributions to the theory. Hereafter, we shall always restrict to real valued functions $u: M \ra \R$, and their associated entire graphical hypersurfaces $\Sigma \hookrightarrow \R \times M$ described by the map 
\[
\varphi : M \ra \R \times M, \qquad \varphi(x) = \big( u(x),x\big)
\]
(throughout the paper, the word \emph{entire} will mean defined on all of $M$). Taking into account the huge literature on hypersurfaces with prescribed mean curvature, notably on minimal and constant mean curvature (CMC) ones, some choices have to be made, and we decided to exclusively focus on references considering the graphical case in codimension $1$. Thus, unfortunately, important works concerning embedded (immersed) hypersurfaces have been omitted, unless they provide new results in the graphical case too. Concerning the prescribed mean curvature problem in Euclidean space, that will be briefly recalled in the next section, we recommend the surveys \cite{simon,farina_minimal} for an excellent account. Simon's \cite{simon} focuses on the minimal surface equation, while Farina's \cite{farina_minimal} includes Liouville theorems for more general operators.\par

\subsection{The prescribed curvature problem in $\R \times M$}

To put our results into perspective, we briefly recall the main achievements concerning the problem on $\R^m$. We begin with entire minimal graphs in $\R^{m+1}$, described by functions $u : \R^m \ra \R$ solving the minimal surface equation
\begin{equation}\label{eq_minimal_Rm}
\diver \left( \frac{Du}{\sqrt{1+|Du|^2}} \right) = 0,
\end{equation}
that is the Euler-Lagrange equation of the area functional 
\begin{equation}\label{eq_functional_min}
\Omega \Subset \R^m \mapsto \int_\Omega \sqrt{1+|Du|^2}\di x
\end{equation}
with respect to compactly supported variations. In 1915, Bernstein \cite{bernstein} proved that the only solutions of \eqref{eq_minimal_Rm} on $\R^2$ are affine functions (his proof, highly nontrivial, has been perfected in \cite{hopf_bern,mickle}, where a missing detail was corrected). Since then, various other proofs appeared in the literature, each one exploiting some different peculiarities of the two dimensional setting, especially: the possibility to use holomorphic function theory and the uniformization theorem \cite{nitsche,heinz_2,fcs,docarmo_peng}, or the quadratic volume growth of $\R^2$ coupled with the specific form of the Gauss-Bonnet theorem \cite{pogorelov,simon,mt}. A presentation of some of them can be found in \cite{farina_minimal}. The problem whether Bernstein's rigidity theorem holds for minimal graphs over $\R^m$ if $m \ge 3$ challenged researchers for long, and the efforts to answer this question gave rise to many new and deep techniques in Geometric Analysis. The first proof of Bernstein theorem that allowed for a possible higher dimensional generalization was given by Fleming \cite{fleming}, and since then, in few years, the problem has been completely solved affirmatively in dimension $m \le 7$ by works of De Giorgi (\cite{dg1,dg2}, $m=3$), Almgren (\cite{almgren}, $m=4$) and Simons (\cite{simons}, $m \le 7$), while Bombieri, De Giorgi and Giusti \cite{bdgg} constructed the first counterexample if $m \ge 8$ (a large class of further examples was later given by Simon \cite{simon_jdg}). Summarizing, the following theorem holds: 
\begin{equation}\label{bernstein_1}\tag{$\mathscr{B}1$}
\text{each minimal graph over $\R^m$ is affine} \qquad \Longleftrightarrow \qquad m \le 7.	 
\end{equation}
If $u$ satisfies some a-priori bounds, then more rigidity is expected. Indeed, as a consequence of the local  gradient estimate for minimal graphs $u : B_{r}(x) \subset \R^m \ra \R$ due to Bombieri, De Giorgi and Miranda \cite{bdgm}:
\[
|Du(x)| \le c_1\exp\left\{ c_2 \frac{u(x)- \inf_{B_r}u}{r} \right\}, 
\]
for some constants $c_j(m)$, one deduces both that
\begin{equation}\label{bernstein_2}\tag{$\mathscr{B}2$} 
\text{positive minimal graphs on $\R^m$ are constant, for each $m \ge 2$}
\end{equation}
and that
\begin{equation}\label{bernstein_3}\tag{$\mathscr{B}3$}
\begin{array}{c}	
\text{minimal graphs on $\R^m$ with at most linear growth have bounded gradient,} \\ 
\text{thus, by work of Moser \cite{jmoser} and De Giorgi \cite{dg1,dg2}, they are affine.}
\end{array}
\end{equation}		
Further splitting results in the spirit of \eqref{bernstein_3} can be obtained by just assuming the boundedness of some of the directional derivatives of $u$. In particular, improving on work of Bombieri and Giusti \cite{bg}, Farina in \cite{far1,far2} showed that $u$ is affine provided that $n-7$ partial derivatives of $u$ are bounded on one side.\par  
The possible existence of a similar results for entire minimal graphs defined on more general complete Riemannian manifolds $(M^m,\sigma)$ heavily depends on the geometry of $M$. If $\R \times M$ is endowed with the product metric 
\[ 
\metric = \di t^2 + \sigma
\]
then the (normalized) mean curvature $H$ of the image $\varphi(M)$ in the upward pointing direction is given by 
\[
mH = \diver \left( \frac{Du}{\sqrt{1+|Du|^2}} \right), 
\] 	
formally the same as for $\R^m$, where now $D, \diver$ are the gradient and divergence in $(M, \sigma)$. In particular, entire minimal graphs solve \eqref{eq_minimal_Rm} on $M$. However, for instance if $M$ is the hyperbolic space $\HH^m$, \eqref{eq_minimal_Rm} admits uncountably many bounded solutions. Indeed, the Plateau problem at infinity is solvable for every continuous $\phi$ on the boundary at infinity $\partial_\infty \HH^m$, that is, there exists $u$ solving \eqref{eq_minimal_Rm} on $\HH^m$ and approaching $\phi$ at infinity. The result was shown in \cite{nr} for $m=2$, while the higher dimensional case, to our knowledge, first appeared in 2010 as a special case of \cite{ES_F_R}. We note in passing that, as a consequence of Theorem \ref{teo_graph_soloricci_intro} below, any solution of Plateau's problem on $\HH^m$ has a bounded gradient.\par
Recently, various interesting works	investigated the Plateau's problem at infinity on Cartan-Hadamard (CH) manifolds $M$, that is, on complete, simply connected manifolds with non-positive sectional curvature $\Sect$, elucidating the sharp thresholds of $\Sect$ to guarantee its solvability or not (cf. \cite{rite_1,chr,chr2,chh,hr,chh_nonexistence}). The picture is quite subtle and an updated account can be found in \cite{esko_survey}. In particular, having denoted with $r$ the distance from a fixed origin $o \in M$, we mention that the Plateau problem at infinity on such $M$ is solvable whenever any of the following pinching conditions hold outside of a compact set $B_R(o)$: 
\[
\begin{array}{ll}
\disp -\frac{e^{2 \kappa r}}{r^{2+\eps}} \le \Sect \le - \kappa^2 & \quad \text{(by \cite{rite_1}, see also \cite[Cor 1.7]{chr}),} \\[0.3cm]
\disp - r^{2(\kappa'-2)-\eps} \le \Sect \le - \frac{\kappa'(\kappa'-1)}{r^2} & \quad \text{(by \cite[Thm. 1.5]{chr}),}
\end{array}
\]
for constants $\eps, \kappa>0$, $\kappa'>1$. Hereafter, an inequality $\Sect \le G(r)$ will shortly mean 
\[
\Sect(\pi_x) \le G\big(r(x)\big) \qquad \text{for every $x \in M$ and $2$-plane $\pi_x \le T_xM$.}
\] 	
In general, a two-sided bound on $\Sect$ is necessary, although a (suitable) upper bound suffices on rotationally symmetric CH manifolds (cf. \cite{chh_nonexistence}). Indeed, in \cite{hr} the authors constructed a $3$-dimensional CH manifold with $\Sect \le -\kappa^2<0$, that admits bounded solutions of the minimal surface equation, but for which the Dirichlet problem at infinity is not solvable for \emph{any} nonconstant boundary datum $\phi$. The dimension restriction is not removable, since for $m=2$ the one-sided condition $\Sect \le - \kappa^2 < 0$ suffices for the solvability of the Plateau problem at infinity, see \cite{galvezrosenberg}. We also mention the surprising existence of parabolic, entire minimal graphs in $M^2 \times \R$, with $M^2$ a CH surface satisfying $\Sect \le - \kappa^2 < 0$, obtained in \cite{collinrosenberg, galvezrosenberg}.\par
It is therefore tempting to ask whether results similar to those for $\R^m$ may hold on manifolds with non-negative curvature, and in view of the structure theory initiated by Cheeger and Colding in  \cite{cc_almost_rigidity,cc_1,cc_2,cc_3}, a particularly intriguing class to investigate in this respect is that of manifolds with $\Ricc \ge 0$. This is enforced by the analogy with the behaviour of harmonic functions on $M$. In fact, positive harmonic functions on a manifold with $\Ricc \ge 0$ are constant \cite{yau, chengyau}, and harmonic functions with linear growth provide splitting directions for the tangent cones at infinity of $M$, and for $M$ itself provided that $\Ricc \ge 0$ is strengthened to $\Sect \ge 0$ \cite{ccm}. The parallelism is even more evident by considering that \eqref{eq_minimal_Rm} rewrites as the harmonicity of $u$ on $\Sigma$ with its induced metric.\par
While we are aware of no references considering the full Bernstein theorem \eqref{bernstein_1} on spaces different from $\R^m$, and in fact the problem seems quite challenging even on manifolds with non-negative sectional curvature, recently some progresses were made on \eqref{bernstein_2}, \eqref{bernstein_3}. Below, we comment on the following recent result, independently due to the authors and to Ding \cite{ding}: 
\begin{theorem}[\cite{cmmr,ding}]\label{teo_B2}
	A complete manifold with $\Ricc \ge 0$ satisfies \eqref{bernstein_2}: positive solutions of \eqref{eq_minimal_Rm} on the entire $M$ are constant.
\end{theorem}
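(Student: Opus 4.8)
The plan is to let positivity upgrade the local gradient estimate for minimal graphs into a \emph{global} gradient bound, and then to run a Harnack inequality for the resulting uniformly elliptic equation. Concretely, I would first invoke the graphical gradient estimate on manifolds with $\Ric \ge 0$ (Theorem \ref{teo_graph_soloricci_intro}), which mirrors the Bombieri--De Giorgi--Miranda bound recalled above and reads
\begin{equation*}
|Du(x)| \,\le\, c_1 \exp\!\left\{ c_2\, \frac{u(x) - \inf_{B_r(x)} u}{r} \right\}, \qquad c_j = c_j(m),
\end{equation*}
for every $u$ solving \eqref{eq_minimal_Rm} on a ball $B_r(x) \subset M$ --- the crucial point being that under $\Ric \ge 0$ the constants depend only on $m = \dim M$. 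If $u > 0$ on the entire $M$, then $\inf_{B_r(x)} u \ge 0$, hence $u(x) - \inf_{B_r(x)} u \le u(x)$; since $M$ is complete the estimate holds for every $r > 0$, and letting $r \to +\infty$ with $x$ fixed gives $|Du(x)| \le c_1$. As $x$ is arbitrary, $\sup_M |Du| \le c_1$.

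With $|Du|$ globally bounded, \eqref{eq_minimal_Rm} becomes uniformly elliptic. Setting $a(x) := \big(1 + |Du(x)|^2\big)^{-1/2}$, the function $u$ is a positive solution of the linear divergence-form equation $\diver(a\,Du) = 0$ on $M$, with $a$ smooth and $(1+c_1^2)^{-1/2} \le a \le 1$. Since $\Ric \ge 0$, the manifold $(M,\sigma)$ enjoys the volume doubling property (Bishop--Gromov) together with a scale-invariant $L^2$ Poincar\'e inequality (Buser), so the De Giorgi--Nash--Moser machinery (Moser's iteration) applies and positive solutions of $\diver(a\,Du) = 0$ satisfy a scale-invariant Harnack inequality
\begin{equation*}
\sup_{B_R(x_0)} u \,\le\, C \inf_{B_R(x_0)} u, \qquad C = C(m, c_1), \quad \text{for all } x_0 \in M,\ R > 0 .
\end{equation*}

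To deduce constancy I would set $\mu := \inf_M u \ge 0$, so that $v := u - \mu \ge 0$ still solves the (linear) equation $\diver(a\,Dv) = 0$ and $\inf_M v = 0$. Picking $x_j \in M$ with $v(x_j) \to 0$ and any fixed $x_0 \in M$, the Harnack inequality on $B_{R_j}(x_0)$ with $R_j := \dist(x_0,x_j) + 1 \to +\infty$ yields $v(x_0) \le \sup_{B_{R_j}(x_0)} v \le C \inf_{B_{R_j}(x_0)} v \le C\, v(x_j) \to 0$, whence $v(x_0) = 0$. Since $x_0$ was arbitrary, $u \equiv \mu$ is constant.

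The main obstacle is the first step: the gradient estimate with dimension-only constants on manifolds with $\Ric \ge 0$. Unlike in $\R^m$, the ambient space $\R \times M$ is not flat, so the Bochner/Jacobi-type computation for $|Du|$ (equivalently, for the angle function $1/\sqrt{1+|Du|^2}$) along the graph produces additional curvature terms that must be absorbed using $\Ric_M \ge 0$; ensuring that the constants so obtained do not degenerate is the delicate core of the argument, and it is precisely what the detailed analysis of the mean curvature operator carried out in this survey is designed to supply. A secondary but genuine technical point is that the Harnack constant $C$ above must be truly independent of the radius $R$, which is exactly what the doubling $+$ Poincar\'e package afforded by $\Ric \ge 0$ guarantees.
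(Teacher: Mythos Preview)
Your overall plan---gradient bound first, then Harnack---matches the general strategy the survey describes, and the second step is fine: once $|Du|$ is globally bounded, doubling and Poincar\'e under $\Ricc \ge 0$ yield a scale-invariant Harnack inequality and hence constancy. The problem is in the first step. You misquote Theorem~\ref{teo_graph_soloricci_intro}: it does \emph{not} give a local Bombieri--De Giorgi--Miranda type estimate on balls. Rather, for a positive entire minimal graph on a complete $M$ with $\Ricc \ge -(m-1)\kappa^2$ it provides the global bound
\[
\sqrt{1+|Du|^2} \,\le\, e^{\kappa u\sqrt{m-1}} \qquad \text{on all of } M.
\]
For $\kappa = 0$ this reads $\sqrt{1+|Du|^2} \le 1$, hence $Du \equiv 0$ and $u$ is constant \emph{immediately}: no Harnack step is needed at all. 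This is how the paper (via \cite{cmmr}) actually proves Theorem~\ref{teo_B2}.

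The distinction is not cosmetic. Prior to \cite{cmmr,ding}, a local BDM-type estimate with dimension-only constants was known only under an additional lower bound on the \emph{sectional} curvature (see the discussion of \cite{rosenbergschulzespruck} just before Theorem~\ref{teo_graph_soloricci_intro}); obtaining it from $\Ricc$ alone is exactly the difficulty you correctly flag as ``the main obstacle'' but then assume away by citing the wrong form of the theorem. The paper's proof of Theorem~\ref{teo_graph_soloricci_intro} is intrinsically global: it replaces the distance-function cutoff in Korevaar's scheme---whose Hessian cannot be controlled with $\Ricc$ alone---by $\varrho = \log v$ with $v$ a smooth exhaustion solving $\Delta_g v \le v$ on the graph, produced via stochastic completeness and the AK-duality. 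Ding's alternative \cite{ding} avoids gradient estimates entirely, transplanting doubling and a weak Poincar\'e inequality to the graph and running Harnack there. Your proposed route---local estimate on $B_r(x)$, then $r \to \infty$---would require precisely the local estimate that neither approach furnishes.
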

Under the further condition $\Sect \ge - \kappa^2$ for some constant $\kappa>0$, Theorem \ref{teo_B2} was proved by Rosenberg, Schulze and Spruck in \cite{rosenbergschulzespruck}. If the Ricci and sectional curvatures are allowed to be negative somewhere, then the situation is different: in this respect, we quote the recent  \cite{chh_nonexistence} by Casteras, Heinonen and Holopainen, ensuring the constancy of positive minimal graphs with at most linear growth on manifolds with asymptotically non-negative sectional curvature and only one end. Regarding property \eqref{bernstein_3}, the rigidity of manifolds supporting linearly growing, nonconstant minimal graphs was studied in \cite{dingjostxin,dingjostxin2} on manifolds satisfying  
\[
\left\{ \begin{array}{l}
\Ricc \ge 0, \qquad \disp \lim_{r \ra \infty} \frac{|B_r|}{r^m} > 0 \\[0.3cm]
\text{the curvature tensor decays quadratically}.
\end{array}\right.
\]
Together with equation \eqref{eq_minimal_Rm}, that we will investigate in Section \ref{sec_gradient}, in the present survey we will also consider energies containing a potential term, of the type
\[
\Omega \Subset M \qquad \longmapsto \qquad \int_\Omega \Big( \sqrt{1+|Du|^2} + b(x)F(u) \Big) \di x, 
\]
with $0 < b \in C(M)$ and $f \in C^1(\R)$. The corresponding Euler-Lagrange equation becomes
\begin{equation}\label{eq_H}
\diver \left( \frac{Du}{\sqrt{1+|Du|^2}} \right) = b(x)f(u), 
\end{equation} 	
with $f = F'$. This family of equations includes the capillarity equation $b(x) f(u) = \kappa u$, for some constant $\kappa > 0$, that has been thoroughly studied, for instance, in \cite{finn_book}. 

\begin{notation}
	Hereafter, given two positive functions $f,g$, with $f \gtrsim g$ we mean $f \ge C g$ for some positive constant $C$, and similarly for $f \lesssim g$. We write $f \in L^1(+\infty)$ to denote
	\[
	\int^{+\infty} f(s)\di s < +\infty,
	\]
	and similarly for $f \in L^1(0^+)$. When there is no risk of confusion, we write $\infty$ instead of $+\infty,-\infty$. 
\end{notation}

If $f$ is non-decreasing, the behaviour of solutions of \eqref{eq_H} on $\R^m$ is well understood, thanks to the works of Tkachev \cite{tkachev, tkachev_2}, Naito and Usami \cite{nu} and Serrin \cite{Serrin_4}, see also \cite[Thm. 10.4]{farina_minimal}.

\begin{theorem}[\cite{tkachev,nu,Serrin_4}]\label{teo_tkachev}
	If $f$ be continuous and non-decreasing on $\R$, $f \not \equiv 0$ and suppose  
	\[
	b(x) \gtrsim \big( 1 + |x| \big)^{-\mu} \qquad \text{on } \, \R^m,
	\]	
	for some constant $\mu < 1$. Then a solution $u$ of \eqref{eq_H} on $\R^m$ satisfies $f(u) \equiv 0$ on $\R^m$, and thus it is constant by \eqref{bernstein_2}.
\end{theorem}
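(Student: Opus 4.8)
The plan is to argue by contradiction and prove that $f(u)$ cannot be positive anywhere; by the symmetry $(u,f)\mapsto\big(-u,\,-f(-\,\cdot\,)\big)$, which leaves the mean curvature operator and equation \eqref{eq_H} invariant and preserves continuity and monotonicity of $f$, this gives $f(u)\equiv 0$, and then, since $u$ solves \eqref{eq_minimal_Rm} and takes values in the closed interval $\{f=0\}$ (an interval, because $f$ is continuous and non-decreasing, and bounded on at least one side, because $f\not\equiv 0$), it is an entire minimal graph bounded from one side, hence constant by \eqref{bernstein_2} — for such a $u$ the gradient estimate of Bombieri--De Giorgi--Miranda yields a bounded gradient, whence $u$ is affine by Moser and De Giorgi, and being bounded on one side it is constant. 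So suppose $f\big(u(x_0)\big)>0$; after a translation (which costs only a constant in $b\gtrsim(1+|x|)^{-\mu}$, since $(1+|x-x_0|)^{-\mu}\gtrsim(1+|x|)^{-\mu}$) we may take $x_0=0$, and since $f$ is continuous and non-decreasing there are $\eps,\delta>0$ with
\[
f(t)\ge\delta\qquad\text{for every }t\ge t_\ast:=u(0)-\eps .
\]
Set $E:=\{x\in\R^m:u(x)>t_\ast\}$, an open set containing $0$; on $E$ one has $\diver\!\big(Du/\sqrt{1+|Du|^{2}}\big)=bf(u)\ge\delta b$.

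The main a priori tool is the \emph{flux estimate}. The vector field $X:=Du/\sqrt{1+|Du|^{2}}$ satisfies $|X|<1$ everywhere and $\diver X=bf(u)$, so for every smooth bounded domain $\Omega\subset\R^m$ the divergence theorem gives $\int_\Omega bf(u)=\int_{\partial\Omega}\langle X,\nu\rangle\le\mathscr{H}^{m-1}(\partial\Omega)$. With $\Omega=B_R$ this reads $\int_{B_R}bf(u)\,\di x\le\omega_{m-1}R^{m-1}$, where $\omega_{m-1}:=\mathscr{H}^{m-1}(\partial B_1)$; and applying it to (a regularization of) $E\cap B_R$, and noting that along $\partial E\subset\{u=t_\ast\}$ the outward normal of $E$ is $-Du/|Du|$, so that $\langle X,\nu\rangle\le0$ there, one obtains the sharper estimate
\[
\delta\!\int_{E\cap B_R}\! b\,\di x\ \le\ \int_{E\cap B_R}\! bf(u)\,\di x\ \le\ \mathscr{H}^{m-1}\big(\overline E\cap\partial B_R\big)\ \le\ \omega_{m-1}R^{m-1}.
\]
Since $b\gtrsim(1+|x|)^{-\mu}$ and $(1+|x|)^{-\mu}\ge(1+R)^{-\mu}$ on $B_R$, the left-hand side is $\gtrsim\delta\,(1+R)^{-\mu}\,\mathscr{H}^{m}(E\cap B_R)$; thus $\mathscr{H}^{m}(E\cap B_R)\lesssim R^{\,m-1+\mu}$, i.e. (as $\mu<1$) the set $E$ is \emph{thin} in $B_R$, while conversely, \emph{if} one can show that $E$ has positive lower density, $\mathscr{H}^{m}(E\cap B_R)\gtrsim R^{m}$, then the same chain of inequalities forces $R^{\,m-\mu}\lesssim R^{m-1}$ for all large $R$, which is absurd precisely because $\mu<1$. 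Hence everything reduces to proving that $E$ has positive density at infinity.

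This last step is the heart of the argument, and where the monotonicity of $f$ and the special structure of the mean curvature operator — its comparison principle and the a priori height estimates it enjoys — are used decisively. First, the strong maximum principle excludes interior local maxima of $u$ with value $>t_\ast$: at such a point $p$ one would have $0\ge\Delta u(p)=\diver\!\big(Du/\sqrt{1+|Du|^{2}}\big)(p)=b(p)f(u(p))\ge\delta\,b(p)>0$, impossible; consequently every super-level set $\{u>s\}$ with $s<u(0)$ is unbounded. Second, one compares $u$ with radial solutions of $\diver\!\big(Dw/\sqrt{1+|Dw|^{2}}\big)=\delta\,c\,(1+|x|)^{-\mu}$: for such a $w$, integrating the radial form of the operator,
\[
\frac{w'(r)}{\sqrt{1+w'(r)^{2}}}=\delta\, c\,r^{1-m}\!\int_0^r(1+s)^{-\mu}s^{m-1}\,\di s\ \xrightarrow[\ r\to\infty\ ]{}\ +\infty
\]
precisely because $\mu<1$, whereas the left-hand side is always $<1$; hence no such $w$ is an entire graph — its gradient becomes vertical at a finite radius. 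Combining this with the comparison principle and the inequality $\diver(Du/\sqrt{1+|Du|^{2}})=bf(u)\ge\delta b$ on $E$, one propagates the positivity $u(0)>t_\ast$ across larger and larger regions and concludes that $E$ cannot be thin; this is carried out in detail by Tkachev \cite{tkachev,tkachev_2}, Naito--Usami \cite{nu} and Serrin \cite{Serrin_4} (see also \cite[Thm.\ 10.4]{farina_minimal}), and it closes the proof.

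I expect the main obstacle to be exactly this propagation step. The flux estimate is one-sided: it only forces $E$ to be thin and is perfectly compatible with $f(u)\not\equiv0$, so by itself it proves nothing; turning the single inequality $f(u(0))>0$ into the geometric statement that $\{u>t_\ast\}$ fills a definite proportion of every large ball requires genuinely exploiting that $u$ solves a mean curvature equation, and the sharp threshold $\mu<1$ is precisely what makes both the thinness bound fail to be vacuous and the radial comparison functions blow up at a finite radius.
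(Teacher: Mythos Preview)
The paper states this theorem as a cited result and gives no self-contained proof; however, the surrounding text points to the ``capacity argument'' of Tkachev and Serrin, and the (commented-out) proof of the manifold generalization Theorem~\ref{teo_tkachev_2} makes this explicit: one integrates the equation against the test function $\phi=(f(u))_+^{\alpha-1}\psi$ with $\psi$ a power cutoff, uses $f'\ge 0$ to drop the term containing $f'(u)|Du|^2$, applies H\"older with exponent $\alpha$, and obtains
\[
\int_{\{f(u)>0\}} b\,f(u)^\alpha\,\psi \ \lesssim\ \int b^{\,1-\alpha}\,\frac{|D\psi|^\alpha}{\psi^{\alpha-1}} \ \lesssim\ R^{\,m-\mu-\alpha(1-\mu)} \xrightarrow[R\to\infty]{}0
\]
for $\alpha$ large, since $\mu<1$. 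Hence $f(u)\le 0$, and by the odd reflection $f(u)\equiv 0$. This argument is short and closed.

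Your proposal follows a genuinely different route --- flux bound plus density --- but it is \emph{incomplete}, and you say so yourself. The flux estimate $\int_{E\cap B_R} b\,f(u)\le \mathscr{H}^{m-1}(\partial B_R)$ is correct and gives $\mathscr{H}^m(E\cap B_R)\lesssim R^{\,m-1+\mu}$; the contradiction would follow from a lower density bound $\mathscr{H}^m(E\cap B_R)\gtrsim R^m$, which you do not prove. The two ingredients you propose for it do not suffice: that super-level sets are unbounded (from the absence of interior maxima) says nothing about their volume growth, and the blow-up of radial subsolutions at a finite radius is a statement about \emph{entire} graphs, whereas you only control $u$ on the set $E$, with no usable information on $\partial E$. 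Converting these into a density lower bound is the whole content of the theorem, and you end up deferring it back to \cite{tkachev,nu,Serrin_4}. The capacity/test-function method above is precisely designed to bypass this step: it never needs to know how large $E$ is, only that on $E$ the nonlinearity is bounded below.
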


\begin{remark}
	\emph{Condition $f \not \equiv 0$, together with $f(u) \equiv 0$, guarantees that $u$ is one sided bounded and thus the applicability of \eqref{bernstein_2}. It is therefore essential, as the example of affine functions show. 
	}
\end{remark}

\begin{remark}
	\emph{The result establishes that the only graphs with constant mean curvature (CMC) defined on the entire $\R^m$ must be minimal, a result first proved by Heinz \cite{heinz} for $m=2$, and later extended independently  by Chern \cite{chern} and Flanders \cite{flanders} to all $m\ge 3$. As a matter of fact, Salavessa in \cite{salavessa} observed that the result is true on each manifold with zero Cheeger constant, in particular, on complete manifolds with subexponential volume growth.
	}
\end{remark}

In particular, in the assumptions of Theorem \ref{teo_tkachev} the only solution of the capillarity equation on $\R^m$ is $u \equiv 0$. In Theorem \ref{teo_B2} below, we shall prove that Theorem \ref{teo_tkachev} holds on each manifold with $\Ricc \ge 0$; precisely, the implication $f(u) \equiv 0$ only requires that $M$ has polynomial volume growth of geodesic balls, a condition implied (via Bishop-Gromov comparison), but not equivalent, to $\Ricc \ge 0$. On the contrary, as we shall see below, Theorem \ref{teo_tkachev} fails on manifolds with faster than polynomial volume growth, cf. Remark \ref{rem_polysharp}. We note that the restriction $\mu< 1$ can be relaxed for the capillarity case, the sharp threshold being $\mu < 2$ by recent work of Farina and Serrin \cite{farinaserrin1}, and we will come back on this later, see Theorem \ref{teo_capillarity_pre} below.\par
It is instructive to compare Theorem \ref{teo_tkachev} to a corresponding result for the Laplace operator, say for $b \equiv 1$: To make the statement easier, we assume that $f$ has only one zero, so up to translation $f(0)=0$, $t f(t) > 0$ for $t \in \R \backslash \{0\}$, and set 
\begin{equation}\label{def_F}
F(s) = \int_0^s f(t) \di t.
\end{equation}
Then, by works of Keller \cite{keller} and Osserman \cite{osserman}, the only solution of 
\begin{equation}\label{eq_lapla}
\Delta u = f(u) \qquad \text{on } \, \R^m
\end{equation}
is $u \equiv 0$ if and only if the Keller-Osserman conditions
\begin{eqnarray}
& \disp 	\int^{+\infty} \frac{\di s}{\sqrt{F(s)}} < \infty, & \label{KO} \\
& \disp 	\int_{-\infty} \frac{\di s}{\sqrt{F(s)}} < \infty & \label{KO_menoinfty}
\end{eqnarray}
hold. More precisely, \eqref{KO} guarantees the following Liouville property:
\begin{equation}\label{Lio}
\text{any solution of $\Delta u \ge f(u)$ is bounded above and satisfies $f(u^*) \le 0$,} 
\end{equation}
where $u^* : = \sup_{\R^m} u$. Thus, $u \le 0$ on $\R^m$. The reverse inequality $u \ge 0$ follows by noticing that $\bar u : = -u$ solves $\Delta \bar u = \bar f(\bar u)$ with $\bar f(t) = -f(-t)$, so condition \eqref{KO_menoinfty} is simply a rewriting of \eqref{KO} applied to $\bar f$.

\begin{remark}
	\emph{Regarding the case of non-constant weights $b$, let $f>0$ on $\R^+$, and assume that $f$ is increasing for large enough $t$. If
		\[
		b(x) \gtrsim \big( 1 + |x|\big)^{-2} \qquad \text{on } \, \R^m,
		\]
		then \eqref{KO} is necessary and sufficient to guarantee that solutions of 
		\[
		\Delta u \ge b(x) f(u) \qquad \text{on } \, \R^m
		\]
		satisfy $\sup_M u \le 0$. This result can be obtained, for instance, by combining Theorems 10.3 and Corollary 10.23 in \cite{bmpr}. The quadratic lower bound for $b$ is sharp, since positive entire solutions were constructed in \cite{dughgowa,bmr_JFA} when \eqref{KO} holds and  
		\[
		b(x) \le B(|x|), \qquad \text{with } \, \int^{+\infty} s B(s)\di s < \infty.
		\]
	}
\end{remark}

\begin{remark}
	\emph{We mention that \eqref{KO} is sufficient for the validity of the Liouville property \eqref{Lio} on complete manifolds satisfying 
		\begin{equation}\label{eq_quadraricci} 
		\Ricc(\nabla r, \nabla r) \ge - \kappa^2(1+r^2) \qquad \text{on } \, M \backslash \cut(o),
		\end{equation}
		for some constant $\kappa>0$, where $r$ is the distance from a fixed origin $o$, and $\cut(o)$ is the cut-locus of $o$. This result follows, for example, as a particular case of \cite[Cor 10.23]{bmpr}, and condition \eqref{eq_quadraricci} is sharp for its validity. Furthermore, \eqref{KO} is also a necessary condition for \eqref{Lio} for a large class of manifolds, including each Cartan-Hadamard space, see \cite[Thm. 2.31]{bmpr}. By Bishop-Gromov comparison theorem, note that \eqref{eq_quadraricci} implies the following quadratic exponential bound for the volume of balls $B_r$ centered at $o$:
		\begin{equation}\label{stoccomp}
		\limsup_{r \ra \infty} \frac{\log|B_r|}{r^2} < \infty. 
		\end{equation}
	}
\end{remark}


The absence of a growth requirement on $f$ in Theorem \ref{teo_tkachev} might be partially justified by considering the Laplacian and the mean curvature operator as members of the the family of quasilinear operators
\[
\Delta_\varphi u : = \diver \left( \frac{\varphi(|Du|)}{|Du|} Du \right), 
\]
for $\varphi \in C(\R^+_0) \cap C^1(\R^+)$ with $\varphi'>0$ on $\R^+$, with respectively the choices $\varphi(t) = t$ and $\varphi(t) = t/\sqrt{1+t^2}$. Letting 
\[
K(t) = \int_0^t s \varphi'(s) \di s
\]
denote the pre-Legendre transform of $\varphi$, note that $K$ realizes a diffeomorphism from $\R^+_0$ to $[0, K_\infty)$. If $K_\infty = \infty$, then the integrability qualifying as the right Keller-Osserman condition associated to $f$ and $\Delta_\varphi$ writes as
\begin{equation}\label{KO_general}
\int^{\infty} \frac{\di s}{K^{-1}(F(s))} < \infty,
\end{equation}	
see the detailed account in \cite[Sec. 10]{bmpr}. Note that \eqref{KO_general} rewrites as \eqref{KO} for the Laplace-Beltrami operator. By \cite{nu}, in the assumptions of Theorem \ref{teo_tkachev} on $f$, \eqref{KO_general} is necessary and sufficient for the vanishing of a solution of $\Delta_\varphi u = f(u)$, at least in the case $\varphi(\infty) = \infty$ (that implies $K_\infty = \infty$, cf. also \cite[Remark 10.2]{bmpr}).\par
However, the mean curvature operator does not satisfy $K_\infty = \infty$, and Theorem \ref{teo_tkachev} might suggest that a Liouville theorem for solutions of \eqref{eq_H}, in the spirit of \eqref{Lio}, could hold on reasonably well-behaved manifolds $M$, arguably those satisfying \eqref{eq_quadraricci} or \eqref{stoccomp}, without the need to control the growth of $f$ at infinity. However, we shall see below that a suitably defined Keller-Osserman condition will appear again as soon as $M$ has a volume growth faster than \emph{polynomial}. The guiding example is again $M = \HH^m$, where the existence problem for \eqref{eq_H} was recently studied in \cite{bckrt}.

\subsection{Further ambient spaces, and MCF solitons}

Ambient manifolds of the type $\R \times M$ leave aside various cases of interest, notably interesting representations of the hyperbolic space $\HH^{m+1}$. This and other examples motivated us to consider graphs in more general ambient manifolds $\bar M$, that foliate topologically as products $\R \times M$ along the flow lines of relevant, in general not parallel, vector fields. A sufficiently large class for our purposes is that of warped products
\begin{equation}\label{warpedproducts}
\bar M = \R \times_h M, \qquad \text{with metric} \qquad \bar g = \di s^2 + h(s)^2 g,
\end{equation}
for some positive $h \in C^\infty(\R)$. Note that $X = h(s) \partial_s$ is a conformal field with geodesic flow lines. Given $v : M \ra \R$, we define the graph
$$
\Sigma^m = \Big\{ (s,x) \in \R \times M, \ \ s = v(x)\Big\},
$$
that we call a geodesic graph. For instance, $\HH^{m+1}$ admits the following representations:
\begin{itemize}
	\item[(i)] $\HH^{m+1} = \R \times_{e^s} \R^m$, with the slices of constant $s$, called horospheres, being flat Euclidean spaces with normalized mean curvature $1$ in the direction of $-\partial_s$. This follows by changing variables $s = - \log x_0$ in the upper half-space model
	\begin{equation}\label{eq_uhs}
	\HH^{m+1} = \Big\{ (x_0,x) \in \R \times \R^{m} \ \ : \ \ x_{0}>0\Big\}, \qquad \bar g = \frac{1}{x_0^2}\Big( \di x_0^2 + g_{\R^m}\Big);
	\end{equation}
	\item[(ii)] $\HH^{m+1} = \R \times_{\cosh s} \HH^m$, with the slices $\{s = s_0\}$ for constant $s_0$, called hyperspheres, being totally umbilical hyperbolic spaces of normalized mean curvature $H = \tanh(s_0)$ in the direction of $-\partial_s$, and sectional curvature $- (\cosh s_0)^{-2}$.
\end{itemize}

In cases (i) and (ii) we say that $\Sigma$ is, respectively, a geodesic graph over horospheres and hyperspheres. Do Carmo and Lawson in \cite{docarmolawson} investigated geodesic graphs in $\HH^{m+1}$ with constant mean curvature $H \in [1,1]$, both over horospheres and hyperspheres, and proved the following Bernstein type theorem:
\[
\begin{array}{l}
\text{if $\Sigma$ is over a horosphere, then $\Sigma$ is a horosphere (in particular, $H = \pm 1$)}\\
\text{if $\Sigma$ is over a hypersphere, then $\Sigma$ is a hypersphere.}
\end{array}
\]
Their result is part of a more general statement, that draws the rigidity of a properly embedded CMC hypersurface $\Sigma$ from the rigidity of its trace on the boundary at infinity $\partial_\infty \HH^{m+1}$. The proof, relying on the moving plane method, seems difficult to adapt to variable mean curvature, and the search for an alternative approach was one among the geometric motivations that lead us to write the monograph \cite{bmpr}.\par
The equations for geodesic graphs with prescribed mean curvature $H$ is more transparent when written in terms of the parameter $t$ of the flow $\Phi$ of $X$, related to $s$ by the equation
\begin{equation}\label{bonitinho}
t = \int_0^s \frac{\di \sigma}{h(\sigma)}, \qquad t : \R \ra t(\R) : = I.
\end{equation}
Set  
\begin{equation}\label{eq_uv}
\lambda(t) = h\big( s(t)\big), \qquad u(x) = t\big( v(x)\big),  
\end{equation}
so $u : M \ra I$ and $\Sigma$ writes as $\{ t = u(x)\}$. Clearly, $u = v$ when $\bar M = \R \times M$ with the Riemannian product metric. Computing the normalized mean curvature $H$ of $\Sigma^m$ with respect to the upward--pointing unit normal
\begin{equation}\label{normal_geodesic}
\nu = \frac{1}{\lambda(u)\sqrt{1+|D u|^2}} \Big( \partial_t - (\Phi_u)_* D u \Big),
\end{equation}
we obtain the following equation for $u : M \ra I$:
\begin{equation}\label{prescribed_geodesic}
\diver \left( \frac{D u}{\sqrt{1+|D u|^2}} \right) = m \lambda(u) H + m \frac{\lambda_t(u)}{\lambda(u)} \frac{1}{\sqrt{1+|D u|^2}} \quad \text{on } \, M,
\end{equation}
where $\lambda_t$ is the derivative of $\lambda$ with respect to $t$.
Clearly, if $H = 0$ this is an equation of type
\begin{equation}\label{problems}
\diver\left( \frac{D u}{\sqrt{1+|D u|^2}} \right) = b(x)f(u)\ell(|D u|)\quad \text{on } \, M,
\end{equation}
for suitable $b,f,\ell$. Equation \eqref{problems} will be our focus for the first part of this survey. Another prescription of $H$ that leads to an equation of the same type arises from the study of mean curvature flow solitons. Briefly, consider a mean curvature flow (MCF) evolving from $\Sigma$, that is, a solution $\varphi : [0,T] \times M \ra \bar M$ of
\[
\left\{ \begin{array}{l}
\disp \frac{\partial \varphi_t}{\partial t} = m H_{t} \qquad \text{on } \, M, \\[0.2cm]
\varphi(0,x) = (u(x),x)
\end{array}\right.
\]
with $H_t$ the mean curvature of $\Sigma_t = \varphi_t(M)$ in the direction of the normal $\nu_t$ chosen to match \eqref{normal_geodesic} at time $t=0$; we call $\varphi$ a soliton with respect to a field $Y$ on $\bar M$, if $\varphi$ moves $\Sigma$ along the flow $\Psi$ of $Y$, that is, if there exists a tangent vector field $T$ on $M$ with flow $\eta_t$, and a time reparametrization $t \mapsto \tau(t)$, such that
\[
\varphi(x,t) = \Psi \big( \tau(t), \eta(t,x) \big).
\]
Differentiating at $t=0$, we see that a MCF soliton satisfies the identity $\bar g(Y,\nu) = m H$. Interesting solitons arise when $Y$ is a conformal or Killing field, and in our case the choice $Y = \pm h(s) \partial_s$ allows to rewrite  \eqref{prescribed_geodesic} as the following equation:
\begin{equation}\label{eq_soliton}
\diver\left( \frac{D u}{\sqrt{1+|D u|^2}}\right) = \left[ \frac{m \lambda_t(u) \pm \lambda^3(u)}{\lambda(u)} \right] \frac{1}{\sqrt{1+|D u|^2}} = \frac{m h'(v) \pm h^2(v)}{\sqrt{1+|D u|^2}}
\end{equation}
with $v = t^{-1}(u)$. In particular, a self-translator in $\R^{m+1}$, that is, a soliton for the vertical direction $\partial_s$, satisfies 
\begin{equation}\label{translator}
\diver\left( \frac{D u}{\sqrt{1+|D u|^2}}\right) = \frac{ \pm 1}{\sqrt{1+|D u|^2}}.
\end{equation}
We mention that the existence problem for equations tightly related to \eqref{eq_soliton}, describing certain $f$-minimal graphs in $\R \times M$, has recently been considered in \cite{chh_fminimal}.

\begin{remark}
	\emph{Another relevant class of ambient spaces is that of products $M \times_h \R$ with metric $\bar g =  g + h(x)^2\di s^2$, for some $0 < h \in C^\infty(M)$. In this case $X= \partial_s$ is Killing but not parallel, unless $h$ is constant. For instance, $\HH^{m+1}$ admits two such warped product decompositions, according to whether $X$ has one or two fixed points at infinity: the first can be obtained by isolating the coordinate $s = x_m$ in the upper half-space model \eqref{eq_uhs}, leading to
		\begin{equation}\label{wp_trasla}
		\HH^{m+1} = \HH^m \times_{h} \R \qquad \text{with } \, h(x_0,\ldots, x_{m-1}) = \frac{1}{x_0};
		\end{equation}
		the second can be written as
		\begin{equation}\label{wp_radial}
		\HH^{m+1} = \HH^m \times_{\cosh r} \R, \qquad  \bar g = g_{\HH^m}
		+ \big(\cosh^ 2r(x)\big)\di s^2,
		\end{equation}
		with $r : \HH^m \ra \R$ the distance from a fixed origin in $\HH^m$, and corresponds, in the upper half--space model, to the fibration of $\HH^{m+1}$ via Euclidean lines orthogonal to the totally geodesic hypersphere $\{ x_0^2 + |x|^2 = 1\}$. There is no rigidity for graphs in $\HH^{m+1}$ along these decompositions: the Plateau's problem at infinity for graphs with constant $H \in (0,1)$ is always solvable, by work of Guan and 
		Spruck \cite{guanspruck} for \eqref{wp_radial}, and Ripoll and Telichevesky \cite{rite} for \eqref{wp_trasla}. Existence for the prescribed mean curvature equation on more general products $M \times_h \R$ is studied in \cite{warped}.
	}
\end{remark}

\section{Analytic behaviour of the mean curvature operator}

Hereafter, all solutions of differential equations and inequalities will be meant in the weak sense. In our study of the rigidity problem for equations of the type
\begin{equation}\label{eq_eq_general}
\diver \left( \frac{Du}{\sqrt{1+|Du|^2}} \right) = \mathscr F(x,u,D u)
\end{equation}
on a complete, connected manifold $M$, we assume
\begin{equation}\label{assu_Fgen}
\mathscr F(x,t,p) \ge \frac{f(t)}{(1+r(x))^\mu} \frac{|p|^{1-\chi}}{\sqrt{1+|p|^2}},
\end{equation}
for suitable $\chi \in [0,1]$ and $\mu \in \R$, reducing the problem to the investigation of solutions of
\begin{equation}\label{eq_nostraineq}
\diver \left( \frac{Du}{\sqrt{1+|Du|^2}} \right) \ge \frac{f(u)}{(1+r)^\mu} \frac{|Du|^{1-\chi}}{\sqrt{1+|Du|^2}} \qquad \text{on } \, M.
\end{equation}
Note that, for $\chi \in (0,1)$, the RHS is allowed to vanish both as $|Du| \ra 0$ and as $|Du| \ra \infty$. The form \eqref{assu_Fgen} is sufficiently general to encompass all of our cases of interest: for instance, \eqref{prescribed_geodesic} for $H=0$ is included by setting
\[
\mu = 0, \qquad f(t) = m \frac{\lambda_t(t)}{\lambda(t)} = m h'(s(t)), \qquad \chi =1,
\]
while the capillarity equation
implies the next inequality for each $\chi \in [0,1]$:
\[
\diver \left( \frac{Du}{\sqrt{1+|Du|^2}} \right) = \kappa u \ge \frac{\kappa}{C_\chi} u \frac{|Du|^{1-\chi}}{\sqrt{1+|Du|^2}} \qquad \text{on } \, \{u>0\},
\]
where we used the fact that $t^{1-\chi}/\sqrt{1+t^2} \le C_\chi$ on $\R^+$, for some constant $C_\chi > 0$. By the Pasting Lemma (Kato inequality, cf. \cite{dambrosiomitidieri_2}) $u_+ = \max\{u,0\}$ solves, in the weak sense,
\begin{equation}\label{eq_capi}
\diver \left( \frac{Du_+}{\sqrt{1+|Du_+|^2}} \right) \ge \frac{\kappa}{C_\chi} u_+ \frac{|Du_+|^{1-\chi}}{\sqrt{1+|Du_+|^2}} \qquad \text{on } \, M.
\end{equation}
Thus, we can choose
\[
\mu = 0, \qquad f(t) = \frac{\kappa}{C_\chi} \ \ \text{constant, and any } \, \chi \in [0,1].
\]	
As we shall see, the arbitrariness in the choice of $\chi$ will allow to reach the optimal thresholds for the rigidity of solutions to \eqref{eq_capi}.\par
Our goal in this section is to identify sharp conditions that force any non-constant solution of \eqref{eq_eq_general}, possibly already matching some a-priori bound, to satisfy $f(u) \equiv 0$ on $M$. If $f$ has just isolated zeroes, we would directly deduce that $u$ must be constant, otherwise the rigidity issue for solutions of
\[
\diver \left( \frac{Du}{\sqrt{1+|Du|^2}} \right) = 0
\]
requires further tools and assumptions, described in the next section. The argument to deduce $f(u) \equiv 0$ depends on the fact that any non-constant solution of \eqref{eq_nostraineq}, for suitable $f$ or provided that $u$ grows slowly enough, is automatically bounded from above and satisfies
\[
f(u^*) \le 0,
\]
where, hereafter in this survey,
\[
u^* : = \sup_M u, \qquad u_* : = \inf_M u.
\]		
\begin{remark}[\textbf{Constant solutions}]
	\emph{We observe that a constant function solves \eqref{eq_nostraineq} if and only if either
		\[
		\begin{array}{ll}
		\chi \in [0,1), & \quad \text{independently of } \, f, \ \text{ or} \\[0.2cm]
		\chi =1, & \quad f(u^*) \le 0.
		\end{array}
		\]
		Therefore, in what follows we concentrate on \emph{non-constant} solutions.
	}
\end{remark}	
The conclusion $f(u^*) \le 0$ should be interpreted as a maximum principle at infinity, in the spirit of \cite{prsmemoirs}. Indeed, if $M$ is compact and we assume $u \in C^2(M)$ and $\chi =1$, $f(u^*)\le 0$ follows readily by evaluating \eqref{eq_nostraineq} at a point $x_0$ realizing $u^*$, and using that $|Du(x_0)|=0$, $D^2 u(x_0) \le 0$. We shall be more detailed in the next section. To constrain solutions of \eqref{eq_eq_general}, the idea is to apply the maximum principle at infinity both to $u$ and to $\bar u : = -u$, which is possible if, for instance, $\mathscr{F}$ satisfies conditions like 
\[
\mathscr{F}(x,t,p) = -\mathscr{F}(x,-t,-p). 
\]
In this case, $\bar u$ solves
\[
\diver \left( \frac{D \bar u}{\sqrt{1+|D \bar u|^2}} \right) \ge \frac{\bar f(\bar u)}{(1+r)^\mu} \frac{|D\bar u|^{1-\chi}}{\sqrt{1+|D \bar u|^2}}, \qquad \text{with } \, \bar f(t) := -f(-t),
\]
and $\bar f(\bar u^*) \le 0$ rewrites as $f(u_*) \ge 0$. If, for instance, $f$ is increasing, we would eventually conclude the identity $f(u) \equiv 0$ on $M$. \par
Taking into account these facts, in the present section we focus on the maximum principle at infinity for solutions of inequality \eqref{eq_nostraineq}, and on related Liouville properties. Our purpose is to guide the reader through the results in \cite{bmpr}, discuss the sharpness of the assumptions and their interplay, and illustrate some geometric applications. For reasons of room, no proofs of the analytic results will be given or sketched, but their applications to geometric problems will be commented in some details.

\subsection{Weak and strong maximum principles at infinity}\label{subsec_WSMP}

For $u$ bounded from above, the necessity to deduce $f(u^*)\le 0$ even if $u^*$ is not attained motivated the introduction of the fundamental Omori-Yau maximum principles, \cite{omori,yau,chengyau}, that generated an active area of research related to potential theory on manifolds \cite{maripessoa,maripessoa_2}, with applications to a variety of geometric problems. We refer the reader to  \cite{AMR_book,bmpr,prsmemoirs} for a detailed account and an extensive set of references. In its original formulation, the Omori-Yau principle for the Laplace-Beltrami operator is the property that, for every $u \in C^2(M)$ bounded from above, there exists a sequence $\{x_j\} \subset M$ satisfying
\[
u(x_j) \ra u^*, \qquad |D u(x_j)| < \frac{1}{j}, \qquad \Delta u(x_j) < \frac{1}{j}.
\]
Clearly, if $u^*$ is attained at some $x$ then the sequence can be chosen to be constantly equal to $x$. The Omori-Yau principle is also called, in \cite{AMR_book,bmpr}, the \emph{strong maximum principle at infinity}. The fact that the gradient condition $|Du(x_j)|< 1/j$ is unnecessary in many applications motivated the introduction, in \cite{prsmemoirs}, of the related \emph{weak maximum principle at infinity}, asking for the validity, along a suitable sequence $\{x_j\}$, just of bounds   
\[
u(x_j) \ra u^*, \qquad \Delta u(x_j) < \frac{1}{j}.
\]
Examples show that the two are indeed different, that is, there exist manifolds satisfying the weak but not the strong maximum principle, cf. \cite{borbely,bmpr}. The weak principle turns out to have important links with the theory of stochastic processes, explored in \cite{prsmemoirs} and recalled later in this paper in more detail. In our setting, we shall look at suitable formulations of weak and strong principles, adapted to solutions with low regularity and to mean curvature type operators. Extension to more general operators can be found in \cite{bmpr} (quasilinear ones) and in \cite{maripessoa,maripessoa_2} (fully nonlinear ones).\par
Hereafter, for $\gamma \in \R$ and $\eps \in \R^+$ we use the shorthand notations $\{u> \gamma\}$ and $\{ u> \gamma, \ |Du|< \eps\}$ to mean, respectively, the sets
\[
\big\{ x \ : \ u(x) > \gamma\big\}, \qquad   \big\{ x \ : \ u(x) > \gamma, \ |Du(x)|< \eps \big\}
\]
\begin{definition}[\cite{bmpr}]
	Given $\mu \in \R$, $\chi \in [0,1]$, we say that
	\begin{itemize}
		\item the \emph{\textbf{strong maximum principle at infinity}}, $\smp$, holds if whenever $u \in C^1(M)$ is non-constant, bounded from above and solves
		\begin{equation}\label{eq_SMP_vera}
		\diver \left( \frac{Du}{\sqrt{1+|Du|^2}}\right) \ge \frac{K}{(1+r)^\mu} \frac{|Du|^{1-\chi}}{\sqrt{1+|Du|^2}} \qquad \text{on } \, \big\{ u > \gamma, \ |Du| < \eps \big\}
		\end{equation}
		for some constant $\eps>0$ and $K,\gamma \in \R$, then $K \le 0$.
		\item the \emph{\textbf{weak maximum principle at infinity}}, $\wmp$, holds if whenever $u \in \lip_\loc(M)$ is non-constant, bounded from above and solves
		\begin{equation}\label{eq_WMP_vera}
		\diver \left( \frac{Du}{\sqrt{1+|Du|^2}}\right) \ge \frac{K}{(1+r)^\mu} \frac{|Du|^{1-\chi}}{\sqrt{1+|Du|^2}} \qquad \text{on } \, \big\{ u > \gamma \big\}
		\end{equation}
		for some constant $K,\gamma \in \R$, then $K \le 0$.
	\end{itemize}
\end{definition}

\begin{remark}
	\emph{If $M$ is complete, by Ekeland's principle the set $\{ u > \gamma, \ |Du| < \eps\}$ is non-empty for each choice of $\gamma < u^*$ and $\eps>0$. The restriction $u \in C^1(M)$ is made necessary for such set to be open. Suitable notions of $\smp$ for viscosity solutions that are less regular than $C^1$ have been studied in \cite{maripessoa,maripessoa_2} in the setting of fully nonlinear potential theory.
	}
\end{remark}

\begin{remark}
	\emph{For $\chi = 1$ and $u \in C^2(M)$, then $\smp$ can equivalently be formulated as the existence of a sequence $\{x_j\} \subset M$ such that
		\begin{equation}\label{prop_wmp}
		u(x_j) \ra u^*, \qquad |Du(x_j)| < \frac{1}{j}, \qquad  \diver \left( \frac{Du}{\sqrt{1+|Du|^2}}\right)(x_j) < \frac{1}{j},
		\end{equation}
		making contact with the original formulation of Omori and Yau.
	}
\end{remark}

The definition of $\wmp$ is localized on upper level sets $\{u> \gamma\}$ of $u$, for $\gamma \in \R$. However, with the aid of the Pasting Lemma, it can easily be seen that the following statements are equivalent (cf. \cite[Prop. 7.4]{bmpr}):
\begin{itemize}
	\item[(i)] $\wmp$ holds;
	\item[(ii)] for every $f \in C(\R)$, any non-constant solution of 
	\[
	\diver \left( \frac{Du}{\sqrt{1+|Du|^2}} \right) \ge \frac{f(u)}{(1+r)^\mu} \frac{|Du|^{1-\chi}}{\sqrt{1+|Du|^2}} \qquad \text{on } \, M
	\]
	that is bounded from above satisfies $f(u^*) \le 0$.
\end{itemize}

The applications described below will motivate the study of the above maximum principles at infinity. In particular, we mention that although the $\wmp$ can be successfully used to study the rigidity of entire minimal graphs and MCF solitons, to obtain sharp results for hypersurfaces satisfying \eqref{prescribed_geodesic} with more general choices of $H$ we had to appeal to the full strentgh of the $\smp$. 

\begin{remark}
\emph{The above definition suggests that $\wmp$ can be interpreted as a comparison principle with constant functions on open, possibly unbounded sets, namely the upper level sets of $u$. We mention that the development of a comparison theory for solutions of \eqref{eq_eq_general} on unbounded sets is a rather interesting issue for which many questions are still open, even if the right-hand side does not depend on $Du$. Some sharp results in this respect can be found in \cite{prspac}, see also the references therein.}
\end{remark}

Examples show that some restriction on the geometry of $M$ has to be required in order to deduce the validity of $\wmp$ and $\smp$. Concerning the weak maximum principle, we prove in \cite[Thm. 7.5]{bmpr} the following sharp

\begin{theorem}[Thm. 7.5 in \cite{bmpr}]\label{teo_main_2_wmp}
	Let $M$ be a complete Riemannian manifold, and let
	\[
	\chi \in [0,1], \qquad \mu \le \chi+1.
	\]
	If
	\begin{equation}\label{volgrowth_sigmazero}
	\begin{array}{lll}
	\mu < \chi+1 & \text{and} & \disp \qquad \liminf_{r \ra \infty} \frac{\log|B_r|}{r^{\chi+1-\mu}} < \infty \quad (=0 \, \text{ if } \, \chi=0); \\[0.4cm]
	\mu = \chi+1 & \text{and} & \disp \qquad \liminf_{r \ra \infty} \frac{\log|B_r|}{\log r} < \infty \quad (\le 2 \, \text{ if } \, \chi=0),
	\end{array}
	\end{equation}
	then $\wmp$ holds. In particular, for $f \in C(\R)$, any non-constant solution $u \in \lip_\loc(M)$ of
	\[
	\diver \left( \frac{Du}{\sqrt{1+|Du|^2}}\right) \ge \frac{f(u)}{(1+r)^\mu} \frac{|Du|^{1-\chi}}{\sqrt{1+|Du|^2}} \qquad \text{on } \, M
	\]
	that is bounded from above satisfies $f(u^*) \le 0$.
\end{theorem}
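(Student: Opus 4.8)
The strategy I would follow is to argue by contradiction, showing that a failure of $\wmp$ forces $M$ to have a volume growth strictly faster than the one permitted by \eqref{volgrowth_sigmazero}. So, suppose $\wmp$ fails: there exist a non-constant $u\in\lip_\loc(M)$, bounded from above, and constants $K>0$, $\gamma<u^*:=\sup_M u$, such that, writing $Z:=Du/\sqrt{1+|Du|^2}$,
\[
\diver Z\;\ge\;\frac{K}{(1+r)^\mu}\,\frac{|Du|^{1-\chi}}{\sqrt{1+|Du|^2}}\qquad\text{weakly on }\ \Omega_\gamma:=\{u>\gamma\}.
\]
I would then work with the Lipschitz function $v:=(u-\gamma)_+$, which is non-constant, satisfies $0\le v\le v^*:=u^*-\gamma$, and has the property that every function of the form $v_\beta\psi^2$ — with $v_\beta:=\min\{v,\beta\}$, $0<\beta<v^*$, and $\psi$ a compactly supported Lipschitz cutoff — vanishes on $M\setminus\Omega_\gamma$. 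Hence the differential inequality, although only available on the open set $\Omega_\gamma$, can be tested against such functions as if it held globally. The one structural ingredient that makes the mean curvature operator more tractable here than, say, the $p$-Laplacian is that $|Z|<1$ pointwise, i.e. the flux of $Z$ is automatically bounded.

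The core computation is a Caccioppoli-type estimate. Fix $0<\beta<v^*$, choose $\psi=\psi_R$ with $\psi\equiv1$ on $B_R$, $\supp\psi\subset B_{2R}$, $0\le\psi\le1$, and test the inequality against $v_\beta\psi^2$. Integrating by parts, using $\langle Dv_\beta,Z\rangle=\mathbf 1_{\{v<\beta\}}|Dv|^2/\sqrt{1+|Dv|^2}\ge0$ and $|Z|<1$, and letting $\beta\uparrow v^*$, one is led to
\[
K\!\int_M\frac{v\,\psi^2}{(1+r)^\mu}\,\frac{|Dv|^{1-\chi}}{\sqrt{1+|Dv|^2}}\;+\;\int_M\psi^2\,\frac{|Dv|^2}{\sqrt{1+|Dv|^2}}\ \le\ 2\!\int_{B_{2R}\setminus B_R}v\,\psi\,|D\psi|.
\]
The two left-hand terms govern, respectively, the region where $|Dv|$ is small and the region where it is large; the right-hand side is a ``cutoff loss'' estimated by a $(1+r)^{-\mu}$-weighted volume of the annulus $B_{2R}\setminus B_R$, once $\psi$ is given the optimal (radial, possibly logarithmic) profile. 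From here I would iterate this estimate over dyadic scales $R_k=2^kR_0$ — equivalently, invoke a volume/capacity test in the spirit of Grigor'yan and Karp — converting the display into a recursion for the left-hand energies. Under \eqref{volgrowth_sigmazero}, with the exponent being precisely $\chi+1-\mu$ (resp. $\log r$ when $\mu=\chi+1$), the recursion forces $\int_M\psi_R^2|Dv|^2/\sqrt{1+|Dv|^2}\to0$ as $R\to\infty$, hence $Dv\equiv0$ and $u$ constant, a contradiction. Therefore $\wmp$ holds, and the final ``in particular'' assertion follows at once from the equivalence (i)$\Leftrightarrow$(ii) recalled above (cf. \cite[Prop. 7.4]{bmpr}): a bounded-above, non-constant solution of $\diver Z\ge(1+r)^{-\mu}f(u)|Du|^{1-\chi}/\sqrt{1+|Du|^2}$ on $M$ satisfies $f(u^*)\le0$.

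The crux, and the reason one cannot merely transcribe a Laplacian argument, is the degenerate factor $|Du|^{1-\chi}/\sqrt{1+|Du|^2}$, which vanishes both as $|Du|\to0$ (when $\chi<1$) and as $|Du|\to\infty$: the gradient-energy term $\int\psi^2|Dv|^2/\sqrt{1+|Dv|^2}$ must be \emph{kept} and played off against the weighted term, exploiting the non-constancy of $v$ so that $Dv$ cannot be uniformly small over a large ball — this is where a co-area or relative-isoperimetric input enters — to produce a genuinely scale-dependent lower bound. Calibrating this balance so that it yields \emph{exactly} the exponent $\chi+1-\mu$, neither weaker nor stronger, is the delicate point, and it is also what makes the thresholds in \eqref{volgrowth_sigmazero} sharp; the borderline subcases $\chi=0$, which are responsible for the strengthened requirements ``$=0$'' and ``$\le2$'' in \eqref{volgrowth_sigmazero}, require especially careful bookkeeping of the constants produced by the iteration.
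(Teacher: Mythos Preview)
The paper is a survey and does \emph{not} prove this theorem; it merely records it as Theorem~7.5 of \cite{bmpr} and explicitly states that ``no proofs of the analytic results will be given or sketched.'' So there is no in-paper proof to compare against, only the reference to the monograph.

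That said, your overall architecture---contradiction, test against $v_\beta\psi^2$ on the superlevel set, exploit the bound $|Z|<1$ to control the boundary term by $\int v\psi|D\psi|$, then iterate radially to force $Dv\equiv 0$ under the volume hypothesis---is exactly the scheme used in \cite{bmpr} (and its predecessors \cite{prsmemoirs,rigolisetti}). The displayed Caccioppoli inequality is correct and is the right starting point.

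Where your write-up becomes a genuine sketch rather than a proof is the passage ``iterate this estimate over dyadic scales\ldots converting the display into a recursion for the left-hand energies.'' As written, the two left-hand terms do not obviously combine into a single quantity that recurses cleanly: the $K$-term carries the weight $(1+r)^{-\mu}$ and the degenerate factor $|Dv|^{1-\chi}/\sqrt{1+|Dv|^2}$, while the gradient-energy term does not, and it is precisely in splicing these two together---via a pointwise algebraic inequality relating $|Dv|^{1-\chi}/\sqrt{1+|Dv|^2}$, $|Dv|^2/\sqrt{1+|Dv|^2}$, and a power of $(1+r)$---that the exponent $\chi+1-\mu$ is produced. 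You gesture at this (``played off against the weighted term'') but do not write the inequality down, and without it the recursion has no visible closing mechanism. The appeal to ``a co-area or relative-isoperimetric input'' is also unmotivated here: the argument in \cite{bmpr} is purely an integral-inequality iteration and uses no such geometric-measure-theoretic tool; non-constancy of $u$ enters only at the very end, to rule out $Dv\equiv 0$. Finally, the borderline cases $\chi=0$ (which force the sharper ``$=0$'' and ``$\le 2$'' thresholds) require a separate argument with a different test-function profile, not merely ``careful bookkeeping of the constants''; you should flag that a distinct computation is needed there.
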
	

Concerning the strong maximum principle at infinity, it is still an open problem to decide whether the volume growth \eqref{volgrowth_sigmazero} is sufficient. In \cite{bmpr}, we obtained sharp conditions by imposing the next lower bound on the Ricci curvature:

\begin{equation}\label{ricciassu_intro}
\Ricc (\nabla r, \nabla r) \ge -(m-1)\kappa^2\big( 1+r^2\big)^{\alpha/2} \qquad \text{on } \, M \backslash \cut(o),
\end{equation}
for constants $\kappa \ge 0, \alpha \ge -2$, where $r$ is the distance from a fixed origin $o$. Namely, we compare $M$ to the radially symmetric model $M_g$ being $\R^m$ endowed, in polar coordinates $(r,\theta)$,  with the metric
\[
\di s^2 = \di r^2 + g(r)^2 \di \theta^2,
\]
where $\di \theta^2$ is the metric on the unit sphere $\mathbb{S}^{m-1}$ and $g\in C^{\infty}(\R^+_0)$ is a solution of
\[
\left\{ \begin{array}{l}
g'' = \kappa^2(1+r^2)^{\alpha/2} g \qquad \text{on } \, \R^+, \\[0.2cm]
g(0)=0, \qquad g'(0)=1.
\end{array}\right.
\]
The case $\kappa=0$ corresponds to choosing the Euclidean space as model $M_g$, described by the function $g(r) = r$, while the case $\alpha=0$ corresponds to choosing the Hyperbolic space of curvature $-\kappa^2$, for which
\[
g(r) = \frac{\sinh(\kappa r)}{\kappa}.
\]
By \cite[Prop. 2.11]{prs} and the Bishop-Gromov comparison theorem, the volume of the geodesic balls $B_r$ and $\BB_r$ in $M$ and $M_g$, respectively, relate as follows:
\begin{equation}\label{crescitevol_counter}
\log|B_r| \le \log|\BB_r| \sim \left\{ \begin{array}{ll} \dfrac{2\kappa(m-1)}{2+\alpha} r^{1+ \alpha/ 2} & \quad \text{if } \, \alpha>-2, \\[0.2cm]
\big[(m-1)\bar\kappa +1\big] \log r & \quad \text{if } \, \alpha = -2,
\end{array}\right.
\end{equation}
as $r \ra \infty$, where $\bar \kappa = (1+\sqrt{1+4\kappa^2})/2$. Therefore, the inequalities
\[
\chi \in (0,1], \qquad \mu \le \chi - \frac{\alpha}{2}
\]
imply any of the conditions \eqref{volgrowth_sigmazero_Linfty_inthetheorem} with $\chi>0$:
\[
\begin{array}{lll}
\mu < \chi+1 & \text{and} & \disp \qquad \liminf_{r \ra \infty} \frac{\log |B_r|}{r^{\chi+1-\mu}} < \infty \qquad \text{or}\\[0.4cm]
\mu = \chi+1 & \text{and} & \disp \qquad \liminf_{r \ra \infty} \frac{\log|B_r|}{\log r} < \infty,
\end{array}
\]
respectively if either $\alpha>-2$ or $\alpha = -2$. Thus, the growth thresholds in the following (sharp) sufficient condition for $\smp$ in \cite[Thm. 8.5]{bmpr} turn out to be the same as for $\wmp$.

%

\begin{theorem}[Thm. 8.5 in \cite{bmpr}]\label{teo_SMP_intro}
	Let $M$ be a complete $m$-dimensional manifold satisfying \eqref{ricciassu_intro}, for some $\kappa \ge 0$, $\alpha \ge -2$. If
	\[
	\chi \in (0,1], \qquad \mu \le \chi - \frac{\alpha}{2}.
	\]
	Then, the $\smp$ holds.
\end{theorem}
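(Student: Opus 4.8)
The plan is to argue by contradiction by means of an Omori--Yau type sliding argument, built on a radial barrier whose growth is dictated by the Ricci bound \eqref{ricciassu_intro}. Suppose, to the contrary, that $\smp$ fails: then there is a non-constant $u\in C^1(M)$, bounded above, with $u^*:=\sup_M u$, and a constant $K>0$ such that, writing $\LL w:=\diver\!\big(Dw/\sqrt{1+|Dw|^2}\big)$,
\[
\LL u \;\ge\; \frac{K}{(1+r)^\mu}\,\frac{|Du|^{1-\chi}}{\sqrt{1+|Du|^2}} \qquad\text{on }\;\big\{u>\gamma,\ |Du|<\eps\big\}
\]
for some $\eps>0$ and $\gamma<u^*$. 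The goal is to reach a contradiction.

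First I would record the analytic consequence of \eqref{ricciassu_intro}: by the Laplacian comparison theorem, $\Delta r\le (m-1)g'/g$ weakly on $M\setminus\cut(o)$, where $g$ is the model function (solving $g''=\kappa^2(1+r^2)^{\alpha/2}g$, $g(0)=0$, $g'(0)=1$), and $g'(s)/g(s)\le c_1(1+s)^{\alpha/2}$ for $s\ge R_0$. I would then choose the radial barrier $G(r):=\log(1+r)$ — nonnegative, increasing, strictly concave, with $G(r)\to\infty$ and $|DG|=G'(r)=(1+r)^{-1}\to0$ — and slide it against $u$: for $\sigma\in(0,1)$ set $u_\sigma:=u-\sigma G(r)$. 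Since $u$ is bounded above and $G\to\infty$, $u_\sigma$ attains its supremum at some $x_\sigma\in M$; moreover $\sup_M u_\sigma\to u^*$ as $\sigma\to0$, whence $u(x_\sigma)\to u^*$ and $|Du(x_\sigma)|=\sigma/(1+r(x_\sigma))\le\sigma\to0$. Consequently, for all sufficiently small $\sigma$ the point $x_\sigma$ lies in the open set $\{u>\gamma,\ |Du|<\eps\}$, so the differential inequality holds weakly on a ball around $x_\sigma$.

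Next comes a dichotomy on the location of $x_\sigma$. If $r(x_\sigma)\ge R_0$, I would set $\psi:=\sigma G(r)+c$ with $c:=u_\sigma(x_\sigma)$, so that $\psi\ge u$ near $x_\sigma$ with equality at $x_\sigma$; if $x_\sigma\in\cut(o)$ I would first apply Calabi's trick, replacing $r$ by the distance to a point pushed slightly inside along a minimizing geodesic, to make $\psi$ of class $C^2$ near $x_\sigma$ without spoiling the touching or the Laplacian estimate. Since a weak subsolution of this quasilinear operator is a viscosity subsolution and $\psi$ touches $u$ from above at $x_\sigma$, one gets $\LL\psi(x_\sigma)\ge\frac{K}{(1+r(x_\sigma))^\mu}\frac{|D\psi(x_\sigma)|^{1-\chi}}{\sqrt{1+|D\psi(x_\sigma)|^2}}$. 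On the other hand, from the explicit formula
\[
\LL\psi \;=\; \frac{\sigma G''(r)}{\big(1+\sigma^2G'(r)^2\big)^{3/2}} \;+\; \frac{\sigma G'(r)}{\sqrt{1+\sigma^2G'(r)^2}}\,\Delta r,
\]
dropping the negative $G''$-term and inserting the Laplacian comparison, I obtain $\LL\psi(x_\sigma)\le \sigma c_1(1+\rho)^{\alpha/2-1}$ with $\rho:=r(x_\sigma)$, while $|D\psi(x_\sigma)|=\sigma/(1+\rho)$. Combining the two bounds gives $K\le c_2\,\sigma^{\chi}(1+\rho)^{\alpha/2+\mu-\chi}$; here the hypothesis $\mu\le\chi-\tfrac{\alpha}{2}$ makes the exponent of $(1+\rho)$ nonpositive, so $(1+\rho)^{\alpha/2+\mu-\chi}\le1$ and $K\le c_2\sigma^{\chi}$, which is impossible for $\sigma$ small precisely because $\chi>0$. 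Hence $r(x_\sigma)<R_0$ for every small $\sigma$; passing to the limit along a subsequence $x_\sigma\to x_\infty\in\overline{B_{R_0}}$ and using $u(x_\sigma)\to u^*$, one finds that $u$ attains its supremum at the interior point $x_\infty$, so $Du(x_\infty)=0$ and near $x_\infty$ the inequality reads $\LL u\ge0$; the strong maximum principle for the mean curvature operator then forces $u\equiv u^*$ on a neighbourhood of $x_\infty$, hence on all of $M$ by connectedness, contradicting the non-constancy of $u$.

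I expect the genuine difficulty to be the sharp choice of barrier: one must calibrate $G$ so that the powers of $\sigma$ and of $(1+\rho)$ balance exactly at the threshold $\mu\le\chi-\alpha/2$ — equivalently, so that the model volume growth $\log|B_r|\lesssim r^{1+\alpha/2}$ implied by \eqref{ricciassu_intro} is matched against the weight $r^{\chi+1-\mu}$, in parallel with Theorem \ref{teo_main_2_wmp}. The remaining technical points are routine but must be handled with care: the low regularity of $u$ (so the contact at $x_\sigma$ and the final step are run through the weak/viscosity formulation and the strong maximum principle for the quasilinear operator rather than classical Hessian inequalities) and the cut locus (Calabi's trick). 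It is worth stressing that the restriction $\chi>0$ is essential in this scheme — for $\chi=0$ the sliding produces the useless factor $\sigma^{0}\equiv1$ — and that the sharpness of the thresholds is established separately, via radial ODE solutions on the model manifolds $M_g$; whether the purely volume-theoretic condition \eqref{volgrowth_sigmazero} alone suffices for $\smp$ is, as mentioned, open.
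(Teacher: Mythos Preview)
Your approach is correct and is essentially the standard Omori--Yau sliding scheme that underlies the proof in \cite{bmpr}: one exploits the Ricci lower bound \eqref{ricciassu_intro} via Laplacian comparison to control $\Delta r$, builds a radial exhaustion, slides it against $u$, and closes with the strong maximum principle when the touching points stay in a compact set. The paper itself does not reproduce the argument (it only cites \cite[Thm.~8.5]{bmpr}), but your computation with $G(r)=\log(1+r)$ recovers exactly the balance $\mu-\chi+\alpha/2\le 0$ and the decisive factor $\sigma^{\chi}$, which is precisely why $\chi>0$ is needed; the handling of the cut locus via Calabi's trick and of the $C^1$ regularity via the weak/viscosity formulation is the expected route.
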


\subsection{Application: minimal and prescribed mean curvature graphs}  	

We present the following applications of the weak and strong maximum principles at infinity, that generalize Do Carmo-Lawson's theorem stated in the introduction. We begin with entire minimal graphs, proving

\begin{theorem}[\cite{bmpr}, Thms. 7.17, 7.18]\label{teo_bern_minimal_intro}
	Let $M$ be a complete manifold, and consider the warped product $\bar M = \R \times_h M$, with warping function $h$ satisfying either
	\begin{itemize}
		\item[$(i)$] $h$ is convex and $h^{-1} \in L^1(-\infty) \cap L^1(+\infty)$, or
		\item[$(ii)$] $h'>0$ on $\R$, $h'(s) \ge C$ for $s >>1$ and $h^{-1} \in L^1(+\infty)$.
	\end{itemize}
	If
	$$
	\liminf_{r \ra \infty} \frac{\log|B_r|}{r^2} < \infty,
	$$
	then
	\begin{itemize}
		\item[] under $(i)$, every entire minimal graph $v : M \ra \R$ over $M$ is bounded and satisfies $h'(v) \equiv 0$ on $M$. In particular, $v$ is constant if $h$ is strictly convex.
		\item[] under $(ii)$, there exists no entire minimal graph over $M$.
	\end{itemize}
\end{theorem}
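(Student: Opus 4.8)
The plan is to write the minimal graph equation for $v$ in the warped product $\bar M = \R \times_h M$ in terms of the flow parameter $u = t(v)$, where $t$ is defined by \eqref{bonitinho}, so that $\Sigma$ becomes $\{t = u(x)\}$ and, by \eqref{prescribed_geodesic} with $H = 0$,
\begin{equation}\label{eq_minimal_warped_proof}
\diver \left( \frac{Du}{\sqrt{1+|Du|^2}} \right) = m \frac{\lambda_t(u)}{\lambda(u)} \frac{1}{\sqrt{1+|Du|^2}} = \frac{m h'(v)}{\sqrt{1+|Du|^2}} \qquad \text{on } \, M.
\end{equation}
This is of the form \eqref{eq_nostraineq}--\eqref{eq_eq_general} with $\mu = 0$, $\chi = 1$, and $f(u) = m\lambda_t(u)/\lambda(u) = m h'(s(u))$. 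Since $\liminf_{r\to\infty}\log|B_r|/r^2<\infty$ is exactly the condition \eqref{volgrowth_sigmazero} with $\chi = 1$, $\mu = 0$, Theorem \ref{teo_main_2_wmp} gives the $\wmp$ on $M$, hence the equivalent formulation (ii) following it: for any $g\in C(\R)$, a non-constant $u\in\lip_\loc(M)$ bounded above with $\diver(Du/\sqrt{1+|Du|^2})\ge g(u)/\sqrt{1+|Du|^2}$ satisfies $g(u^*)\le 0$, and symmetrically for $\bar u=-u$. The heart of the argument is then to combine one-sided application of the $\wmp$ with the integrability hypotheses on $h^{-1}$, which control the \emph{range} $I = t(\R)$ of $u$, to force first the boundedness of $u$ and then $h'(v)\equiv 0$ or a contradiction.

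\textbf{Case $(i)$.} Here $h$ is convex, so $h'$ is non-decreasing; thus $f(u) = m h'(s(u))$ is a non-decreasing function of $u$ (as $s(t)$ is increasing). The condition $h^{-1}\in L^1(-\infty)\cap L^1(+\infty)$ means, by \eqref{bonitinho}, that $t:\R\to\R$ has bounded image: $I = t(\R) = (t_-, t_+)$ with $t_\pm$ finite. Hence $u = t(v)$ takes values in the bounded interval $I$, so $u$ is automatically bounded, both above and below. Being non-constant (otherwise there is nothing to prove — and note a constant $u$ solves \eqref{eq_minimal_warped_proof} only if $h'(v)\equiv 0$ at that value, giving the conclusion trivially), apply the $\wmp$ to $u$: with $g = f$ we get $f(u^*)\le 0$, i.e. $h'(s(u^*))\le 0$. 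Apply it to $\bar u = -u$, which solves the same type of inequality with $\bar f(\tau) = -f(-\tau)$, yielding $\bar f(\bar u^*)\le 0$, i.e. $f(u_*)\ge 0$, i.e. $h'(s(u_*))\ge 0$. Since $h'$ is non-decreasing and $u_*\le u\le u^*$, we get $h'(s(u))\le h'(s(u^*))\le 0\le h'(s(u_*))\le h'(s(u))$ pointwise, forcing $h'(v)\equiv 0$ on $M$. If moreover $h$ is strictly convex then $h'$ is strictly increasing, so $h'(v)\equiv 0$ pins $v$ to the single value $s(u)$ is constant, hence $v$ is constant. (One should also note that the endpoint values $t_\pm$ are not attained by $u$ unless $v$ is constant, but this is not needed: the $\wmp$ handles non-attained suprema.)

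\textbf{Case $(ii)$.} Now $h'>0$ everywhere, $h'(s)\ge C>0$ for $s\gg 1$, and only $h^{-1}\in L^1(+\infty)$ is assumed, so $I = t(\R) = (t_-, t_+)$ with $t_+<\infty$ but possibly $t_- = -\infty$. In particular $u = t(v)<t_+$ is \emph{bounded above}; it is non-constant iff $v$ is. First dispatch the constant case: if $v\equiv c$ then \eqref{eq_minimal_warped_proof} forces $h'(c)=0$, contradicting $h'>0$; so no constant entire minimal graph exists, and we may assume $v$ (hence $u$) non-constant. Apply the $\wmp$ to $u$ with $g = f$, $f(\tau) = m h'(s(\tau))>0$ for all $\tau\in I$: the conclusion $f(u^*)\le 0$ contradicts $f>0$. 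Hence no non-constant entire minimal graph exists either, proving the non-existence claim under $(ii)$. Note this uses only $h'>0$ and $h^{-1}\in L^1(+\infty)$ (to guarantee $u$ is bounded above so the $\wmp$ applies); the extra lower bound $h'(s)\ge C$ for $s\gg 1$ is what makes the hypothesis natural — it is the warped-product analogue of the Euclidean situation and ensures e.g. that $h^{-1}\in L^1(+\infty)$ is consistent with $h$ unbounded — but the contradiction already follows from positivity of $f$.

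\textbf{Main obstacle.} The only genuinely delicate point is verifying that $u = t(v)$ inherits the regularity ($u\in\lip_\loc(M)$, or $C^1$) and the weak-solution status of \eqref{eq_minimal_warped_proof} from $v$ being a weak entire minimal graph — i.e. that the change of variables \eqref{eq_uv} is legitimate at the level of weak solutions, and that one may freely pass to $\bar u = -u$ in the divergence-form inequality. Given the derivation of \eqref{prescribed_geodesic} recalled in the introduction this is routine, but it is where care is needed; everything else is a direct packaging of Theorem \ref{teo_main_2_wmp} via its equivalent formulation (ii), applied once to $u$ and once to $-u$, together with the elementary observation that the integrability of $h^{-1}$ at $\pm\infty$ is precisely what bounds (one or both ends of) the range of $u$.
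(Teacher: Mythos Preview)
Your proof is correct and follows essentially the same route as the paper's: pass to $u=t(v)$ via \eqref{bonitinho}--\eqref{eq_uv}, use the integrability of $h^{-1}$ to bound the relevant end(s) of the range $I$, and then apply Theorem~\ref{teo_main_2_wmp} with $\chi=1$, $\mu=0$ to $u$ (and to $-u$ in case~$(i)$) to force $h'(v^*)\le 0\le h'(v_*)$, concluding by the monotonicity of $h'$. Your observation that in case~$(ii)$ the hypothesis $h'(s)\ge C$ for $s\gg 1$ is not actually invoked in the argument---positivity of $h'$ together with $h^{-1}\in L^1(+\infty)$ already suffices---is correct and worth noting, though the paper does not comment on this.
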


\begin{remark}
	\emph{The integrability conditions in $(i),(ii)$ mean, loosely speaking, that the warped product $\bar M$ opens up faster than linearly for large $s$. The case of cones $h(s) = s$ is thus borderline. To recover the results in \cite{docarmolawson}, the choice $M = \HH^m$ and $h(s) = \cosh s$ leads to the constancy of entire minimal graphs over hyperspheres in $\HH^{m+1}$, while the choice $M = \R^m$ and $h(s) = e^s$ allows us to recover the non-existence of entire minimal graphs over horospheres.
	}
\end{remark}

\begin{proof}[Proof: sketch]
	In view of \eqref{prescribed_geodesic} and the minimality of $\Sigma$, the function $u=t(v)$, $u : M \ra I$, solves
	\begin{equation}\label{eq_docarmolawson}
	\diver \left( \frac{D u}{\sqrt{1+|D u|^2}}\right) = \frac{f(u)}{\sqrt{1+|D u|^2}},
	\end{equation}
	where
	\[
	f(u) = m \frac{\lambda_t(u)}{\lambda(u)} = m h'(v),
	\]
	while $u$ and $v$ are related by \eqref{eq_uv}. Under assumption $(i)$, $I = (t_*,t^*)$ is a bounded interval and $f(t) \ge C$ for $t$ outside of a compact set of $I$, with $C>0$ a constant. The conclusion $m^{-1} h'(v^*) = f(u^*)\le 0$ follows by applying Theorem \ref{teo_main_2_wmp} (if $u$ is non-constant), with the choice $\mu = 0$, $\chi=1$, or by a direct check if $u$ is constant. Inequality $h'(v_*) \ge 0$ follows along the same lines, and thus $h'(v)\equiv 0$ by the convexity of $h$. 
	
	In case $(ii)$, then $t^* < \infty$ and hence $u$ is bounded from above. Moreover, $f > 0$ on $(t_*,t^*)$, thus \eqref{eq_docarmolawson} admits no constant solutions. The absence of non-constant solutions $u$ follows from $f>0$ by applying again Theorem \ref{teo_main_2_wmp}.
\end{proof}

Concerning graphs with non-constant mean curvature, $\wmp$ seems to be not sufficient to conclude rigidity for the more involved equation \eqref{prescribed_geodesic}, and we use the full strength of $\smp$ to obtain the following Theorem. For simplicity, we state the result for the warping function $h(s) = \cosh s$, but the proof can easily be extended to each $h \in C^2(\R)$ satisfying
\[
\left\{ \begin{array}{l}
h \ \ \text{ even},\\[0.2cm]
h^{-1} \in L^1(-\infty) \cap L^1(+\infty), \\[0.2cm]
(h'/h)' >0 \qquad \text{on } \, \R.
\end{array}\right.
\]

\begin{theorem}[\cite{bmpr}, Thm. 8.11]\label{teo_presc_nonconst}
	Let $M$ be complete and let $\bar M = \R \times_{\cosh s} M$. Assume that the Ricci tensor of $M$ satisfies
	$$
	\Ricc(\nabla r, \nabla r) \ge -(m-1) \kappa^2(1+r)^2 \qquad \text{on } \, M \backslash \cut(o),
	$$
	for some constant $\kappa > 0$. Fix a constant $H_0 \in (-1,1)$, and consider an entire geodesic graph of $u : M \ra \R$ with prescribed curvature $H(x) \ge -H_0$ in the upward direction. Then, $u$ is bounded from above and satisfies
	\begin{equation}\label{bound_graph}
	u^* \le \mathrm{arctanh}(H_0).
	\end{equation}
	In particular,
	\begin{itemize}
		\item[(i)] there is no entire graph with prescribed mean curvature satisfying $|H(x)| \ge 1$ on $M$;
		\item[(ii)] the only entire graph with constant mean curvature $H_0 \in (-1,1)$ in the upward direction is the totally umbilic slice $\{s = \mathrm{arctanh}(H_0)\}$.
	\end{itemize}
\end{theorem}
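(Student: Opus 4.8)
The plan is to reduce the prescribed-curvature equation \eqref{prescribed_geodesic} for the warping function $h(s) = \cosh s$ to an inequality of the form \eqref{eq_nostraineq} to which Theorem \ref{teo_SMP_intro} applies, and then to run the argument described in Section \ref{subsec_WSMP}: apply the $\smp$ to $u$ to get an upper bound, and conclude the rigidity in cases $(i)$, $(ii)$ by a direct analysis of the resulting boundary values. First I would make \eqref{prescribed_geodesic} explicit in this setting. With $h(s) = \cosh s$, the flow parameter is $t = \int_0^s \mathrm{sech}\,\sigma\, \di\sigma = \mathrm{arctanh}(\tanh(s/2))\cdot(\ldots)$, but what matters is only that $\lambda(t) = h(s(t)) = \cosh(s(t))$ and $\lambda_t/\lambda = h'(s(t)) = \sinh(s(t)) = h(s(t))\tanh(s(t))$, so that $\lambda_t(u)/\lambda(u) = \cosh(s(u))\tanh(s(u))$ while $\lambda(u) = \cosh(s(u))$; hence \eqref{prescribed_geodesic} becomes
\[
\diver\left(\frac{Du}{\sqrt{1+|Du|^2}}\right) = m\cosh(s(u))\left[ H(x) + \frac{\tanh(s(u))}{\sqrt{1+|Du|^2}}\right].
\]
Using $H(x) \ge -H_0$ and the monotonicity $(h'/h)' = (\tanh)' > 0$, on the set $\{ s(u) > \mathrm{arctanh}(H_0)\}$ — equivalently $\{\tanh(s(u)) > H_0\}$ — the bracket is bounded below by $(\tanh(s(u)) - H_0)/\sqrt{1+|Du|^2} > 0$, so that with $v = s(u)$ and $f(v) = m\cosh(v)(\tanh(v) - H_0)$ we obtain
\[
\diver\left(\frac{Du}{\sqrt{1+|Du|^2}}\right) \ge \frac{f(v)}{\sqrt{1+|Du|^2}} \qquad\text{on }\{v > \mathrm{arctanh}(H_0)\},
\]
which is \eqref{eq_SMP_vera} with $\mu = 0$, $\chi = 1$, $\gamma$ corresponding to the level $\mathrm{arctanh}(H_0)$, and $K$ any positive lower bound for $f$ there. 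Since $\alpha = 2$ in the hypothesis $\Ricc(\nabla r,\nabla r) \ge -(m-1)\kappa^2(1+r)^2$, the condition $\mu \le \chi - \alpha/2 = 1 - 1 = 0$ is met with equality, so Theorem \ref{teo_SMP_intro} gives that the $\smp$ holds.

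Next I would deduce the bound \eqref{bound_graph}. Here one must be careful about a subtlety: the $\smp$ as stated presumes $u$ bounded from above, so strictly one should first argue boundedness. The way I would do this is by contradiction / truncation: suppose $v^* := \sup_M v > \mathrm{arctanh}(H_0)$ (allowing $+\infty$). Fix any $\gamma$ with $\mathrm{arctanh}(H_0) < \gamma < v^*$; then $f(v) \ge c > 0$ on $\{v > \gamma\}$, and one applies the $\smp$ (in the localized formulation on $\{u > \gamma, |Du| < \eps\}$, which is the content of Definition \cite{bmpr} and does not itself require global boundedness once one knows such a super-level set is nonempty, which holds by completeness and Ekeland's principle as noted in the Remark after Theorem \ref{teo_SMP_intro}) to conclude $K \le 0$, contradicting $K = c > 0$. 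This forces $v^* \le \mathrm{arctanh}(H_0)$, i.e. $v = s(u)$ is bounded above and \eqref{bound_graph} holds. Case $(i)$ is then immediate: if $|H(x)| \ge 1$ everywhere then in particular $H(x) \ge -H_0$ fails for every $H_0 < 1$ unless $H \le -1$ identically, in which case one applies the same reasoning to the downward normal — equivalently to $\bar u = -u$, which by the symmetry of the equation under $s \mapsto -s$ (valid since $\cosh$ is even, so $f$ transforms to $\bar f(t) = -f(-t)$) satisfies an analogous inequality — to get $-v^* \ge \mathrm{arctanh}(H_0)$ for all $H_0 < 1$, hence a contradiction; so $|H| \ge 1$ is impossible on all of $M$.

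For case $(ii)$, with $H(x) \equiv H_0 \in (-1,1)$ constant, I would apply the two-sided argument: \eqref{bound_graph} gives $v^* \le \mathrm{arctanh}(H_0)$, and applying the same construction to $\bar u = -u$, whose graph has constant curvature $-H_0$ in the downward direction — wait, more carefully: the graph $\{s = -v(x)\}$ has constant mean curvature $-H_0$ in its upward normal by the $s \mapsto -s$ symmetry, so by \eqref{bound_graph} applied with $H_0$ replaced by $-H_0$ we get $\sup_M(-v) \le \mathrm{arctanh}(-H_0) = -\mathrm{arctanh}(H_0)$, i.e. $v_* := \inf_M v \ge \mathrm{arctanh}(H_0)$. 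Combined, $v \equiv \mathrm{arctanh}(H_0)$, so $u$ is constant and $\Sigma$ is the slice $\{s = \mathrm{arctanh}(H_0)\}$; one checks directly from \eqref{prescribed_geodesic} that this slice indeed has constant mean curvature $\tanh(\mathrm{arctanh}(H_0)) = H_0$, as expected (it is the totally umbilic hypersphere at that height). The main obstacle I anticipate is the bookkeeping around the localized $\smp$ versus the a priori boundedness of $u$: one needs the precise statement that the $\smp$, as defined via super-level sets, applies even before global boundedness is known, using that $\{u > \gamma, |Du| < \eps\}$ is nonempty for every $\gamma$ below $u^*$ (or for every $\gamma$ if $u$ is unbounded above). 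The rest is a routine — if slightly delicate — computation with $h = \cosh$ and careful tracking of the monotonicity of $h'/h$, which is exactly the structural hypothesis isolated in the displayed system preceding the theorem and which guarantees that the super-level set of $v$ above $\mathrm{arctanh}(H_0)$ is precisely where $f > 0$.
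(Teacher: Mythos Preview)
Your overall strategy matches the paper's, but there is a genuine algebraic gap in the key estimate. You claim that on $\{\tanh v > H_0\}$ the bracket
\[
-H_0 + \frac{\tanh v}{\sqrt{1+|Du|^2}}
\]
is bounded below by $(\tanh v - H_0)/\sqrt{1+|Du|^2}$. Rearranging, this would require $H_0\big(1/\sqrt{1+|Du|^2}-1\big)\ge 0$, which fails whenever $H_0>0$ and $Du\neq 0$. So your displayed inequality $\diver(\ldots)\ge f(v)/\sqrt{1+|Du|^2}$ on the whole super-level set is false in general, and this is precisely the obstruction that forces the use of the \emph{strong} maximum principle rather than the weak one. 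The paper handles this by writing
\[
-H_0+\frac{\tanh v}{\sqrt{1+|Du|^2}}=\frac{1}{\sqrt{1+|Du|^2}}\Big[\tanh v - H_0\sqrt{1+|Du|^2}\Big],
\]
restricting to $\{\tanh v>H_0+\gamma\}\cap\{|Du|<\eps\}$, and observing that there $\tanh v - H_0\sqrt{1+|Du|^2}>\gamma - |H_0|(\sqrt{1+\eps^2}-1)\ge \gamma/2$ for $\eps$ small depending on $\gamma,H_0$. You do invoke the localized form $\{u>\gamma,\ |Du|<\eps\}$, but only to argue boundedness; you never feed the constraint $|Du|<\eps$ back into the estimate where it is actually needed.

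A secondary point: your concern about a priori boundedness of $u$ is misplaced. Since $h(s)=\cosh s$ has $h^{-1}\in L^1(-\infty)\cap L^1(+\infty)$, the interval $I=t(\R)$ is bounded (in fact $I=(-\pi/2,\pi/2)$), so $u=t(v):M\to I$ is automatically bounded. The paper simply records ``since $u$ is bounded above'' and applies Theorem~\ref{teo_SMP_intro}; no Ekeland-type argument is needed. Your treatment of (i) is also somewhat tangled: the clean way (as in the paper) is to note that $|H|\ge 1$ and continuity force $H\ge 1$ or $H\le -1$ identically, reduce to $H\ge 1$ by the reflection $v\mapsto -v$, then apply the main bound with every $H_0>-1$ and let $H_0\to -1$.
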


\begin{remark}
	\emph{In particular, the choice $M = \HH^m$ enables us to recover the statement of Do Carmo-Lawson's result \cite{docarmolawson} for CMC graphs over hyperspheres.
	}
\end{remark}

\begin{proof}[Proof: sketch]
	Defining $t, \lambda(t)$ and $u(x)$ as in \eqref{bonitinho} and \eqref{eq_uv} with $h(s) = \cosh s$, the corresponding  function $u : M \ra \left(-\frac{\pi}{2},\frac{\pi}{2}\right)$ satisfies
	\begin{equation}
	\begin{array}{lcl}
	\disp \diver\left( \frac{D u}{\sqrt{1+|D u|^2}} \right) & = & \disp m \cosh v \left[ H(x) + \frac{\tanh v}{\sqrt{1+|D u|^2}}\right] \\[0.5cm]
	& \ge & \disp m \cosh v \left[- H_0 + \frac{\tanh v}{\sqrt{1+|D u|^2}}\right].
	\end{array}
	\end{equation}
	If, by contradiction, the following upper level set of $v$ (hence, of $u$) is non-empty for some $\gamma\in \R^+$:
	$$
	\Omega_\gamma =\{ \tanh v > H_0 + \gamma\},
	$$
	then, for $\eps>0$ is small enough, on the (non-empty) set
	\[
	\Omega_{\eta,\eps} = \Omega_\gamma \cap \{ |D u|< \eps\}
	\]
	it holds
	\[
	\begin{array}{lcl}
	\disp \disp \diver\left( \frac{D u}{\sqrt{1+|D u|^2}} \right) & \ge & \disp \frac{m \cosh v}{\sqrt{1+|D u|^2}} \left[\gamma - H_0(\sqrt{1+|D u|^2} -1)\right] \\[0.5cm]
	& \ge & \disp \frac{m \gamma}{2\sqrt{1+|D u|^2}}.
	\end{array}
	\]
	Since $u$ is bounded above, applying Theorem \ref{teo_SMP_intro} with $\alpha = 1$, $\mu = 0$, $\chi = 1$ we reach the desired contradiction.\par
	To prove $(i)$, suppose that $|H(x)| \ge 1$ on $M$. Since $\cosh s$ is even, the graph of $-v$ has curvature $-H(x)$ in the upward direction. Thus, up to replacing $v$ with $-v$ we can suppose that $H(x) \ge 1$. Applying the first part of the theorem to any $H_0 > -1$ we obtain $v^* \le \mathrm{arctanh}(H_0)$, and the non-existence of $v$ follows by letting $H_0 \ra -1$.\par
	To prove $(ii)$, let $H(x) = -H_0 \in (-1,1)$ be the mean curvature of the graph of $v$ in the upward direction. Then, Theorem \ref{teo_presc_nonconst} gives $\tanh v^* \le H_0$. On the other hand, the graph of $-v$ has mean curvature $H(x) = H_0$ in the upward direction, and applying again Theorem \ref{teo_presc_nonconst} we deduce $\tanh [(-v)^*] \le -H_0$, that is, $\tanh v_* \ge H_0$. Combining the two estimates gives $v \equiv \mathrm{arctanh}(H_0)$, as required.
\end{proof}

Our results also apply to Schwarzschild, ADS-Schwarzschild and Reissner-Nordstr\"om-Tangherlini spaces, considered in \cite{cmr}, that we now briefly recall. Having fixed a mass parameter $\mathfrak{m}>0$ and a compact Einstein manifold $(M,\sigma)$ with $\Ricc = (m-1)\sigma$, the Schwarzschild space is the product
\begin{equation}\label{schwarz}
\bar{M}^{m+1} = (r_0(\mathfrak{m}), \infty)\times M \qquad \text{with metric} \qquad \bar g = \frac{\di r^2}{V(r)} + r^2 \sigma 
\end{equation}
where  
\begin{equation}\label{def_Vr_schwarz}
V(r) = 1-2 \mathfrak{m} r^{1-m}
\end{equation}
and $r_0(\mathfrak{m})$ is the unique positive root of $V(r)=0$; $\bar M$ generates a Lorentzian manifold $\RR \times \bar{M}$ with metric $-V(r) \di t^2 + \metric_{\overline{M}}$ that solves the Einstein field equation in vacuum with zero cosmological constant. Similarly, given $\bar \kappa \in \{1,0,-1\}$ the ADS-Schwarzschild space (with, respectively, spherical, flat or hyperbolic topology according to whether $\bar \kappa = 1,0,-1$) is the manifold \eqref{schwarz} with respectively
\begin{equation}\label{def_Vr_ADS}
V(r) = \bar \kappa + r^2 - 2 \mathfrak{m} r^{1-m}, \qquad \Ricc = (m-1)\bar \kappa \sigma, 
\end{equation}
and $r_0(\mathfrak{m})$, as before, the unique positive root of $V(r)=0$. They generate (static) solutions of the vacuum Einstein field equation with negative cosmological constant, normalized to be $-m(m+1)/2$. The change of variables
\begin{equation}\label{def_tr_sch}
s = \int_{r_0(\mathfrak{m})}^r \frac{\di \sigma}{\sqrt{V(\sigma)}}, \qquad h(s) = r(s)
\end{equation}
allows to write\footnote{In this respect, note that $r_0(\mathfrak{m})$ is a simple solution of $V(r)=0$, so the integral defining $s$ is convergent, and that $1/\sqrt{V(r)} \not \in L^1(\infty)$ so $s$ is defined on the entire $\R^+$.} $\bar M = \R^+ \times_h M$. Similarly, the Reissner-Nordstr\"om-Tangherlini space is a  charged black-hole solution of Einstein equation, described by \eqref{schwarz} with the choice
$$
V(r) = 1-2 \mathfrak{m} r^{1-m} + \mathfrak{q}^2 r^{2-2m}, \qquad M = \Sph^{m},
$$
with $\mathfrak{q} \in [-\mathfrak{m},\mathfrak{m}]$ being the charge. We consider solitons with respect to the conformal field 
\[
h(s) \partial_s = r \sqrt{V(r)} \partial_r.
\]	
A direct application of Theorem \ref{teo_main_2_wmp} shows the following:

\begin{theorem}[Thm. E in \cite{cmr}]
	There exists no entire graph in the Schwarzschild and ADS-Schwarz\-schild space (with spherical, flat or hyperbolic topology), over a complete $M$, that is a soliton with respect to the field $r \sqrt{V(r)} \partial_r$.
\end{theorem}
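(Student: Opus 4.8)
The plan is to read off, from the soliton condition, the scalar equation solved by the reparametrised height function, and then apply Theorem~\ref{teo_main_2_wmp} with $\mu=0$, $\chi=1$, exactly as in the proof sketch of Theorem~\ref{teo_bern_minimal_intro}. First I would use the change of variables \eqref{def_tr_sch} to present the ambient space as the warped product $\bar M=\R^+\times_h M$ with $h(s)=r(s)$, so that $h'(s)=\frac{dr}{ds}=\sqrt{V(r(s))}$; together with \eqref{bonitinho}--\eqref{eq_uv} this gives $\lambda_t(u)/\lambda(u)=h'(v)$ and $\lambda(u)^{2}=h(v)^{2}$. Since the prescribed field is precisely $Y=r\sqrt{V(r)}\,\partial_r=h(s)\,\partial_s$, i.e.\ the $+$ choice in $Y=\pm h(s)\partial_s$, equation \eqref{eq_soliton} says that an entire soliton graph $\{s=v(x)\}$ corresponds, via $u=t(v)$, to an entire solution $u\colon M\to I$ of
\[
\diver\!\left(\frac{Du}{\sqrt{1+|Du|^{2}}}\right)=\frac{m\,h'(v)+h(v)^{2}}{\sqrt{1+|Du|^{2}}}=\frac{m\sqrt{V(r)}+r^{2}}{\sqrt{1+|Du|^{2}}},\qquad r:=h(v)\in\bigl(r_{0}(\mathfrak m),\infty\bigr).
\]
Because $V>0$ on $(r_{0},\infty)$ and $r>r_{0}$, the numerator exceeds the positive constant $r_{0}(\mathfrak m)^{2}$ at every point of $M$, so $u$ solves an inequality of the form \eqref{eq_nostraineq} with $\mu=0$, $\chi=1$ and $f\equiv r_{0}(\mathfrak m)^{2}>0$.

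Next I would verify the hypotheses of Theorem~\ref{teo_main_2_wmp}. For $\mu=0$, $\chi=1$ the required volume growth is $\liminf_{\rho\to\infty}\log|B_{\rho}|/\rho^{2}<\infty$; since in these constructions the fibre $M$ is Einstein, hence has Ricci curvature bounded below, Bishop--Gromov forces at most exponential volume growth and therefore $\log|B_{\rho}|/\rho^{2}\to0$. One also needs $u$ bounded from above: for the Schwarzschild space $\Ricc_M=(m-1)\sigma$ is positive definite, so $M$ is compact by Bonnet--Myers; for all three ADS-Schwarzschild topologies $V(r)\sim r^{2}$ as $r\to\infty$, so by \eqref{bonitinho} the reparametrisation $t$ maps $\R^+$ onto a bounded interval $I$, whence $u<\sup I<\infty$. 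With these checks in place the conclusion is immediate: if $u$ is non-constant, Theorem~\ref{teo_main_2_wmp} forces $f(u^{*})=r_{0}(\mathfrak m)^{2}\le0$, a contradiction; if $u$ is constant, say $u\equiv u_{0}$, substituting into the displayed equation gives $0=m\sqrt{V(\rho_{0})}+\rho_{0}^{2}>0$ with $\rho_{0}=h\bigl(t^{-1}(u_{0})\bigr)>r_{0}$, again a contradiction. Hence no such $u$, and therefore no such soliton graph, can exist over a complete $M$, in any of the three topologies.

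The main point to get right — everything else being routine bookkeeping — is the sign in \eqref{eq_soliton} associated with the field $Y=r\sqrt{V(r)}\,\partial_r$, together with the observation that it is the \emph{favourable} one: the soliton contributes the term $+\lambda^{2}(u)=+r^{2}$, which keeps the right-hand side uniformly bounded below by a positive constant, and this is exactly what makes the weak maximum principle bite. For the opposite field $-r\sqrt{V(r)}\,\partial_r$ one picks up $-r^{2}$ instead, and since $\sqrt{V(r)}<1$ in the Schwarzschild case the right-hand side can then become strongly negative, so the method collapses — in line with the fact that solitons along that direction need not be rigid. A secondary, routine matter is the short case analysis, according to the topology of the ADS-Schwarzschild family, that simultaneously secures $u$ bounded above and the volume-growth hypothesis of Theorem~\ref{teo_main_2_wmp}.
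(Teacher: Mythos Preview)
Your proof is correct and follows essentially the same route as the paper's: derive \eqref{eq_soliton} with the $+$ sign, observe that the numerator $m h'(v)+h(v)^2$ is strictly positive, secure $u^*<\infty$ (via compactness of $M$ in the Schwarzschild/spherical cases, via $V(r)\sim r^2$ hence $h(s)\sim e^s$ in the remaining ADS cases), and conclude by the weak maximum principle of Theorem~\ref{teo_main_2_wmp} using the Einstein condition and Bishop--Gromov for the volume growth. The only cosmetic difference is that the paper dispatches the compact-$M$ cases by the classical interior maximum principle rather than invoking Theorem~\ref{teo_main_2_wmp}, and records the positivity of $f$ directly rather than via your explicit lower bound $r_0(\mathfrak m)^2$.
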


\begin{proof}
	If $\Sigma$ is the graph of $\{ s = v(x)\}$ for $v : M \ra \R^+$, by \eqref{eq_soliton} the function $u = t(v)$ satisfies
	\begin{equation}\label{eq_schw}
	\diver \left( \frac{Du}{\sqrt{1+|Du|^2}} \right) = \frac{m h'(v) + h^2(v)}{\sqrt{1+|Du|^2}}.
	\end{equation}
	Differentiating,
	$$
	h'(s) = \frac{\di r}{\di s} = \sqrt{V(r(s))} >0, \qquad h''(s) = \frac 12 \frac{\di V}{\di r}(r(s)) > 0,
	$$
	so that $f(t) = m h'(s(t)) + h^2(s(t))>0$ on $\R^+$. If $M$ is compact, this is sufficient to guarantee that \eqref{eq_schw} does not admit solutions, by the classical maximum principle, and settles the case of Schwarzschild and ADS-Schwarzschild spaces with spherical topology. In the remaining cases, observe that $V(r) \sim r$ for large $r$, thus in view of \eqref{def_tr_sch}
	$$
	h(s) = r(s) \sim e^s \qquad \text{as } \, s \ra \infty.
	$$
	As a consequence, the flow parameter $t(s)$, hence $u$, is bounded from above. The Einstein condition and Bishop-Gromov comparison theorem imply
	$$
	\limsup_{r \ra \infty} \frac{\log |B_r|}{r} < \infty.
	$$
	To conclude, we apply Theorem \ref{teo_main_2_wmp} to deduce $f(u^*) \le 0$  and to reach the required contradiction.
\end{proof}

\subsection{Maximum principles at infinity for solutions with controlled growth}

We next examine the case of solutions with a controlled growth at infinity, to which the second part of \cite[Thm. 7.5]{bmpr} applies: 

\begin{theorem}[Thm. 7.5 in \cite{bmpr}]\label{teo_main_2}
	Let $M$ be a complete Riemannian manifold, fix
	\begin{equation}\label{pararange_2}
	\chi \in [0,1], \qquad \mu \le \chi+1,
	\end{equation}
	and a function $f \in C(\R)$ satisfying
	\[
	f(t) \ge C > 0 \quad \text{for } \, t>>1,
	\]
	for some constant $C>0$. Let $u \in \lip_\loc(M)$ be a non-constant solution of
	\begin{equation}\label{eq_u_WMP}
	\diver \left( \frac{Du}{\sqrt{1+|Du|^2}}\right) \ge \frac{f(u)}{(1+r)^\mu} \frac{|Du|^{1-\chi}}{\sqrt{1+|Du|^2}} \qquad \text{on } \, M
	\end{equation}
	such that
	\begin{equation}\label{opequeno}
	u_+(x) = o\big(r(x)^\sigma\big) \qquad \text{as } \, r(x) \ra \infty,
	\end{equation}
	for some $\sigma>0$ satisfying
	\begin{equation}\label{ipo_sigma_teomain2}
	\chi \sigma \le \chi+1-\mu.
	\end{equation}
	If one of the following properties hold:
	\begin{equation}\label{volgrowth_sigmamagzero}
	\hspace{-0.3cm}\begin{array}{lll}
	(i) & \chi \sigma < \chi+1-\mu, & \disp \liminf_{r \ra \infty} \frac{\log|B_r|}{r^{\chi+1-\mu -\chi\sigma}} < \infty \quad (=0 \, \text{ if } \, \chi=0);\\[0.4cm]
	(ii) & \chi \sigma = \chi+1-\mu, \ \ \ \chi>0, & \disp \liminf_{r \ra \infty} \frac{\log|B_r|}{\log r} < \infty; \\[0.4cm]
	(iii) & \chi=0, \ \mu = 1, &  \disp  \liminf_{r \ra \infty} \frac{\log|B_r|}{\log r} \le \left\{ \begin{array}{ll}
	2-\sigma & \text{if } \, \sigma \le 1, \\[0.1cm]
	\bar p - \sigma(\bar p -1) \\
	\text{for some $\bar p >1$} & \text{if } \, \sigma >1,
	\end{array}\right.
	\end{array}
	\end{equation}
	then, $u$ is bounded above on $M$ and $f(u^*) \le 0$.
\end{theorem}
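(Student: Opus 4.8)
The plan is to argue by contradiction, reducing the statement to a volume test of the kind used for parabolicity and stochastic completeness (see \cite{prsmemoirs}), adapted to the degenerate structure of the mean curvature operator. Suppose the conclusion fails: then either $u^* = +\infty$, or $u^* < +\infty$ but $f(u^*) > 0$. Since $f$ is continuous with $f(t) \ge C > 0$ for $t \gg 1$, in both cases I can fix $\gamma < u^*$ and $C_1 > 0$ with $f(u) \ge C_1$ on the nonempty set $\{u > \gamma\}$, and put $v := (u-\gamma)_+$. Then $v \in \lip_\loc(M)$ is non-negative, non-constant (a positive constant value would force $u$ constant, while $v \equiv 0$ contradicts $\gamma < u^*$), it still obeys $v(x) = o(r(x)^\sigma)$ by \eqref{opequeno}, and — applying the Pasting Lemma (Kato's inequality) to \eqref{eq_u_WMP}, whose right-hand side is non-negative on $\{u > \gamma\}$ — it is a non-negative weak subsolution of the mean curvature operator on $M$ which, on $\{v > 0\}$, satisfies
\[
\diver\left( \frac{Dv}{\sqrt{1+|Dv|^2}} \right) \;\ge\; \frac{C_1}{(1+r)^\mu}\,\frac{|Dv|^{1-\chi}}{\sqrt{1+|Dv|^2}}.
\]
Everything reduces to showing that no such $v$ can exist under \eqref{volgrowth_sigmamagzero}.

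Next I would run a Caccioppoli/capacity estimate. Fix an increasing $\Phi \in C^1([0,\infty))$ with $\Phi(0) = 0$ and $\Phi' > 0$ on $(0,\infty)$ — a power $\Phi(t) = t^q$ in the non-borderline regimes, a power times a logarithm in the borderline ones — an exponent $p \ge 2$, and, for $R < \rho$, a cutoff $\psi$ with $0 \le \psi \le 1$, $\psi \equiv 1$ on $B_R$, $\psi \equiv 0$ outside $B_\rho$, $|D\psi| \le (\rho - R)^{-1}$, the gap $\rho - R$ to be optimised later. Testing the displayed inequality against $\varphi := \Phi(v)\,\psi^p \ge 0$, integrating by parts (the term carrying $\Phi'(v) \ge 0$ has the right sign and is kept), and — the one feature specific to the mean curvature operator — invoking the universal bound $\big| Dv/\sqrt{1+|Dv|^2}\big| < 1$, one reaches an inequality of the form
\[
\int_M \Phi'(v)\,\psi^p\,\frac{|Dv|^2}{\sqrt{1+|Dv|^2}} \;+\; \int_M \frac{C_1\,\Phi(v)\,\psi^p}{(1+r)^\mu}\,\frac{|Dv|^{1-\chi}}{\sqrt{1+|Dv|^2}} \;\le\; \frac{p}{\rho-R}\int_{B_\rho\setminus B_R}\Phi(v)\,\psi^{p-1}.
\]
To exploit this I would split $M$ into $\{|Dv| \le 1\}$ and $\{|Dv| > 1\}$: on the first, $(1+|Dv|^2)^{-1/2}$ is comparable to $1$ and the two left-hand densities behave like $\Phi'(v)|Dv|^2$ and $\Phi(v)(1+r)^{-\mu}|Dv|^{1-\chi}$; on the second, like $\Phi'(v)|Dv|$ and $\Phi(v)(1+r)^{-\mu}|Dv|^{-\chi}$. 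A weighted Young inequality then absorbs part of the right-hand side into the left in each region, and what is left over is an integral over $B_\rho\setminus B_R$ no longer containing $Dv$; at this stage \eqref{opequeno} enters, bounding $v$ on the annulus by $\omega(R)\,\rho^\sigma$ with $\omega(R) \to 0$ as $R \to \infty$.

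The outcome is a recursion comparing a fixed positive functional of $v$ over $B_R$ and over $B_\rho$, whose multiplicative constant — after interpolating the two left-hand terms against the cutoff factor $(\rho-R)^{-1}$, the weight $(1+r)^{-\mu}$ and the growth factor $\rho^\sigma$ — scales like a negative power of the radius, namely $-(\chi+1-\mu-\chi\sigma)$ in case $(i)$, and a logarithmic substitute in the borderline cases $(ii)$ and $(iii)$ (the free exponent $\bar p$ in $(iii)$ arising precisely from this optimisation when $\chi = 0$). Taking $R = R_k \to \infty$ along a sequence realising the relevant $\liminf$ in \eqref{volgrowth_sigmamagzero}, optimising $\rho_k - R_k$ at each step and telescoping the recursion — exactly as in the classical volume tests, here in weighted nonlinear form — forces $\int_{B_R}\Phi'(v)\,|Dv|^2(1+|Dv|^2)^{-1/2}$ to vanish for every $R$, hence $Dv \equiv 0$ a.e. on $M$; since $M$ is connected, $v$ is constant, contradicting that $v$ is non-constant. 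Therefore $u^* < +\infty$ and $f(u^*) \le 0$.

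I expect the main obstacle to be the quantitative heart of the last two steps — the Young absorption together with the radii iteration — which must be sharp \emph{simultaneously} in the small-gradient regime, where the operator is essentially the Laplace--Beltrami one and $\chi,\mu$ enter through a Hardy-type weight, and in the large-gradient regime, where $\big| Dv/\sqrt{1+|Dv|^2}\big|$ saturates at $1$ and the equation fully degenerates; matching the two is what pins down both the constraint \eqref{ipo_sigma_teomain2} and the precise trichotomy \eqref{volgrowth_sigmamagzero}, and the borderline cases are what force the logarithmically corrected $\Phi$ and, for $\chi = 0$, the two-parameter family carrying the free exponent $\bar p$. A subordinate but genuine point is to confine the use of \eqref{opequeno} to the scaling $\omega(R)\rho^\sigma$, so that nothing stronger than $u_+ = o(r^\sigma)$ is exploited.
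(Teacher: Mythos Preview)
The paper does not supply a proof of this theorem: it is a survey, and the statement is quoted from the monograph \cite{bmpr} (see the explicit disclaimer at the end of the introduction to Section~2, and Remark~\ref{rem_localization}, which only says that the result follows from \cite[Thm.~7.15]{bmpr}). So there is nothing in the paper itself to compare your argument against, beyond the remarks indicating the general framework.

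That said, your outline is the right one and matches the methodology of \cite{bmpr}: reduction via the Pasting Lemma to a nonnegative subsolution $v=(u-\gamma)_+$ with $f$ replaced by a positive constant, testing with $\Phi(v)\psi^p$, exploiting the mean-curvature-specific bound $|Dv|/\sqrt{1+|Dv|^2}<1$ to drop the gradient from the boundary term, splitting according to the size of $|Dv|$, and then iterating a Caccioppoli-type inequality along a sequence of radii realising the volume $\liminf$. You also correctly anticipate that the borderline cases \eqref{volgrowth_sigmamagzero}(ii)--(iii) require a logarithmic refinement of $\Phi$ and that the free exponent $\bar p$ in (iii) emerges from an optimisation when $\chi=0$.

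Where your write-up stops short of a proof is exactly where you say it does: the ``quantitative heart'' --- the precise Young/H\"older balance that simultaneously handles the small- and large-gradient regimes and produces the sharp exponents in \eqref{ipo_sigma_teomain2} and \eqref{volgrowth_sigmamagzero} --- is asserted rather than carried out. This step is genuinely delicate (it is the substance of several pages in \cite{bmpr}), and in particular the iteration you describe is not merely a telescoping of a single recursive inequality: one needs a carefully chosen sequence of scales and, in cases (ii)--(iii), a different functional than $\int \Phi'(v)|Dv|^2(1+|Dv|^2)^{-1/2}$ to force the contradiction. So your proposal is a correct and well-informed strategy sketch, but it is not yet a proof; the paper, for its part, does not claim to give one either.
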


\begin{remark}[\textbf{Localization to upper level sets}]\label{rem_localization}
	\emph{Theorem \ref{teo_main_2} follows from \cite[Thm.7.15]{bmpr}, where we considered solutions of \eqref{eq_WMP_vera} under the weaker condition
		\[
		\hat u : = \limsup_{r(x) \ra \infty} \frac{u_+(x)}{r(x)^\sigma} < \infty,
		\]
		for some $\sigma \ge 0$. The goal is an upper bound for $K$ of the form
		\begin{equation}\label{eq_cuchi}
		K \le c \hat u^\chi,
		\end{equation}
		for some explicit constant $c\ge 0$ that vanishes in various cases of interest. Note that, for instance, \eqref{eq_cuchi} readily imply $K \le 0$ whenever \eqref{opequeno} and $\chi>0$ hold.
	}
\end{remark}

\begin{remark}[\textbf{The limiting case $\chi = 0$}]\label{rem_chizero}
	\emph{This corresponds to the most ``singular" case, and sometimes it requires an ad-hoc treatment. Conditions \eqref{ipo_sigma_teomain2} and \eqref{volgrowth_sigmamagzero} rephrase for $\chi = 0$ in the following equivalent form:
		\begin{equation}\label{volgrowth_sigmazero_strano}
		\begin{array}{rlll}
		(i) & \quad \mu<1, & \disp \quad \liminf_{r \ra \infty} \frac{\log |B_r|}{r^{1-\mu}} = 0, & \text{ and } \, \sigma>0, \ \text{ or}  \\[0.4cm]
		(iii) & \quad \mu=1, & \disp \quad \liminf_{r \ra \infty} \frac{\log |B_r|}{\log r} = d_0, & \text{ and } \, 0 < \sigma \le \left\{ \begin{array}{ll}
		2-d_0 & \text{if } \, d_0 \ge 1, \\[0.3cm]
		\dfrac{\bar p-d_0}{\bar p-1} & \text{if } \, d_0 < 1,
		\end{array}\right.
		\end{array}
		\end{equation}
		for some $\bar p > 1$. Furthermore, because of \cite[Rem. 7.6]{bmpr}, the conclusion of the theorem holds under the weaker assumption
		\[
		u_+(x) = O\big(r(x)^\sigma\big) \quad \text{as } \, r(x) \ra \infty.
		\]
		We underline that $(i)$ guarantees the absence of solutions with polynomial growth to
		\[
		\diver \left( \frac{Du}{\sqrt{1+|Du|^2}}\right) \gtrsim \frac{1}{(1+r)^\mu} \frac{|Du|}{\sqrt{1+|Du|^2}} \quad \text{on } \, M
		\]
		whenever 	
		\[
		\mu<1, \quad \liminf_{r \ra \infty} \frac{\log|B_r|}{r^{1-\mu}} = 0.
		\]
	}
\end{remark}

\begin{remark}[\textbf{Sharpness}]\label{rem_sharpness}
	\emph{In Subsection 7.4 of \cite{bmpr}, we show that the radially symmetric model $M_g$ described in Subsection \ref{subsec_WSMP} admits an unbounded, radial solution $u \in \lip_\loc(M)$, increasing as a function of the distance $r$ from the origin and solving
		\begin{equation}\label{aim}
		\diver \left( \frac{Du}{\sqrt{1+|Du|^2}} \right) \gtrsim \frac{1}{(1+r)^{\mu}} \frac{|Du|^{1-\chi}}{\sqrt{1+|Du|^2}} \qquad \text{on } \, M,
		\end{equation}
		that coincides with $r^\sigma$ for $r \ge 2$ whenever $\chi \in [0,1]$, $\mu \le \chi+1$ and one of the following conditions hold:
		\begin{equation}\label{ipo_volume_couter}
		\begin{array}{ll}
		1) & \quad \chi\sigma > \chi+1-\mu; \\[0.2cm]
		2) & \disp \quad \chi \sigma = \chi+1-\mu, \qquad \text{and} \quad \lim_{r \ra \infty} \frac{\log |B_r|}{\log r} = \infty; \\[0.4cm]
		3) & \quad \chi \sigma = \chi+1-\mu, \qquad \sigma \in (0,1] \qquad \text{and} \\[0.2cm]
		& \disp \lim_{r \ra \infty} \frac{\log|B_r|}{\log r} = d_0 \in (2 -\sigma, \infty); \\[0.3cm]
		4) & \disp \quad \chi \sigma < \chi+1-\mu, \qquad \text{and} \quad \lim_{r \ra \infty} \frac{\log|B_r|}{r^{\chi+1-\mu-\chi\sigma}} = d_0 \in (0, \infty),
		\end{array}
		\end{equation}
		while it coincides with $r(x)^\sigma/\log r$ for $r \ge 2$ when either
		\begin{equation}\label{ipo_volume_couter_2}
		\begin{array}{ll}
		5) & \quad \chi \sigma< \chi+1-\mu, \qquad \chi>0, \qquad \text{and} \\[0.2cm]
		& \disp \lim_{r \ra \infty} \frac{\log|B_r|}{r^{\chi+1-\mu-\chi\sigma}} = \infty, \qquad \text{or}\\[0.4cm]
		6) & \quad \chi \sigma< \chi+1-\mu, \qquad \chi = 0, \qquad \text{and} \\[0.2cm]
		& \disp \lim_{r \ra \infty} \frac{\log|B_r|}{r^{\chi+1-\mu-\chi\sigma}}  \in (0, \infty).
		\end{array}
		\end{equation}
		The existence of $u$ under any of $1),\ldots, 6)$ above shows the sharpness of the parameter ranges \eqref{pararange_2} and \eqref{ipo_sigma_teomain2}, and of the growth conditions \eqref{opequeno} for $u$ and \eqref{volgrowth_sigmamagzero} for $|B_r|$. In particular,
		\begin{itemize}
			\item[-] in $(2)$, all the assumptions are satisfied but the second or third in \eqref{volgrowth_sigmamagzero}, where the liminf is $\infty$, while in $(3)$, the liminf in the third in \eqref{volgrowth_sigmamagzero} is finite but bigger than the threshold $2-\sigma$ for $\sigma \le 1$;
			\item[-] in $(4)$ the requirements in the first of \eqref{volgrowth_sigmamagzero} are all met, but $u_+ = O(r^\sigma)$ instead of $u_+ = o(r^\sigma)$.
			\item[-] in $(5)$ and $(6)$, $u_+ = o(r^\sigma)$ but the requirements in the first of \eqref{volgrowth_sigmamagzero} barely fail.
		\end{itemize}
	}
\end{remark}

\subsection{Application: solitons for the mean curvature flow}

Our first result considers solitons for the MCF in $\R \times M$ that are realized as the graph of $u : M \ra \R$, and move in the direction of the parallel field $\partial_s$ (called graphical self-translators). In Euclidean space $\R^{m+1} = \R \times \R^m$, the bowl soliton (cf. \cite{altwu} and \cite[Lem. 2.2]{cluschsch}) and the non-rotational entire graphs in \cite{wang_annals} for $m \ge 3$, provide examples of entire, convex graphs that are self-translators and grow of order $r^2$ at infinity. In \cite[Thm. 2.28]{bmpr}, we prove that the growth $r^2$ is sharp for a much larger class of ambient spaces.

\begin{theorem}[\cite{bmpr}, Thm. 2.28]\label{teo_soliton_intro}
	Let $M$ be a complete manifold and consider the product $\bar M^{m+1} = \R \times M$. Fix $0 \le \sigma \le 2$ and suppose that either
	\begin{equation}\label{ipo_volume_soliton_intro}
	\begin{array}{ll}
	\sigma<2 & \quad \disp \text{and} \qquad \liminf_{r \ra \infty} \frac{\log |B_r|}{r^{2-\sigma}} < \infty, \qquad \text{or} \\[0.5cm]
	\sigma = 2 & \quad \disp \text{and} \qquad \liminf_{r \ra \infty} \frac{\log|B_r|}{\log r} < \infty.
	\end{array}
	\end{equation}
	Then, there exists no graphical self-translator $u : M \ra \R$ with respect to the vertical direction $\partial_s$ satisfying 
	\begin{equation}\label{crescitav_soliton_intro}
	\begin{array}{ll}
	\disp |u(x)| = o\big( r(x)^\sigma \big) \qquad \text{as } \, r(x) \ra \infty, & \quad \text{if } \, \sigma > 0; \\[0.2cm]
	\disp u^* < \infty, & \quad \text{if } \, \sigma = 0.
	\end{array}
	\end{equation}
\end{theorem}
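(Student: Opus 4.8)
The plan is to recognize the graphical self-translator equation as a special instance of the differential inequality \eqref{eq_nostraineq} and then invoke the maximum principles at infinity established above. Since $\bar M = \R \times M$ carries the product metric, a graphical self-translator $u : M \to \R$ with respect to $\partial_s$ satisfies \eqref{translator}, which we write as
\[
\diver\left(\frac{Du}{\sqrt{1+|Du|^2}}\right) \;=\; \frac{1}{\sqrt{1+|Du|^2}} \qquad \text{on } M
\]
(for a translator along $-\partial_s$ the sign on the right is reversed and the argument below is symmetric, provided one replaces $u^* < \infty$ by $u_* > -\infty$ when $\sigma = 0$). This is exactly \eqref{eq_nostraineq}, in fact with equality, for the parameter choice
\[
\chi = 1, \qquad \mu = 0, \qquad f \equiv 1 ,
\]
so that the structural hypothesis $f(t) \ge C > 0$ of Theorem \ref{teo_main_2} holds (with $C = 1$), and $\chi + 1 - \mu = 2$, whence \eqref{ipo_sigma_teomain2} reads $\sigma \le 2$, i.e.\ precisely the standing assumption $0 \le \sigma \le 2$. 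Moreover the right-hand side never vanishes, so $u$ cannot be constant and the non-constancy required by the relevant theorems is automatic.

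Next I would argue by contradiction, assuming such a $u$ exists, and split according to \eqref{crescitav_soliton_intro}. If $\sigma > 0$, then $u_+ \le |u| = o(r^\sigma)$, so \eqref{opequeno} holds; and after substituting $\chi = 1$, $\mu = 0$, the volume hypothesis \eqref{ipo_volume_soliton_intro} becomes, according as $\sigma < 2$ or $\sigma = 2$, exactly alternative $(i)$ (that is, $\liminf_{r\to\infty}\log|B_r|/r^{2-\sigma} < \infty$) or alternative $(ii)$ (that is, $\liminf_{r\to\infty}\log|B_r|/\log r < \infty$) of \eqref{volgrowth_sigmamagzero}. Hence Theorem \ref{teo_main_2} applies and yields $f(u^*) \le 0$, i.e.\ $1 \le 0$, a contradiction. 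If instead $\sigma = 0$, we are given $u^* < \infty$; since $\log r = o(r^2)$, the assumption $\liminf_{r\to\infty}\log|B_r|/\log r < \infty$ forces $\liminf_{r\to\infty}\log|B_r|/r^{\chi+1-\mu} = \liminf_{r\to\infty}\log|B_r|/r^2 < \infty$, so by Theorem \ref{teo_main_2_wmp} (again with $\chi = 1$, $\mu = 0$) the principle $\wmp$ holds on $M$; applying it to the solution $u$ of the translator equation, which is bounded above, gives once more $1 = f(u^*) \le 0$, absurd. In both cases no graphical self-translator satisfying \eqref{crescitav_soliton_intro} can exist.

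The analytic substance is entirely contained in Theorems \ref{teo_main_2} and \ref{teo_main_2_wmp}, so the proof is a matter of bookkeeping, and I expect the only delicate points to be: (a) verifying that the translator equation genuinely fits the template \eqref{assu_Fgen} with $\chi$ necessarily equal to $1$ — for $\chi < 1$ the factor $|Du|^{1-\chi}/\sqrt{1+|Du|^2}$ vanishes at critical points of $u$ (and a self-translator has such points), so the constant $1$ could not be bounded below by it, whereas $\chi = 1$ makes the weight trivial; and (b) checking that the abstract volume-growth thresholds of Theorem \ref{teo_main_2}, once rewritten in the variable $\sigma$, reproduce \eqref{ipo_volume_soliton_intro} on the nose, including the borderline value $\sigma = 2$, where the volume condition passes from the form $\liminf \log|B_r|/r^{2-\sigma} < \infty$ to $\liminf \log|B_r|/\log r < \infty$. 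A minor third point is to match, when $\sigma = 0$, the one-sided bound in \eqref{crescitav_soliton_intro} with the translating direction encoded by the sign in \eqref{translator}.
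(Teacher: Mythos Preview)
Your argument is correct and follows exactly the paper's own proof: reduce the translator equation to the inequality \eqref{eq_nostraineq} with $\chi=1$, $\mu=0$, $f\equiv 1$, then invoke Theorem~\ref{teo_main_2} for $\sigma>0$ and Theorem~\ref{teo_main_2_wmp} for $\sigma=0$ to reach the contradiction $1=f(u^*)\le 0$. One harmless slip: in the $\sigma=0$ case you quote the volume hypothesis as $\liminf\log|B_r|/\log r<\infty$, but $\sigma=0$ falls under the first line of \eqref{ipo_volume_soliton_intro}, which already reads $\liminf\log|B_r|/r^{2}<\infty$ --- exactly what Theorem~\ref{teo_main_2_wmp} needs, so your intermediate implication is unnecessary (though valid).
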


\begin{proof}[Proof: sketch]
	It is a direct application of Theorem \ref{teo_main_2} ($\sigma>0$) or Theorem \ref{teo_main_2_wmp} ($\sigma=0$),  taking into account that $u : M \ra \R$ satisfies
	$$
	\diver\left( \frac{D u}{\sqrt{1+|D u|^2}} \right) = \frac{1}{\sqrt{1+|D u|^2}} \qquad \text{on } \, M.
	$$
\end{proof}

The generality of Theorem \ref{teo_main_2}, and of its proof, allow for applications to self-translators in Euclidean space whose translation vector field $Y$ differs from the vertical field $\partial_s$. The next result relates to \cite{baoshi}, where the authors proved that hyperplanes are the only complete self-translators whose Gauss image lies in a spherical cap of $\Sph^{m}$ of radius $< \pi/2$: it is of interest also because it shows the sharpness of Theorem \ref{teo_main_2}, in its more general form described in Remark \ref{rem_localization}, even with respect to the constant $c$ in \eqref{eq_cuchi}.

\begin{theorem}[\cite{bmpr}, Th. 7.20]\label{teo_soliton_Rm}
	Let $\Sigma \ra \R \times \R^m$ be the entire graph over $\R^m$ associated to $v : \R^m \ra \R$. Assume that
	\begin{equation}\label{ipo_soliton_rm}
	\limsup_{r(x) \ra \infty} \frac{|v(x)|}{r(x)} = \hat v < \infty.
	\end{equation}
	Then, $\Sigma$ cannot be a self-translator with respect to any parallel field $Y$ whose angle $\vartheta \in (0, \pi/2)$ with the horizontal hyperplane $\R^m$ satisfies
	\begin{equation}\label{strano_soliton}
	\tan \vartheta > \hat v.
	\end{equation}
	In particular, if $\hat v = 0$, then $\Sigma$ cannot be a self-translator with respect to a vector $Y$ which is not tangent to the horizontal $\R^m$.
\end{theorem}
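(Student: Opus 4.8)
The plan is to massage the soliton equation into a differential inequality of the type governing the strong maximum principle at infinity, localized to the region where $|Dv|$ is small, and then to play the strict gap $\tan\vartheta>\hat v$ against the freedom in choosing that localization. First I would record the equation: writing $Y=|Y|\big(\sin\vartheta\,\partial_s+\cos\vartheta\,w\big)$ with $w$ a unit vector in $\R^m$, and letting $\nu=(1,-Dv)/\sqrt{1+|Dv|^2}$ be the upward unit normal of $\Sigma$, the soliton identity $mH=\langle Y,\nu\rangle$ together with $mH=\diver\big(Dv/\sqrt{1+|Dv|^2}\big)$ gives
\[
\diver\!\left(\frac{Dv}{\sqrt{1+|Dv|^2}}\right)=\frac{|Y|\big(\sin\vartheta-\cos\vartheta\,\langle w,Dv\rangle\big)}{\sqrt{1+|Dv|^2}}\qquad\text{on }\R^m
\]
(if the vertical component of $Y$ points downward, replace $v$ by $-v$, which reflects $\Sigma$ across $\{s=0\}$ and leaves $\hat v$ unchanged, so $\vartheta\in(0,\pi/2)$ may be assumed; also $v$ is smooth by elliptic regularity, and non-constant, else $\sin\vartheta=0$).

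Next I would pinch. Since $\hat v<\tan\vartheta$ I may fix $\vartheta'\in(\arctan\hat v,\vartheta)$ and put $\eps:=\tan\vartheta'>\hat v$, $\delta:=\sin(\vartheta-\vartheta')>0$. On the open set $\{|Dv|<\eps\}$ one has $\langle w,Dv\rangle\le|Dv|<\eps$, hence the numerator above is $\ge\sin\vartheta-\cos\vartheta\,\eps=\sin(\vartheta-\vartheta')/\cos\vartheta'$ and the denominator is $\le\sqrt{1+\eps^2}=1/\cos\vartheta'$, so that
\[
\diver\!\left(\frac{Dv}{\sqrt{1+|Dv|^2}}\right)\ge|Y|\,\delta\qquad\text{on }\{|Dv|<\eps\}.
\]
Fixing any $\chi\in(0,1]$ and using $|Dv|^{1-\chi}/\sqrt{1+|Dv|^2}\le\eps^{1-\chi}$ there, this rewrites, with $K:=|Y|\,\delta\,\eps^{\chi-1}>0$ and $\mu=0$, as
\[
\diver\!\left(\frac{Dv}{\sqrt{1+|Dv|^2}}\right)\ge\frac{K}{(1+r)^\mu}\,\frac{|Dv|^{1-\chi}}{\sqrt{1+|Dv|^2}}\qquad\text{on }\{v>\gamma,\ |Dv|<\eps\}
\]
for every $\gamma\in\R$; that is, $v$ solves the inequality \eqref{eq_SMP_vera} on all gradient-truncated superlevel sets, with a strictly positive $K$.

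Then I would invoke the strong maximum principle at infinity for solutions of controlled growth. On $M=\R^m$ one has $\log|B_r|\sim m\log r$, so the volume threshold attached to $\chi\in(0,1]$, $\mu=0$ and growth exponent $\sigma=1$ holds, and $v_+(x)\le|v(x)|=O(r(x))$, with $v_+=o(r)$ exactly when $\hat v=0$. The gradient-truncated, controlled-growth counterpart of Theorem~\ref{teo_main_2} (in the spirit of Theorem~\ref{teo_SMP_intro} and Remark~\ref{rem_localization}, and established in \cite{bmpr}) then yields a quantitative bound $K\le c\,\hat v^{\,\chi}$ with an explicit $c=c(m,\chi)$. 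If $\hat v=0$ this gives $K\le 0$, contradicting $K>0$ — this already proves the last assertion of the theorem (for $\vartheta=\pi/2$ one may instead use Theorem~\ref{teo_soliton_intro}). In general one is left to choose $\vartheta'$ and $\chi$ so that $|Y|\sin(\vartheta-\vartheta')\,(\tan\vartheta')^{\chi-1}>c\,\hat v^{\,\chi}$; letting $\vartheta'\downarrow\arctan\hat v$ this becomes the inequality $|Y|\sin(\vartheta-\arctan\hat v)>c\,\hat v$, and this is the only place the strict gap $\tan\vartheta>\hat v$ is used.

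The hard part, and the technical core of the statement, is precisely this last comparison: one must control how the constant $c$ produced by the maximum principle at infinity depends on the parameters and check it is small enough to be beaten — equivalently, one needs the sharp quantitative version of the gradient-truncated, controlled-growth strong maximum principle (the analogue for \eqref{eq_SMP_vera} of the bound $K\le c\,\hat u^{\chi}$ of Remark~\ref{rem_localization}). The reduction in the first two steps is, by contrast, entirely elementary.
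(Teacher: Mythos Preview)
Your reduction contains a genuine gap, and it is precisely at the point you identify as ``the hard part.'' The issue is twofold. First, the inequality you obtain holds only on $\{|Dv|<\eps\}$, so you need a \emph{strong} maximum principle (gradient-localized) for a solution that is merely of linear growth, not bounded. Theorem~\ref{teo_SMP_intro} is stated for bounded $u$, while Theorem~\ref{teo_main_2} and Remark~\ref{rem_localization} concern the weak principle (inequality on $\{u>\gamma\}$, no gradient cutoff). You are therefore invoking a hybrid result that is not stated in the survey, and you do not prove it. Second, even granting such a hybrid, you end up needing $|Y|\sin(\vartheta-\arctan\hat v)>c\,\hat v$ for a constant $c=c(m,\chi)$ you never compute; the strict gap $\tan\vartheta>\hat v$ does not by itself give this unless $c$ has the right size. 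So the argument, as written, does not close.

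The paper's proof avoids both problems by a single move you are missing: it absorbs the bad term $-\cos\vartheta\,\langle w,Dv\rangle$ into the operator via the weight $e^{x_1\cos\vartheta}$, rewriting the soliton equation as
\[
e^{-x_1\cos\vartheta}\diver\!\left(e^{x_1\cos\vartheta}\frac{Du}{\sqrt{1+|Du|^2}}\right)=\frac{\sin\vartheta}{\sqrt{1+|Du|^2}}\qquad\text{on all of }\R^m.
\]
Now the right-hand side is exactly of the form $K\,|Du|^{1-\chi}/\sqrt{1+|Du|^2}$ with $\chi=1$, $K=\sin\vartheta$, and no gradient localization is needed: the \emph{weak} principle of Theorem~\ref{teo_main_2}/Remark~\ref{rem_localization}, adapted verbatim to the weighted volume, applies directly. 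The weighted volume satisfies $\liminf_{r\to\infty}r^{-1}\log\vol_{x_1\cos\vartheta}(B_r)=\cos\vartheta$, and this liminf is what produces the explicit constant $c=\cos\vartheta$ in $K\le c\,\hat u$. The conclusion $\sin\vartheta\le(\cos\vartheta)\hat v$ is then exactly $\tan\vartheta\le\hat v$, with no slack and no optimization over auxiliary parameters. In short: rather than truncating the gradient to kill the drift term, rewrite the drift as a weight; this trades a strong principle you do not have for a weak one you do, and makes the sharp constant fall out automatically.
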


\begin{remark}
	\emph{Condition \eqref{strano_soliton} is sharp: the totally geodesic hyperplane $\{ s = x_1 \tan \vartheta \}$ is a self-translator with respect to $\partial_1 + (\tan \vartheta) \partial_s$ that realizes equality in \eqref{strano_soliton}.
	}
\end{remark}

\begin{proof}[Proof: sketch]
	Up to a rotation of coordinates on $\R^m$ and a reflection in the $\R$-axis, we can assume that $\langle Y, \partial_s \rangle > 0$ and $Y = \cos \vartheta e_1 + \sin \vartheta \partial_s$, where $e_1$ is the gradient of the first coordinate function $x_1$. By \eqref{normal_geodesic} and \eqref{prescribed_geodesic}, the soliton equation $m H = \langle Y, \nu \rangle$ satisfied by $u(x) = t(v(x)) = v(x)$ can be written as
	\begin{equation}\label{eq_soli_Rm}
	e^{-x_1 \cos \vartheta} \diver \left( e^ {x_1 \cos \vartheta} \frac{D u}{\sqrt{1+|D u|^2}} \right) = \frac{\sin \vartheta}{\sqrt{1+|D u|^2}}.
	\end{equation}
	The operator in the left-hand side is self-adjoint on $\R^m$ endowed with the weighted measure $e^{x_1 \cos \vartheta} \di x$ ($\di x$ the Euclidean volume). The proof of Theorem \ref{teo_main_2} holds verbatim in the weighted setting, replacing the Riemannian volume with the weighted volume
	$$
	\vol_{x_1 \cos \vartheta}(B_r) = \int_{B_r} e^{x_1 \cos\vartheta} \di x,
	$$
	that satisfies
	\begin{equation}\label{weightedvol}
	\liminf_{r \ra \infty} \frac{\log\vol_{x_1 \cos\theta}(B_r)}{r} = \cos \vartheta < \infty.
	\end{equation}
	Assume by contradiction that \eqref{strano_soliton} holds, in particular, $u$ is non-constant. Applying Theorem~\ref{teo_main_2} to \eqref{eq_soli_Rm} (in the strengthened form described in Remark \ref{rem_localization}) with the choices
	$$
	K = \sin \vartheta, \ \ \sigma = 1, \ \ \chi = 1, \ \ \mu = 0,
	$$
	we conclude that $\sin\vartheta \le c \hat u$, for some explicit constant $c \ge 0$. The value of $c$, that also depends on \eqref{weightedvol}, becomes $c = \cos \vartheta$. Therefore, we deduce that $\hat u \ge \tan \vartheta$, which is a contradiction.
\end{proof}

With a very similar proof, we can also consider graphical self-expanders for the MCF, i.e., graphs that move by MCF along the integral curves of the position vector field
$$
Y(\bar x) = x^j \partial_j = \frac{1}{2} \bar D |\bar x|^2 \quad \mbox{for
	all }\bar x \in \R^{m+1}.
$$

\begin{theorem}[\cite{bmpr}, Thm. 7.23]\label{teo_soliton_expa}
	There exist no entire self-expanders $\Sigma \ra \R \times \R^{m+1}$ that are the graph of a bounded function $u : \R^m \ra \R$, unless $u \equiv 0$.
\end{theorem}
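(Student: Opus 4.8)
The plan is to rewrite the self-expander equation for a graph as a \emph{weighted} mean curvature equation, in which the drift term $\langle x, Du\rangle$ gets absorbed into the divergence, and then to invoke the weighted form of the weak maximum principle at infinity (Theorem \ref{teo_main_2_wmp}), applied to both $u$ and $-u$, exactly in the spirit of the proof of Theorem \ref{teo_soliton_Rm}. First I would derive the equation: writing $\bar M = \R \times \R^m$ with coordinates $(s,x)$, the position field is $Y(\bar x) = s\partial_s + \sum_i x^i\partial_i$, and for the graph $\Sigma = \{ s = u(x)\}$ with upward unit normal $\nu = (1+|Du|^2)^{-1/2}(\partial_s - Du)$ the soliton identity $\bar g(Y,\nu) = mH$ (see \eqref{normal_geodesic}, \eqref{prescribed_geodesic} with $h\equiv 1$) reads
\[
\diver\left( \frac{Du}{\sqrt{1+|Du|^2}}\right) = \frac{u - \langle x, Du\rangle}{\sqrt{1+|Du|^2}} \qquad \text{on } \R^m.
\]
The key point is that, choosing the Gaussian-type weight $w(x) = e^{|x|^2/2}$, one has $w^{-1}\diver(wV) = \diver V + \langle x, V\rangle$, so that the previous equation is equivalent to
\[
\frac{1}{w}\diver\left( w\, \frac{Du}{\sqrt{1+|Du|^2}}\right) = \frac{u}{\sqrt{1+|Du|^2}} \qquad \text{on } \R^m,
\]
which is precisely \eqref{eq_nostraineq} (with equality) in the $w$-weighted setting, for $\mu = 0$, $\chi = 1$ and $f(t) = t$.

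Next I would record the weighted volume growth: $\vol_w(B_r) = |\Sph^{m-1}|\int_0^r \rho^{m-1}e^{\rho^2/2}\di\rho$ grows like $r^{m-2}e^{r^2/2}$ as $r \ra \infty$, so that $\liminf_{r \ra \infty} r^{-2}\log \vol_w(B_r) = \tfrac12 < \infty$, which is exactly the borderline case of \eqref{volgrowth_sigmazero} admissible for $\mu = 0 < \chi+1 = 2$. (This is also why boundedness of $u$ is the natural hypothesis: a polynomial bound $u_+ = o(r^\sigma)$ with $\sigma>0$ would require a strictly slower weighted volume growth, which $e^{r^2/2}$ does not have, so only the weak principle, i.e.\ the case $\sigma = 0$, is available here.) Since $u$ is bounded, hence bounded above, and $f(t) = t$ is continuous, the weighted analogue of Theorem \ref{teo_main_2_wmp} — whose proof carries over verbatim to the weighted setting, as already exploited in the proof of Theorem \ref{teo_soliton_Rm} — gives $f(u^*) = u^* \le 0$ whenever $u$ is non-constant. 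Moreover $\bar u := -u$ solves the very same weighted equation (with $\bar f(t) = -f(-t) = t$), is bounded, and is non-constant if and only if $u$ is; applying the weighted principle to $\bar u$ yields $(-u)^* = -u_* \le 0$, i.e.\ $u_* \ge 0$. For a non-constant $u$ this would force $0 \le u_* \le u^* \le 0$, hence $u \equiv 0$, a contradiction; therefore $u$ is constant, and substituting a constant into the equation immediately gives $u \equiv 0$.

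The only step requiring genuine care is the passage to the weighted framework — namely, checking that the maximum principle at infinity of \cite[Thm.~7.5]{bmpr} (Theorem \ref{teo_main_2_wmp} above) remains valid when the Riemannian volume is replaced by $\vol_w$ and the mean curvature operator by its $w$-weighted counterpart — together with a careful bookkeeping of the normalization in $\bar g(Y,\nu) = mH$, so that the drift term enters with the sign for which $w = e^{|x|^2/2}$ (rather than $e^{-|x|^2/2}$) is the correct integrating weight. Both points are routine given the machinery already set up in \cite{bmpr}, and with them in hand the argument above is immediate.
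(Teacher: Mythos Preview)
Your proposal is correct and follows essentially the same route as the paper: rewrite the self-expander equation in weighted divergence form with weight $e^{|x|^2/2}$, check the weighted volume growth $\liminf_{r\to\infty} r^{-2}\log\vol_w(B_r)<\infty$, and apply the (weighted) weak maximum principle at infinity to both $u$ and $-u$ with $\chi=1$, $\mu=0$, $f(t)=t$. The only cosmetic difference is that the paper cites Theorem~\ref{teo_main_2} rather than Theorem~\ref{teo_main_2_wmp}; since $u$ is bounded both apply, and your choice is arguably the more natural one.
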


\begin{proof}
	Set $\rho(x) = |x|$ for $x \in \R^m$, and write the position field along $\Sigma$ as follows:
	$$
	Y\big(v(x),x\big) = \frac{1}{2} D (\rho^2) + v \partial_s.
	$$
	The soliton equation $m H = (Y, \nu)$ satisfied by $u(x) = t(v(x)) = v(x)$ becomes
	$$
	e^{-\rho^2/2} \diver \left( e^{\rho^2/2} \frac{D u}{\sqrt{1+|D u|^2}} \right) = \frac{u}{\sqrt{1+|D u|^2}}.
	$$
	The only constant solution is $u \equiv 0$. By contradiction, assume the existence of a non-constant solution $u$. An explicit computation shows that
	$$
	\liminf_{r \ra \infty} \frac{\log \vol_{e^{\rho^2/2}}(B_r)}{r^2} < \infty.
	$$
	Applying Theorem \ref{teo_main_2}, adapted to weighted volumes, to both $u$ and $-u$, with the choices $\chi=1$, $\mu = 0$, we get $0 \le u \le 0$, which is the final contradiction.
\end{proof}

\subsection{Keller-Osserman condition and Liouville theorems}

To obtain the rigidity of solutions of the prescribed mean curvature equation without assuming an a-priori control on $u$, on general manifolds one needs $f$ to grow sufficiently fast at infinity. A notable exception is Theorem \ref{teo_tkachev} on $\R^m$, of which we now comment an improved version; in \cite[Thm. 10.30]{bmpr}, we modified the original capacity argument in \cite{Serrin_4,tkachev} to apply to solutions of
\[
\diver \left( \frac{Du}{\sqrt{1+|Du|^2}} \right) \ge \frac{f(u)}{(1+r)^\mu} \frac{|Du|}{\sqrt{1+|Du|^2}} \qquad \text{on } \, M,
\]
provided that $M$ has a polynomial volume growth, and we prove the next result. Note that the inequality corresponds to the limiting case $\chi =0$ in \eqref{eq_u_WMP}. We mention that the method is successfully used for more general equations, in particular we refer to works of Mitidieri and Pohozaev \cite{mitpoho}, D'Ambrosio and Mitidieri \cite{DAmbrMit,dambrosiomitidieri_2}, Farina and Serrin \cite{farinaserrin1,farinaserrin2}.

\begin{theorem}[Thm. 10.30 in \cite{bmpr}]\label{teo_tkachev_2}
	Let $M$ be a complete Riemannian manifold, and consider a non-decreasing $f \in C(\R)$ and a constant
	\[
	\mu < 1.
	\]
	If
	\begin{equation}\label{polinomial_tkachev}
	\liminf_{r \ra \infty} \frac{\log |B_r|}{\log r} < \infty,
	\end{equation}
	then every non-constant solution $u \in \lip_\loc(M)$ of
	\begin{equation}\label{borderline_eq_tkachev}
	\diver \left( \frac{Du}{\sqrt{1+|Du|^2}} \right) \ge \frac{f(u)}{(1+r)^\mu} \frac{|Du|}{\sqrt{1+|Du|^2}} \qquad \text{on } \, M
	\end{equation}
	satisfies $f(u) \le 0$ on $M$.
\end{theorem}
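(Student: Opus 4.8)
\emph{Plan.} The plan is to argue by contradiction through a weighted integral (capacity-type) estimate, in the spirit of Serrin and Tkachev, exploiting two features of the setting: the flux field $Du/\sqrt{1+|Du|^2}$ has norm $<1$ irrespective of $|Du|$, and the zeroth-order weight $(1+r)^{-\mu}$ fails to be integrable at infinity precisely because $\mu<1$.

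Suppose $f(u(\bar x))>0$ for some $\bar x\in M$. Since $f$ is continuous and non-decreasing, fix $a<u(\bar x)$ with $c_0:=f(a)>0$, so that $f(t)\ge c_0$ for every $t\ge a$, and set $u_a:=(u-a)_+\in\lip_\loc(M)$, which is non-negative and not identically $0$. By the Pasting Lemma (Kato inequality, \cite{dambrosiomitidieri_2}), $u_a$ solves weakly
\[ \diver\left(\frac{Du_a}{\sqrt{1+|Du_a|^2}}\right)\ \ge\ \frac{c_0}{(1+r)^\mu}\,\frac{|Du_a|}{\sqrt{1+|Du_a|^2}}\qquad\text{on }M, \]
since on $\{u>a\}$ the left-hand side coincides with the one for $u$ while $f(u)\ge c_0$ there, on $\{u<a\}$ both sides vanish, and on $\{u=a\}$ the distributional divergence gains only a non-negative measure. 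Writing $V:=Du_a/\sqrt{1+|Du_a|^2}$, so that $|V|<1$ and $|V|$ equals the factor $|Du_a|/\sqrt{1+|Du_a|^2}$ on the right, this reads $\diver V\ge c_0(1+r)^{-\mu}|V|$ weakly on $M$.

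Next I would test against $\varphi:=\phi(r)\,\xi_R(r)\ge0$, with $\phi(r):=\exp\!\big(-\tfrac{\eps}{1-\mu}\big((1+r)^{1-\mu}-1\big)\big)$ for a fixed $\eps\in(0,c_0)$ --- so that $\phi$ is decreasing and $|\phi'|=\eps(1+r)^{-\mu}\phi$ --- and $\xi_R$ the standard Lipschitz cutoff, $\xi_R\equiv1$ on $B_R$, $\xi_R\equiv0$ off $B_{2R}$, $|D\xi_R|\le 2/R$. Using $|V|<1$, $|\nabla r|\le1$ and the defining property of $\phi$, the weak formulation $-\int_M V\cdot\nabla\varphi\ge\int_M c_0(1+r)^{-\mu}|V|\varphi$, after absorbing the term generated by $\phi'$ against the $c_0$-term, becomes
\[ (c_0-\eps)\int_{B_R}\frac{|V|\,\phi}{(1+r)^\mu}\ \le\ \int_{B_{2R}\setminus B_R}\phi\,|D\xi_R|\,|V|\ \le\ \frac{2\,\phi(R)}{R}\,|B_{2R}|, \]
the last bound using monotonicity of $\phi$ and $|V|<1$. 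It is here that the boundedness of the operator is essential: the annular error term requires no gradient estimate for $u$, only the trivial bound $|V|<1$ (contrast with the Laplacian, where a gradient bound is needed).

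Since $\mu<1$, the weight decays like $\phi(R)=\exp\big(-cR^{1-\mu}(1+o(1))\big)$, faster than any power of $R$. Taking $R=R_k\to\infty$ along a sequence realizing the liminf in \eqref{polinomial_tkachev}, one gets $|B_{2R_k}|\le R_k^{D}$ for some $D>0$ and all large $k$, so the right-hand side above tends to $0$; by monotone convergence the left-hand side increases to $(c_0-\eps)\int_M|V|\,\phi\,(1+r)^{-\mu}$, which is therefore $0$. As $\phi>0$ and $(1+r)^{-\mu}>0$ everywhere, this forces $|V|\equiv0$, i.e. $Du_a\equiv0$ a.e., so $u_a$ is constant on the connected manifold $M$; but $u_a\equiv0$ contradicts $u(\bar x)>a$, while $u_a\equiv\mathrm{const}>0$ makes $u$ constant, against the hypothesis. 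Hence no such $\bar x$ exists and $f(u)\le0$ on $M$. The main point to get right, and where the hypotheses are used sharply, is the interplay between the exponential weight and the volume growth: one must work with the sequence of radii furnished by the liminf, not all radii, and it is exactly the non-integrability of $(1+s)^{-\mu}$, that is $\mu<1$, that lets $\phi$ outpace the polynomial volume of $B_{2R_k}$; for $\mu\ge1$ the weight decays at most polynomially and the scheme collapses, consistently with the sharpness examples recalled in Remark \ref{rem_sharpness}.
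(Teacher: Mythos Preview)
Your proof is correct, and it takes a genuinely different route from the paper's.

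The paper's argument (which it attributes to the Serrin--Tkachev capacity method and carries out in \cite[Thm.~10.30]{bmpr}) tests the inequality against $\phi=(f(u))_+^{\alpha-1}\psi$ for a cutoff $\psi$, uses the monotonicity of $f$ to drop the term containing $f'(u)$, applies H\"older's inequality with exponent $\alpha$, and then chooses $\alpha$ large enough so that the exponent $d_0+\mu(\alpha-1)-\alpha$ becomes negative. This requires a preliminary approximation of $f$ by locally Lipschitz non-decreasing functions so that $f(u)$ can be differentiated. Your approach bypasses both the approximation step and the H\"older trick: by truncating $u$ at a level $a$ where $f(a)>0$ you reduce to the constant right-hand side $c_0$, and the role of the large parameter $\alpha$ is played by the exponential radial weight $\phi(r)=\exp\big(-\tfrac{\eps}{1-\mu}((1+r)^{1-\mu}-1)\big)$, which is tuned so that the gradient term it generates is exactly absorbed by the zeroth-order term. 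The endgame is the same in both proofs: the annular remainder is controlled by $|V|<1$ alone (no gradient bound on $u$), and $\mu<1$ together with polynomial volume growth along the subsequence realizing the $\liminf$ forces the remainder to vanish. Your argument is slightly more elementary and makes the mechanism (exponential weight beats polynomial volume precisely when $\mu<1$) more transparent; the paper's version, on the other hand, is the one that fits into the general Mitidieri--Pohozaev framework cited there.
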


\begin{remark}[\textbf{Sharpness of $\mu<1$}]
	\emph{The restriction $\mu<1$ in the above result is sharp: example $4)$ in Remark \ref{rem_sharpness} exhibits a manifold with polynomial growth ($\alpha = -2$) satisfying
		\[
		\lim_{r \ra \infty} \frac{\log|B_r|}{\log r} = d_0 \in (2-\sigma,\infty),
		\]
		for some $\sigma \in (0,1]$, admitting a solution of
		$$
		\diver \left( \frac{D u}{\sqrt{1+|D u|^2}} \right) \ge \frac{C}{1+r} \frac{|D u|}{\sqrt{1+|D u|^2}} \qquad \text{on } \, M
		$$
		that grows like $r^\sigma$, $\sigma \in (0,1]$. Observe that the restriction $\sigma = 1$ is sharp, and indeed, the  assertion of Theorem~\ref{teo_tkachev_2} holds for every $\mu \in \R$ whenever
		\[
		\liminf_{r \ra \infty} \frac{|B_r|}{r} = 0,
		\]
		see \cite[Thm. 10.32]{bmpr}.	
	}
\end{remark}

\begin{remark}[\textbf{Sharpness of polynomial growth}]\label{rem_polysharp}
	\emph{Case $6)$ in Remark \ref{rem_sharpness}, applied with $\chi = 0$, $\mu<1$ and any $\sigma> 0$, exhibits a manifold with exponential growth satisfying
		\begin{equation}\label{eq_border}
		\lim_{r \ra \infty} \frac{\log|B_r|}{r^{1-\mu}}  \in (0, \infty),
		\end{equation}
		admitting unbounded solutions to \eqref{borderline_eq_tkachev}, for $f$ a positive constant, with arbitrarily small polynomial growth. The positivity of the limit in \eqref{eq_border} is also sharp, because otherwise, 
		by Theorem~\ref{teo_main_2} and Remark~\ref{rem_chizero}, solutions with polynomial growth to \eqref{borderline_eq_tkachev} with $f$ a positive constant do not exist.
	}
\end{remark}

As a consequence of Theorem \ref{teo_tkachev_2} and of Bernstein Theorem \ref{teo_B2}, we directly deduce the following

\begin{theorem}[\cite{cmmr}]\label{teo_cmmr_allatkachev}
	Let $M$ be a complete Riemannian manifold with $\Ricc \ge 0$. Let $f\in C(\R)$ be non-decreasing, $f \not \equiv 0$, and suppose that $b \in C(M)$ satisfies 
	\[
	b(x) \gtrsim \big( 1 + r(x) \big)^{-\mu} \qquad \text{on } \, M,
	\]	
	for some constant $\mu < 1$, where $r$ is the distance from a fixed origin. Then, any solution of
	\[
	\diver \left( \frac{Du}{\sqrt{1+|Du|^2}} \right) = b(x) f(u) \qquad \text{on } \, M,
	\]
	is a constant $c$ satisfying $f(c) = 0$.
\end{theorem}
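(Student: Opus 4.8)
The plan is to squeeze $f(u)\equiv 0$ out of Theorem~\ref{teo_tkachev_2} (applied both to $u$ and to $-u$), and then to run the Bernstein-type Theorem~\ref{teo_B2}. First I would note that $\Ricc \ge 0$ forces, by Bishop--Gromov comparison, $|B_r| \lesssim r^m$, so the polynomial volume growth hypothesis \eqref{polinomial_tkachev} of Theorem~\ref{teo_tkachev_2} is automatically satisfied; moreover, by elliptic regularity for the prescribed mean curvature operator, any weak solution $u$ of $\diver\big(Du/\sqrt{1+|Du|^2}\big)=b(x)f(u)$ lies in $\lip_\loc(M)$.

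The key reduction is to cast the equation into the divergence-inequality form required by Theorem~\ref{teo_tkachev_2}. Since $f$ is continuous and non-decreasing, either $f\le 0$ on all of $\R$ (in which case $f(u)\le 0$ trivially), or $b_+:=\inf\{t:f(t)>0\}$ is finite, $f(b_+)=0$, $f\le 0$ on $(-\infty,b_+]$ and $f>0$ on $(b_+,\infty)$. In the latter case set $f^+:=\max\{f,0\}$ (continuous and non-decreasing) and $w:=\max\{u,b_+\}$. On the open set $\{u>b_+\}$, using $b(x)\gtrsim(1+r)^{-\mu}$ and $|Du|/\sqrt{1+|Du|^2}\le 1$,
\[
\diver\left( \frac{Du}{\sqrt{1+|Du|^2}}\right) = b(x)f(u) \;\ge\; \frac{c\,f^+(u)}{(1+r)^\mu}\,\frac{|Du|}{\sqrt{1+|Du|^2}}
\]
for a suitable $c>0$, while the constant $b_+$ solves the same inequality on all of $M$ (both sides vanish). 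By the Pasting Lemma (cf.\ \cite{dambrosiomitidieri_2}), $w$ then solves this inequality weakly on $M$, with non-decreasing continuous nonlinearity $c\,f^+$ and $\mu<1$.

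Now I would apply Theorem~\ref{teo_tkachev_2} to $w$. If $w$ were non-constant it would give $c\,f^+(w)\le 0$, i.e.\ $f^+(w)\equiv 0$, i.e.\ $w\le b_+$; but $w\ge b_+$ by construction, so $w\equiv b_+$, contradicting non-constancy. Hence $w$ is constant, $w\equiv c_1\ge b_+$. If $c_1>b_+$, then $u\equiv c_1$, and substitution in the original equation yields $0=b(x)f(c_1)$, hence $f(c_1)=0$ — impossible since $c_1>b_+$. Therefore $c_1=b_+$, so $u\le b_+$ and $f(u)\le 0$ on $M$. Applying the same reasoning to $\bar u:=-u$, which solves the analogous equation with nonlinearity $\bar f(t):=-f(-t)$ (still continuous, non-decreasing, $\not\equiv 0$), gives $f(u)\ge 0$ on $M$; combining, $f(u)\equiv 0$.

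Finally, $f(u)\equiv 0$ makes $u$ an entire minimal graph over $M$; and since $f$ is non-decreasing with $f\not\equiv 0$, its zero set is a proper closed subinterval of $\R$, hence bounded above or below, and $u$ takes values there, so $u$ is one-sided bounded. Replacing $u$ by $\pm u+\mathrm{const}$ produces a positive entire minimal graph over $M$, which is constant by Theorem~\ref{teo_B2} since $\Ricc\ge 0$; thus $u\equiv c$ with $f(c)=0$. The step I expect to be the main obstacle is the second one: turning $\diver\big(Du/\sqrt{1+|Du|^2}\big)=b(x)f(u)$ into the exact format of Theorem~\ref{teo_tkachev_2} — localizing to the superlevel set where $f(u)>0$, absorbing the factor $|Du|/\sqrt{1+|Du|^2}\le 1$, using the weight bound, gluing with the Pasting Lemma, and separately disposing of the constant solutions (to which Theorem~\ref{teo_tkachev_2} does not apply) directly against the original equation.
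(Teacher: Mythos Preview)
Your proof is correct and follows the same route as the paper's: Bishop--Gromov for the volume bound, Theorem~\ref{teo_tkachev_2} applied to $u$ and to $-u$ to force $f(u)\equiv 0$, then one-sided boundedness from $f\not\equiv 0$ together with Theorem~\ref{teo_B2} to conclude constancy. Your Pasting-Lemma reduction (via $f^+$ and $w=\max\{u,b_+\}$) is more careful than the paper's one-line assertion that ``$u$ solves \eqref{borderline_eq_tkachev} up to a positive constant'', but it is the same strategy rendered more rigorously, and the extra care is justified since the naive bound $b(x)f(u)\ge c(1+r)^{-\mu}f(u)\,|Du|/\sqrt{1+|Du|^2}$ can fail pointwise where $f(u)<0$.
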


\begin{proof}
	First, observe that $u$ solves \eqref{borderline_eq_tkachev} up to a positive constant that we can include in the definition of $f$. The Bishop-Gromov comparison theorem guarantees that \eqref{polinomial_tkachev} is satisfies, hence, by Theorem \ref{teo_tkachev_2}, $f(u) \le 0$ on $M$. Applying the same argument to $-u$ we get $f(u) \equiv 0$. Hence, $u$ gives rise to a minimal graph in $\R \times M$, that is one sided bounded because $f \not\equiv 0$. Theorem \ref{teo_graph_soloricci_intro} guarantees that $u \equiv c$ is constant, and plugging in the equation we get $f(c) = 0$.
\end{proof}
As Remark \ref{rem_polysharp} shows, when the growth of $|B_r|$ is faster than polynomial one needs some growth conditions on $f$ to guarantee that solutions of
\[
\diver \left( \frac{Du}{\sqrt{1+|Du|^2}} \right) \ge \frac{f(u)}{(1+r)^{\mu}} \frac{|Du|^{1-\chi}}{\sqrt{1+|Du|^2}} \qquad \text{on } \, M
\]
are bounded from above and satisfy $f(u^*) \le 0$. In \cite{bmpr}, for the class of inequalities
\begin{equation}\label{eq_general_phi}
\diver \left( \frac{\varphi(|Du|)}{|Du|}Du \right) \ge b(x)f(u)\ell(|Du|)
\end{equation}
and when
\[
\frac{s \varphi'(s)}{\ell(s)} \in L^1(0^+) \backslash L^1(+\infty),
\]	
considering the homeomorphism $K : [0, \infty) \ra [0, \infty)$ given by
\begin{equation}\label{def_K_gen}
K(t) = \int_0^t \frac{s \varphi'(s)}{\ell(s)}\di s,
\end{equation}
we identify the following Keller-Osserman condition:
\begin{equation}\label{eq_KO_congrad}
\int^{\infty} \frac{\di t}{K^{-1}(F(t))} < \infty,
\end{equation}	
with $F$ as in \eqref{def_F}. Observe that \eqref{eq_KO_congrad} does not depend on the growth/decay of $b$, but the latter provides the thresholds to guarantee that \eqref{eq_KO_congrad} is \emph{sufficient} to ensure that solutions of \eqref{eq_general_phi} are bounded from above and satisfy $f(u^*) \le 0$. The meaning of the bounds on $b$ is described in detail in \cite{bmpr}. However, for the mean curvature operator and in our case of interest
\begin{equation}\label{ex_l}
\ell(t) = \frac{t^{1-\chi}}{\sqrt{1+t^2}}, \qquad \chi \in [0,1],
\end{equation}
still $K(\infty) < \infty$. The proposal in \cite{maririgolisetti} to modify \eqref{def_K_gen}	for the mean curvature operator by setting
\[
K(t) = \int_0^t \frac{\varphi(s)}{\ell(s)}\di s = \int_0^t \frac{s \di s}{\ell(s)\sqrt{1+s^2}}
\]
turns out to lead to sharp results, and in our prototype case \eqref{ex_l} the associated Keller-Osserman condition \eqref{eq_KO_congrad} takes the form
\begin{equation}\label{KO_formean}
F^{\frac{1}{\chi+1}} \in L^1(\infty).
\end{equation} 	
If $f(t) \ge Ct^\omega$ for large $t$, \eqref{KO_formean} is implied by $\omega> \chi$. In this case, in \cite[Thm. 10.33]{bmpr} we prove a sharp Liouville theorem on manifolds with controlled volume growth of geodesic balls.

\begin{theorem}[\cite{bmpr}, Thm 10.33]\label{teo_main}
	Let $M$ be a complete Riemannian manifold, and fix parameters
	\[
	\chi \in [0,1], \qquad \mu \le \chi+1.
	\]
	Let $f \in C(\R)$ satisfy
	\[
	f(t) \ge C t^\omega \qquad \text{for } \, t >>1,
	\]
	for some $\omega > \chi$, and assume that $u \in \lip_\loc(M)$ is a non-constant solution of
	\begin{equation}\label{eq_PMC}
	\diver \left( \frac{Du}{\sqrt{1+|Du|^2}} \right) \ge \frac{f(u)}{(1+r)^\mu}\frac{|Du|^{1-\chi}}{\sqrt{1+|Du|^2}} \qquad \text{on } \, M.
	\end{equation}
	If either
	\begin{equation}\label{volgrowth_sigmazero_Linfty_inthetheorem}
	\begin{array}{lll}
	\mu < \chi+1 & \text{and} & \disp \qquad \liminf_{r \ra \infty} \frac{\log |B_r|}{r^{\chi+1-\mu}} < \infty \quad \text{($=0$ if $\chi=0$)}; \\[0.4cm]
	\text{or}\\[0.1cm]
	\mu = \chi+1 & \text{and} & \disp \qquad \liminf_{r \ra \infty} \frac{\log|B_r|}{\log r} < \infty \quad \text{($\le 2$ if $\chi=0$)},
	\end{array}
	\end{equation}
	then $u$ is bounded above and $f(u^*) \le 0$.
\end{theorem}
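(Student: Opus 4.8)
The plan is to split the statement into the two assertions ``$u^*<\infty$'' and ``$f(u^*)\le 0$'', and first to note that the hypothesis $\omega>\chi$ is exactly the (modified) Keller--Osserman condition \eqref{KO_formean} for the mean curvature operator with $\ell$ as in \eqref{ex_l}: from $f(t)\ge Ct^\omega$ for $t\gg1$ and \eqref{def_F} one gets $F(t)\gtrsim t^{\omega+1}$, hence $F^{1/(\chi+1)}(t)\gtrsim t^{(\omega+1)/(\chi+1)}$ with exponent $>1$, so $F^{1/(\chi+1)}\in L^1(\infty)$. Once the bound $u^*<\infty$ is available the second assertion costs nothing: the volume hypothesis \eqref{volgrowth_sigmazero_Linfty_inthetheorem} is \emph{verbatim} condition \eqref{volgrowth_sigmazero}, and since $\mu\le\chi+1$, Theorem~\ref{teo_main_2_wmp} applied to the same $f$ gives $f(u^*)\le0$. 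So the whole content of the theorem is the a-priori bound $u^*<\infty$.

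For that bound I would argue by contradiction, assuming $u^*=+\infty$. Choose $t_0$ with $f(t)\ge Ct^\omega>0$ for $t\ge t_0$; by the Kato/Pasting Lemma the function $w:=\max\{u-t_0,0\}\in\lip_\loc(M)$ satisfies, on all of $M$,
\[
\diver\!\left(\frac{Dw}{\sqrt{1+|Dw|^2}}\right)\ \ge\ \frac{c\,w^{\,\omega}}{(1+r)^\mu}\,\frac{|Dw|^{1-\chi}}{\sqrt{1+|Dw|^2}},
\]
with $\{w>0\}\ne\varnothing$. I would then run a capacitary/integral estimate in the spirit of the (omitted) proof of Theorem~\ref{teo_tkachev_2} and of the Mitidieri--Pohozaev, D'Ambrosio--Mitidieri and Farina--Serrin arguments cited above: integrate this inequality against a test function $\phi=\beta(w)\,\psi^{p}$, where $\psi$ is the standard cutoff, $\psi\equiv1$ on $B_R$, $\supp\psi\subset B_{2R}$, $|D\psi|\lesssim1/R$, $p$ large, and $\beta$ is an increasing nonlinearity tailored to $F$, chosen so that the ``good'' reaction term $\int\beta(w)\,w^{\omega}(1+r)^{-\mu}|Dw|^{1-\chi}(1+|Dw|^2)^{-1/2}\psi^{p}$ reabsorbs, via Young's inequality, the boundary error coming from $D\psi$. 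The Keller--Osserman integrability \eqref{KO_formean} is precisely what closes the estimate: along a sequence $R_j\to\infty$ realizing the $\liminf$ in \eqref{volgrowth_sigmazero_Linfty_inthetheorem} the error term tends to $0$, forcing $Dw\equiv0$ on the open set $\{w>0\}$. Hence $u$ is locally constant on $\{u>t_0\}$, so by continuity $\{u>t_0\}$ is open and closed; being non-empty it is all of $M$, so $u$ is constant --- contradicting the hypothesis. (Alternatively, one may extract from the same estimate, run on annuli $B_{2R}\setminus B_R$, a bound $u_+=o(r^\sigma)$ with $\sigma>0$ as small as one likes, and then invoke Theorem~\ref{teo_main_2} --- whose hypothesis on $f$ is merely $f\ge C>0$ at infinity --- with $\sigma$ small enough that \eqref{ipo_sigma_teomain2}--\eqref{volgrowth_sigmamagzero} follow from \eqref{volgrowth_sigmazero_Linfty_inthetheorem}, at least when $\mu<\chi+1$.)

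The main obstacle is that, unlike for the Laplacian, the mean curvature operator is not scale invariant and the reaction term $f(u)\,|Du|^{1-\chi}(1+|Du|^2)^{-1/2}$ degenerates both as $|Du|\to0$ and --- for $\chi\in(0,1)$ --- as $|Du|\to\infty$; so there is no clean Keller--Osserman comparison with explicit radial supersolutions on balls, and one cannot, as for $\Delta u\ge u^\omega$, bound $u$ at the centre of $B_R$ by $CR^{-\gamma}$ with no reference to the geometry. The delicate point is the choice of $\beta$ (equivalently, of the ``energy'' built from $F$) taming those degenerate gradient factors uniformly, together with the precise bookkeeping of exponents needed so that the threshold for closing the argument is exactly $\omega>\chi$ and the admissible volume growth is exactly $e^{o(r^{\chi+1-\mu})}$, polynomial when $\mu=\chi+1$ --- matching the thresholds of Theorems~\ref{teo_main_2_wmp} and~\ref{teo_main_2}. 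Once those estimates are in place, everything else is the soft reduction described above.
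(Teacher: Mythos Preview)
The paper is a survey and does \emph{not} prove this statement: Theorem~\ref{teo_main} is merely quoted from \cite[Thm.~10.33]{bmpr}, and the text explicitly says ``for reasons of room, no proofs of the analytic results will be given or sketched''. So there is no proof here to compare against line by line. What the paper does offer is context: it places the result among the integral/test-function methods of Mitidieri--Pohozaev, D'Ambrosio--Mitidieri, Filippucci, Farina--Serrin and Usami (the remarks immediately following the statement), and it contrasts this with the \emph{different} Phr\'agmen--Lindel\"off/comparison approach needed for the more general Theorem~\ref{teo_SMP_SL_mc} (which requires $u\in C^1$ and a Ricci lower bound). Your proposal is consistent with those hints: an integral estimate under a pure volume-growth hypothesis, for $\lip_\loc$ solutions.

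Your reduction is sound. Once $u^*<\infty$ is known, the volume hypothesis \eqref{volgrowth_sigmazero_Linfty_inthetheorem} is exactly \eqref{volgrowth_sigmazero}, so Theorem~\ref{teo_main_2_wmp} yields $f(u^*)\le 0$; and you correctly check that $\omega>\chi$ implies the modified Keller--Osserman condition \eqref{KO_formean}. The Pasting Lemma step reducing to $w=(u-t_0)_+$ is also fine.

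What is missing is the actual proof of the a-priori bound $u^*<\infty$. You describe a test-function argument with $\phi=\beta(w)\psi^p$ and say the choice of $\beta$ ``tailored to $F$'' together with ``precise bookkeeping of exponents'' closes the estimate at the threshold $\omega>\chi$ --- but you do not specify $\beta$, you do not write the inequality that results, and you yourself flag the degeneracy of $|Du|^{1-\chi}/\sqrt{1+|Du|^2}$ at both $|Du|\to 0$ and $|Du|\to\infty$ as ``the main obstacle''. That is precisely the heart of the matter, and it is not done. Your alternative route (extract $u_+=o(r^\sigma)$ on annuli and feed it into Theorem~\ref{teo_main_2}) is a reasonable strategy and closer in spirit to how such results are often organized, but again the estimate producing the growth bound is asserted, not carried out. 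In short: the architecture is right and matches what the survey suggests about the proof in \cite{bmpr}, but the proposal is a plan rather than a proof --- the decisive integral estimate remains to be written down.
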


\begin{remark}
	\emph{Results covering some of the cases of Theorem \ref{teo_main} were obtained, in Euclidean space, for the stronger inequality
		\[
		\diver \left( \frac{Du}{\sqrt{1+|Du|^2}} \right) \ge \frac{f(u)}{(1+r)^\mu}|Du|^{1-\chi} \qquad \text{on } \, \R^m
		\]
		assuming $\omega > \chi$. Among them, we quote \cite[Cor. 1.3,1.4]{filippucci} by Filippucci, for $\chi \in (0,1]$ and $\mu<\chi+1$ under the restriction\footnote{The bound $2 \in (1,m)$ is assumed at \cite[p.2904]{filippucci}.} $m>2$, later improved in \cite[Thm.1]{farinaserrin2} to cover the range\footnote{According to \cite{farinaserrin2}, the case $\mu = \chi+1$ would need a further requirement that, for the mean curvature operator, reads as $2>m$, contradiction.} $\chi \in [0,1]$, $\mu < \chi+1$. Rigidity for $\chi = 1$ and $\mu = \chi+1=2$ on $\R^m$ was conjectured by Mitidieri-Pohozaev in \cite[Sect. 14 Ch. 1]{mitpoho}, and proved by Usami \cite{usami}.
	}
\end{remark}

\begin{remark}[\textbf{Sharpness of $\omega> \chi$}]\label{rem_sharpness_2}
	\emph{We consider the radially symmetric model in Subsection \ref{subsec_WSMP}. Because of the asymptotic growth of $\log |\BB_r|$ in \eqref{crescitevol_counter}, given $\chi \in [0,1]$, the condition
		\[
		\mu < \chi + 1, \qquad \liminf_{r \ra \infty} \frac{\log |\BB_r|}{r^{\chi+1-\mu}} < \infty \quad \text{($=0$ if $\chi=0$)}
		\]
		is equivalent to
		\[
		\alpha > -2, \qquad \left\{ \begin{array}{ll}
		\mu \le \chi - \dfrac{\alpha}{2} & \quad \text{if } \, \chi > 0 \\[0.4cm]
		\mu < \chi - \dfrac{\alpha}{2} & \quad \text{if } \, \chi = 0,
		\end{array}\right.
		\]		
		while, for $\chi \in (0,1]$, condition	
		\[
		\mu = \chi + 1, \qquad \liminf_{r \ra \infty} \frac{\log |\BB_r|}{\log r} < \infty
		\]
		is equivalent to
		\[
		\alpha = -2, \quad \mu = \chi - \dfrac{\alpha}{2}.
		\]
		The case $\chi = 0$ in the second of \eqref{volgrowth_sigmazero_Linfty_inthetheorem} will not be considered in the present counterexample. Keeping this in mind, we prove the optimality of condition $\omega> \chi$. The computations in Subsection 10.4 of \cite{bmpr} (cf. also Remark 10.29 therein) show that the radially symmetric function constructed in Remark \ref{rem_sharpness}, that is positive and coincides with $r^\sigma$ for large $r$, solves
		\begin{equation}\label{asd_2}
		\diver \left( \frac{Du}{\sqrt{1+|Du|^2}} \right) \gtrsim \frac{u^\omega}{(1+r)^\mu}\frac{|Du|^{1-\chi}}{\sqrt{1+|Du|^2}} \qquad \text{on } \, M
		\end{equation}
		provided that $\chi \in [0,1]$ and either
		\begin{equation}\label{bonito_mc}
		\left\{ \begin{array}{lll}
		\omega< \chi, & \quad \text{for each} & \mu \le \chi - \dfrac{\alpha}{2}, \quad \text{or } \\[0.3cm]
		\omega = \chi & \quad \text{and} & \mu = \chi- \dfrac{\alpha}{2},
		\end{array}\right.
		\end{equation}
		Hence, $\omega > \chi$ is necessary for the conclusion in Theorem \ref{teo_main}. The last restriction in the second of \eqref{bonito_mc} is also optimal: in the ``Euclidean setting" $\alpha = -2$, if $\omega = \chi$ and $\mu < \chi+1$ then entire solutions of \eqref{asd_2}, with the equality sign, are constant if they have polynomial growth, see \cite[Thm. 12]{farinaserrin2} and also Example 4 at p. 4402 therein.
	}
\end{remark}

To extend the above theorem to more general $f$ satisfying the Keller-Osserman condition \eqref{KO_formean}, we had to use a different approach inspired by the classical Phr\'agmen-Lindel\"off method, see \cite[sec. 10.3]{bmpr}: under the validity of \eqref{KO_formean}, the key point is to construct a suitable family of radial blowing-up solutions $\{w_\eps\}_\eps$ of
\[
\diver \left( \frac{Dw_\eps}{\sqrt{1+|Dw|^2}} \right) \le \frac{f(w_\eps)}{(1+r)^\mu}\frac{|Dw_\eps|^{1-\chi}}{\sqrt{1+|Dw_\eps|^2}} \qquad \text{on } \, M
\]
that locally converge to a constant function as $\eps \ra 0$. The family $\{w_\eps\}_{\eps}$ is used to constrain $u$ via the use of a comparison principle. The presence of the gradient term in the RHS of \eqref{eq_PMC} requires to control the gradient of $u$ and $w_\eps$ in a neighbourhood of the set of maxima of $u-w_\eps$, and therefore we need to restrict ourselves to $u \in C^1(M)$.\par
The construction of $w_\eps$ is rather delicate, especially in the effort to avoid unnecessary technical conditions. It requires to estimate $\Delta r$ from above, and therefore, by comparison theory, it needs a  control on the Ricci curvature from below. We require the same bounds as those in \eqref{teo_SMP_intro}, and prove the following result that, because of the discussion at the end of Subsection~\ref{subsec_WSMP}, well fits 
with Theorem~\ref{teo_main}.

\begin{theorem}[\cite{bmpr}, Thm. 10.26]\label{teo_SMP_SL_mc}
	Let $M$ be a complete $m$-dimensional manifold satisfying
	\[
	\Ricc (\nabla r, \nabla r) \ge -(m-1)\kappa^2\big( 1+r^2\big)^{\alpha/2} \qquad \text{on } \, M \backslash \cut(o),
	\]
	for constants $\kappa \ge 0, \alpha \ge -2$, with $r$ the distance from a fixed origin $o$. Choose
	\begin{equation}\label{mualphachi12_mc}
	\chi \in (0,1], \qquad \mu \le \chi - \frac{\alpha}{2},
	\end{equation}
	and let $f \in C(\R)$ be non-decreasing. Then, under the validity of the Keller-Osserman condition
	\begin{equation}\label{KO_mc}
	F^{-\frac{1}{\chi+1}} \in L^1(\infty),
	\end{equation}
	with $F$ as in \eqref{def_F}, any non-constant solution $u \in C^1(M)$ of
	\[
	\diver \left( \frac{Du}{\sqrt{1+|Du|^2}} \right) \ge \frac{f(u)}{(1+r)^\mu}\frac{|Du|^{1-\chi}}{\sqrt{1+|Du|^2}} \qquad \text{on } \, M
	\]
	is bounded above and satisfies $f(u^*) \le 0$.
\end{theorem}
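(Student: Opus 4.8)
The plan is to run a Phragmén--Lindel\"off argument based on Keller--Osserman barriers: I would bound $u$ from above by a family of radial supersolutions that blow up on the boundary of a geodesic ball centred at $o$ and degenerate to a constant as a parameter tends to zero. \emph{Reduction.} Since \eqref{KO_mc} forces $F(t)\to+\infty$ as $t\to+\infty$, the non-decreasing function $f$ is eventually positive; put $t_0:=\sup\{t\in\R:\ f(t)\le 0\}<\infty$, so that $f(\gamma)>0$ precisely when $\gamma>t_0$. It suffices to prove that $u\le\gamma$ on $M$ for every such $\gamma$: then $u^*\le t_0<\infty$ and, by monotonicity and continuity of $f$, $f(u^*)\le f(t_0)\le 0$. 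Fix $\gamma>t_0$ and set $\Omega_\gamma:=\{u>\gamma\}$; on $\Omega_\gamma$ one has $f(u)\ge f(\gamma)>0$, hence $\diver\big(Du/\sqrt{1+|Du|^2}\big)\ge 0$, i.e.\ $u$ is $\varphi$-subharmonic there, with $\varphi(t)=t/\sqrt{1+t^2}$. Every bounded connected component $V$ of $\Omega_\gamma$ would have $\partial V\subset\partial\Omega_\gamma$, hence $u\equiv\gamma$ on $\partial V$, and the maximum principle for $\varphi$-subharmonic functions would give $u\le\gamma$ on $\overline V$, a contradiction; so it remains to rule out the case in which $\Omega_\gamma$ is nonempty with only unbounded components.

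\emph{Barriers.} This is the heart of the matter. The bound $\Ricc(\nabla r,\nabla r)\ge-(m-1)\kappa^2(1+r^2)^{\alpha/2}$ yields, via Laplacian comparison, $\Delta r\le(m-1)g'(r)/g(r)$ on $M\setminus\cut(o)$ (in the weak sense across $\cut(o)$), so that a radial, non-decreasing function $w=z(r(x))$ is a weak supersolution of $\diver\big(Dw/\sqrt{1+|Dw|^2}\big)\le \frac{f(w)}{(1+r)^\mu}\frac{|Dw|^{1-\chi}}{\sqrt{1+|Dw|^2}}$ on $M$ whenever $z$ is a supersolution of the associated radial ODE driven by the model operator $z\mapsto g^{1-m}\big(g^{m-1}\varphi(z')\big)'$. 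I would then construct, for each small $\eps>0$, such a $z=z_\eps$ that is increasing, stays $\ge\gamma+\eps$, solves the ODE with a \emph{strict} margin, blows up at a finite radius $R_\eps$, and satisfies $R_\eps\to+\infty$ and $z_\eps\to\gamma$ locally uniformly as $\eps\to 0$ (for $\chi=1$ one may take $z_\eps$ issued from $r=0$, while for $\chi\in(0,1)$ the supersolution inequality degenerates where $z'=0$ and one works on thin annuli about $o$). The finiteness of $R_\eps$ is exactly the content of the Keller--Osserman condition \eqref{KO_mc} read with the modified pre-Legendre transform $K(t)=t^{\chi+1}/(\chi+1)$ of the mean curvature operator; the restriction \eqref{mualphachi12_mc} is what keeps the drift terms coming from $g'/g\sim r^{\alpha/2}$ and from the weight $(1+r)^{-\mu}$ subordinate to this blow-up; and $R_\eps\to+\infty$ together with $z_\eps\to\gamma$ follows from continuous dependence of $z_\eps$ on its initial datum, whose $\eps\to0$ limit is the equilibrium $z\equiv\gamma$. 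I expect this ODE construction --- getting blow-up exactly at the stated thresholds while arranging a strict margin and avoiding spurious regularity and curvature hypotheses --- to be the main obstacle.

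\emph{Comparison and conclusion.} Set $w_\eps:=z_\eps(r(\cdot))$ and compare $u$ with $w_\eps$ on $\Omega_\gamma\cap B_{R_\eps}(o)$. Near the boundary $u-w_\eps\le 0$: it tends to $-\infty$ on $\{r=R_\eps\}$ since $w_\eps$ blows up, and it is $\le\gamma-(\gamma+\eps)<0$ on $\partial\Omega_\gamma$, while $u$ is bounded on compacta. Hence, if $u-w_\eps$ were positive somewhere it would attain a positive interior maximum at some $\bar x$ with $u(\bar x)>w_\eps(\bar x)$ and, since $u,w_\eps\in C^1$, $Du(\bar x)=Dw_\eps(\bar x)$. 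Testing the differential inequalities for $u$ and $w_\eps$ against $(u-w_\eps-c)_+$ for $c$ just below the maximum --- so the test function is supported in a small relatively compact neighbourhood of the maximum set, where $|Du-Dw_\eps|$ is small by $C^1$-regularity --- and using the monotonicity of $p\mapsto p/\sqrt{1+|p|^2}$, the fact that $f$ is non-decreasing with $w_\eps(\bar x)<u(\bar x)$, and the strict margin of $w_\eps$, one reaches a contradiction; it is precisely the factor $|Du|^{1-\chi}/\sqrt{1+|Du|^2}$ on the right-hand side, which must be controlled near the maximum by matching $Du$ with $Dw_\eps$, that forces the hypothesis $u\in C^1(M)$. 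Therefore $u\le w_\eps$ on $\Omega_\gamma\cap B_{R_\eps}(o)$. Fixing $R_0$ and letting $\eps\to0$ (so that eventually $R_\eps>R_0$ and $w_\eps\to\gamma$ uniformly on $B_{R_0}(o)$) gives $u\le\gamma$ on $\Omega_\gamma\cap B_{R_0}(o)$, incompatible with $u>\gamma$ on $\Omega_\gamma$ unless $\Omega_\gamma=\emptyset$. Hence $u\le\gamma$ on $M$; as $\gamma>t_0$ was arbitrary, $u\le t_0$, so $u$ is bounded above and $f(u^*)\le 0$.
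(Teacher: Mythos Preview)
Your proposal is correct and follows essentially the same approach the paper describes: a Phragm\'en--Lindel\"off argument based on a family of radial blow-up supersolutions $w_\eps$ built via Laplacian comparison from the Ricci lower bound, converging locally to a constant as $\eps\to 0$, combined with a comparison principle whose gradient-matching step at the interior maximum explains the $C^1$ hypothesis. You have also correctly identified the roles of the Keller--Osserman condition \eqref{KO_mc} (finite blow-up radius), of the parameter restriction \eqref{mualphachi12_mc} (controlling the drift $g'/g$ against the weight $(1+r)^{-\mu}$), and the degeneracy issue at $z'=0$ when $\chi<1$; these are precisely the delicate points flagged in the paper.
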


\subsection{Application: the capillarity equation}

Applying Theorem \ref{teo_main} to the capillarity problem, we deduce the following corollary. To our knowledge, this is the first result on the capillarity equation that allows $M$ to have geodesic balls with growth of their volume faster than polynomial.

\begin{theorem}\label{teo_capillarity_pre}
	Let $M$ be complete. For a fixed origin $o$, denote with $r(x) = \mathrm{dist}(x,o)$ and with $B_r$ the geodesic ball centered at $o$. Let $\kappa \in C(M)$ satisfy
	\begin{equation}\label{ipo_kappa_pre}
	\kappa(x) \gtrsim \big( 1+ r(x)\big)^{-\mu} \qquad \text{for all } \, x \in M,
	\end{equation}
	for some $\mu < 2$. If there exists $\eps>0$ such that
	\[
	\liminf_{r \ra \infty} \frac{\log |B_r|}{r^{2-\eps-\mu}} < \infty,
	\]
	then the only solution of the capillarity equation
	\[
	\diver \left( \frac{D u}{\sqrt{1+|D u|^2}} \right) = \kappa(x)u \qquad \text{on } \, M
	\]
	is $u \equiv 0$.
\end{theorem}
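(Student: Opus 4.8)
The plan is to recast the capillarity equation as a differential inequality of the kind covered by Theorem~\ref{teo_main}, and then apply that theorem to the positive parts of $u$ and of $-u$. Arguing by contradiction, suppose $u \not\equiv 0$. Since $-u$ solves the same equation — indeed $D(-u) = -Du$ and $|D(-u)| = |Du|$ give $\diver\bigl(D(-u)/\sqrt{1+|D(-u)|^2}\bigr) = -\kappa(x)u = \kappa(x)(-u)$ — we may assume, replacing $u$ by $-u$ if needed, that $u^* := \sup_M u > 0$; by elliptic regularity $u \in \lip_\loc(M)$, so that $u_+ := \max\{u,0\}\in\lip_\loc(M)$ is non-constant with $(u_+)^* = u^* > 0$.

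Next I would fix the parameters. Since $\mu < 2$, I may shrink the $\eps$ of the hypothesis so that $\eps \in (0,\min\{1,2-\mu\})$: this is harmless, because for $\eps' \le \eps$ one has $r^{2-\eps'-\mu}\ge r^{2-\eps-\mu}$ as soon as $r\ge 1$, so the volume bound for $\eps$ forces the one for any smaller $\eps'$. Put $\chi := 1-\eps \in (0,1)$; then $\mu < 2-\eps = \chi+1$ and $\chi+1-\mu = 2-\eps-\mu > 0$, and the constant $C_\chi := \sup_{t>0} t^{1-\chi}/\sqrt{1+t^2}$ is finite because $\chi\in(0,1)$. Fix $c_0>0$ with $\kappa(x)\ge c_0(1+r(x))^{-\mu}$ by \eqref{ipo_kappa_pre}. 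On $\{u>0\}$ we have $\diver\bigl(Du/\sqrt{1+|Du|^2}\bigr) = \kappa(x)u \ge c_0 u(1+r)^{-\mu}$, and since $u>0$ and $|Du|^{1-\chi}/\sqrt{1+|Du|^2}\le C_\chi$,
\[
\diver \left( \frac{D u}{\sqrt{1+|D u|^2}} \right) \ \ge\ \frac{c_0}{C_\chi}\,\frac{u}{(1+r)^\mu}\,\frac{|Du|^{1-\chi}}{\sqrt{1+|Du|^2}} \qquad \text{on } \{u>0\}.
\]
By the Pasting Lemma (cf.\ \cite{dambrosiomitidieri_2}), exactly as in the derivation of \eqref{eq_capi} and using that $f(t) := (c_0/C_\chi)t$ vanishes at $0$, the function $u_+$ satisfies weakly on all of $M$
\[
\diver \left( \frac{D u_+}{\sqrt{1+|D u_+|^2}} \right) \ \ge\ \frac{f(u_+)}{(1+r)^\mu}\,\frac{|Du_+|^{1-\chi}}{\sqrt{1+|Du_+|^2}} \qquad \text{on } M.
\]

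Now I would invoke Theorem~\ref{teo_main}: here $\chi\in[0,1]$ and $\mu<\chi+1$, the function $f(t) = (c_0/C_\chi)t$ satisfies $f(t)\ge (c_0/C_\chi)t^{\omega}$ for $t\gg1$ with $\omega = 1 > \chi$, and $\liminf_{r\to\infty}\log|B_r|/r^{\chi+1-\mu} = \liminf_{r\to\infty}\log|B_r|/r^{2-\eps-\mu} < \infty$ by assumption. Since $u_+$ is non-constant, the theorem gives $(u_+)^* < \infty$ and $f\bigl((u_+)^*\bigr) = (c_0/C_\chi)(u_+)^* \le 0$, i.e.\ $(u_+)^* \le 0$, contradicting $(u_+)^* = u^* > 0$. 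Hence $u \equiv 0$.

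The substantive content is entirely borrowed from Theorem~\ref{teo_main}; the work here is the bookkeeping that reconciles the single assumption $\mu<2$, together with the prescribed volume growth, with its parameter constraints. This goes through precisely because the forbidden endpoint $\chi = 1$ — which would demand a superlinear $f$ that the linear right-hand side $\kappa(x)u$ cannot provide — is avoided, while $\chi+1$ may be taken arbitrarily close to $2$; and because on $\{u>0\}$, inserting the factor $|Du|^{1-\chi}/\sqrt{1+|Du|^2}\in[0,C_\chi]$ in front of the \emph{positive} quantity $u$ produces an inequality of the right direction, after which the Pasting Lemma handles the gluing across $\{u=0\}$. I expect this last point — keeping the signs straight, so that it is $u_+$ rather than $u$ that is a global subsolution — to be the only place where genuine care is required.
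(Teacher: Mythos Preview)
Your proof is correct and follows essentially the same route as the paper's: both set $\chi=1-\eps\in(0,1)$ so that the volume hypothesis matches the first alternative in Theorem~\ref{teo_main}, insert the bounded factor $|Du|^{1-\chi}/\sqrt{1+|Du|^2}$ on $\{u>0\}$, pass to $u_+$ via the Pasting Lemma, and invoke Theorem~\ref{teo_main} with $\omega=1>\chi$. Your version is slightly more explicit about why shrinking $\eps$ is harmless and why the endpoint $\chi=1$ must be avoided, but the argument is the same.
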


\begin{proof}[Proof: sketch]
	Evidently, $u \equiv 0$ is the only constant solution. Assume the existence of a non-constant solution $u$. Up  replacing $u$ with $-u$, we can assume that $\{u>0\} \neq \emptyset$. By assumption, there exists $\chi < 1$ sufficiently close to $1$ in such a way that
	\[
	\liminf_{r \ra \infty} \frac{\log |B_r|}{r^{1+\chi-\mu}} < \infty.
	\]
	The capillarity equation implies
	\[
	\diver \left( \frac{D u}{\sqrt{1+|D u|^2}} \right) = \kappa(x)u \gtrsim \frac{u}{(1+r)^\mu} \frac{|Du|^{1-\chi}}{\sqrt{1+|Du|^2}} \qquad \text{on } \, \{u > 0\}.
	\]
	Therefore, $u_+ : = \max\{u,0\}$ is a non-constant solution of
	\[
	\diver \left( \frac{D u_+}{\sqrt{1+|D u_+|^2}} \right) \gtrsim \frac{u_+}{(1+r)^\mu} \frac{|Du_+|^{1-\chi}}{\sqrt{1+|Du_+|^2}} \qquad \text{on } \, M,
	\]
	where we used that $t^{1-\chi}/\sqrt{1+t^2}$ is bounded on $\R$. The inclusion of the ``artificial" gradient term enables the function $f(u_+) = u_+$ to satisfy the Keller-Osserman condition $1 > \chi$, and therefore we can apply Theorem \ref{teo_main} to obtain $u_+^* \le 0$, contradiction.
\end{proof}

\section{Gradient estimates and Bernstein theorems in $\R \times M$}\label{sec_gradient}

As we saw above, under general assumptions on $b,f,\ell$ solutions of 
\[
\diver \left( \frac{Du}{\sqrt{1+ |Du|^2}}\right) = b(x)f(u)\ell(|Du|) \qquad \text{on } \, M
\]
indeed satisfy $f(u) \equiv 0$, in particular 
\begin{equation}\label{eq_minimal_Rm_2}
\diver \left( \frac{Du}{\sqrt{1+ |Du|^2}}\right) = 0 \qquad \text{on } \, M.
\end{equation}
However, to infer the constancy of $u$ solving \eqref{eq_minimal_Rm_2}, one needs rather different arguments (and more binding assumptions) than those leading to the results in the previous section; indeed, while the above theorems apply to differential \emph{inequalities}, those in this section are very specific to the equality case, unless in the special situation when $M$ has slow volume growth. This consideration is not surprising, as it parallels the case of harmonic functions: positive solutions of $\Delta u = 0$ are constant on each complete manifold with $\Ricc \ge 0$ by \cite{yau,chengyau}, while the constancy of every positive solution of $\Delta u \le 0$ is equivalent to the parabolicity of $M$ (cf. \cite{schoenyau, grigoryan}) that, for complete manifolds with $\Ricc \ge 0$, is equivalent to the slow volume growth condition 
\begin{equation}\label{eq_varopoulos}
\int^{\infty} \frac{s \di s}{|B_s|} = \infty.
\end{equation}
As a matter of fact, \eqref{eq_varopoulos} is sufficient for the parabolicity of a complete manifold $M$, regardless to any curvature condition (cf. \cite[Thm. 7.5]{grigoryan}). The next example is instructive:
\begin{example}
	Consider the warped product manifold 
	\[
	M = \R \times_\xi N, \qquad \text{with $N$ compact and } \, \xi(t) = \sqrt{t^2+2}.
	\]
	Let $\varphi(t) = t/\sqrt{1+t^2}$. Then, a direct check shows that 
	\[
	u(t,y) = \int_0^t \varphi^{-1} \left( \frac{1}{\xi(\tau)^{m-1}} \right) \di \tau
	\]
	is well defined on $M$ (since $\xi > 1$ on $\R$) and solves \eqref{eq_minimal_Rm}. Denoting with $r$ the distance from an origin $(0,y)$, observe that
	\begin{equation}\label{example_mingraph}
	\Sect \ge -\frac{C}{1+r^2}, \qquad |B_r| \le C r^{m}.
	\end{equation}
	for some constant $C>0$. Furthermore, $u$ is bounded on $M$ if and only if $m \ge 3$. Therefore, if $m \ge 3$, assumptions \eqref{example_mingraph} are not sufficient to deduce the constancy of positive solutions of \eqref{eq_minimal_Rm_2}, not even of bounded ones. As shown in \cite{chh_nonexistence}, if the first in \eqref{example_mingraph} is strengthened to $M$ having asymptotically non-negative Sectional curvature, that is, if 
	\begin{equation}\label{asnonneg_sectional}
	\Sect \ge - G(r) \qquad \text{with $0 < G \in C(\R^+_0)$ satisfying} \qquad \int^{+\infty} r G(r) \di r < \infty, 
	\end{equation}	
	and if $M$ has only one end, then positive solutions of \eqref{eq_minimal_Rm_2} that are bounded on one side are constant. 	
\end{example}

The situation for manifolds with slow volume growth, including the above example when $m =2$, is quite rigid. Suppose that $u$ is a positive solution of \eqref{eq_minimal_Rm_2} on $M$, and assume that $M$ satisfies \eqref{eq_varopoulos}. Let $g$ be the metric on the graph $\Sigma$. By a calibration argument that can be found in \cite{liwang_mini}, the volume of the portion of $(\Sigma, g)$ inside an ambient geodesic ball $\BB_r \subset M \times \R$ centered at $(o, u(o)) $ satisfies
\begin{equation}\label{eq_transplant_volume}
|\Sigma \cap \BB_r| \le |B_r| + \frac{1}{2} |B_{2r}\backslash B_r| \le 2|B_{2r}|,	
\end{equation}
with $B_s$ the geodesic ball of radius $s$ in $(M, \sigma)$ centered at $o$. In particular, since the geodesic ball $B_r^g$ in $(\Sigma, g)$ is contained in $\Sigma \cap \BB_r$, its volume satisfies \eqref{eq_varopoulos}. Hence, $\Sigma$ is parabolic. The constancy of $u$ follows since $u>0$ solves $\Delta_g u = 0$ on $\Sigma$, with $\Delta_g$ the Laplace-Beltrami operator of $g$. Summarizing, 
\[
\begin{array}{l}
\text{A complete manifold with slow volume growth, in the sense of \eqref{eq_varopoulos}, satisfies \eqref{bernstein_2}:}\\
\text{entire positive minimal graphs over $M$ are constant.}
\end{array}
\] 		
Inequality \eqref{eq_transplant_volume} allows to ``transplant" a volume condition on $M$ to a corresponding one on $\Sigma$, in particular condition \eqref{eq_varopoulos} that suffices for parabolicity, without any a-priori bounds on $u$. A natural question that arises is whether there is a way to transplant the parabolicity itself from $M$ to $\Sigma$. Currently, we are not aware of any example of entire, minimal graph over a complete, parabolic manifold that is not parabolic.


If the volume growth of $(M,\sigma)$ does not satisfy \eqref{eq_varopoulos}, the main tool to prove the constancy of solutions of \eqref{eq_minimal_Rm_2} is a local gradient bound for positive solutions $u : B_r \subset M \ra \R^+$, as in \cite{bdgm}. However, the possible lack of a Sobolev inequality on manifolds satisfying $\Ricc \ge -(m-1)\kappa^2$ (or $\Sect \ge - \kappa^2$) forced to devise a method different from the one in \cite{bdgm}. Typically, in a manifold setting the estimate is achieved by refining the classical idea of Korevaar \cite{korevaar}, as for instance in \cite{rosenbergschulzespruck}. There, the authors inferred the following bound for positive solutions $u : B_r \subset M \ra \R$ under the requirements that $\Ricc \ge 0$ and $\Sect \ge - \kappa^2$ on $B_r$:
\[
|Du(x)| \le c_1\exp\left\{ c_2[1 + \kappa r \coth(\kappa r)] \frac{u^2(x)}{r^2} \right\}, 
\]
for some $c_j=c_j(m)$. Further improvements can be found in \cite{dl_bounded,dl_unbounded,chh_nonexistence}. The proof of the constancy of $u$ proceeds by first showing the global bound $|Du| \le C$ on $M$, for some constant $C>0$, that allows to interpret the mean curvature operator as a uniformly elliptic linear operator on $(M,\sigma)$ acting on $u$. At this stage, for instance, the quadratic lower bound \eqref{example_mingraph} on $\Sect$ suffices to show $|Du| \le C$ for every positive minimal graph with at most linear growth, \cite{chh_nonexistence}. Next, assumptions like 
\[
\left\{\begin{array}{l}
\Ricc \ge 0, \quad \text{or} \\
\Sect \ \text{ satisfying \eqref{asnonneg_sectional}, and $M$ possessing only one end, as in \cite{chh_nonexistence}}
\end{array}\right.
\]
serve to guarantee the validity of the global Harnack inequality $\sup_M u \le C' \inf_M u$, from which the constancy of $u$ follows. We mention that, recently, in \cite{ding} the author proved Theorem \ref{teo_B2} on each manifold supporting global doubling and a weak Neumann-Poincar\'e inequalities, a class that includes manifolds with $\Ricc \ge 0$. His idea is to avoid the need of gradient estimates by transplanting the doubling and weak Neumann-Poincar\'e inequalities (the latter, only for suitable functions) from $M$ to the graph $\Sigma$, and then proving the Harnack inequality for the harmonic function $u$ on $\Sigma$.\par

\subsection{Ricci curvature and gradient estimate for minimal graphs}	

Our approach in \cite{cmmr} to obtain Theorem \ref{teo_B2} still relies on a gradient estimate. Precisely, we prove 

\begin{theorem}[\cite{cmmr}]\label{teo_graph_soloricci_intro}
	Let $M$ be a complete manifold of dimension $m \ge 2$ satisfying 
	\begin{equation}\label{eq_lowerricci}
	\Ricc \ge - (m-1)\kappa^2, 
	\end{equation}
	for some constant $\kappa \ge 0$. Let $\Omega \subset M$ be an open subset and let $u \in C^\infty(\Omega)$ be a positive solution of \eqref{eq_minimal_Rm_2} on $\Omega$. If either
	\begin{itemize}
		\item[$(i)$] $\Omega$ has locally Lipschitz boundary and 
		\[
		\liminf_{r \ra \infty} \frac{\log |\partial \Omega \cap B_r|}{r^2} < \infty, \qquad \text{or}
		\]
		\item[$(ii)$] $u \in C(\overline \Omega)$ and is constant on $\partial \Omega$.
	\end{itemize}
	Then 
	\begin{equation}\label{eq_bellissima}
	\frac{\sqrt{1+|Du|^2}}{e^{\kappa u\sqrt{m-1}}} \leq  \max\left\{ 1, \limsup_{x \ra \partial \Omega} \frac{\sqrt{1+|Du(x)|^2}}{e^{\kappa u(x)\sqrt{m-1}}}\right\} \qquad \text{on } \, \Omega.
	\end{equation}
	In the particular case $\Omega = M$, 
	\begin{equation}\label{eq_entire}
	\sqrt{1+|Du|^2} \le e^{\kappa u\sqrt{m-1}} \qquad \text{on } \, M.
	\end{equation}
	If equality holds in \eqref{eq_entire} at some point, then $\kappa = 0$ and $u$ is constant. 	
\end{theorem}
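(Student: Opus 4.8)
The plan is to establish the gradient bound via a Bochner-type argument on the minimal graph $\Sigma$, working with the function $w = \log\sqrt{1+|Du|^2}$, which has a natural geometric meaning: if $N$ denotes the upward unit normal to $\Sigma$ and $E = \partial_s$ the vertical vector field on $\R \times M$, then $\langle N, E\rangle = (1+|Du|^2)^{-1/2}$, so $w = -\log\langle N, E\rangle$. The starting point is the classical Jacobi-field equation for the ``vertical'' component along a minimal hypersurface: the function $\vartheta := \langle N, E\rangle$ satisfies $\Delta_g \vartheta = -(|A|^2 + \overline{\Ric}(N,N))\vartheta$, where $\Delta_g$ is the Laplace–Beltrami operator of the induced metric $g$, $A$ is the second fundamental form of $\Sigma$, and $\overline{\Ric}$ is the Ricci curvature of the ambient product $\R\times M$. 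Since the ambient metric is a Riemannian product, $\overline{\Ric}(N,N) = \Ric_M(N^\top, N^\top)$, where $N^\top$ is the projection of $N$ onto $TM$; using $\Ric_M \ge -(m-1)\kappa^2$ and $|N^\top|^2 = 1-\vartheta^2 = |Du|^2/(1+|Du|^2)$, this gives $\overline{\Ric}(N,N) \ge -(m-1)\kappa^2(1-\vartheta^2)$.

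From here I would compute $\Delta_g w = \Delta_g(-\log\vartheta) = -\vartheta^{-1}\Delta_g\vartheta + \vartheta^{-2}|\nabla_g\vartheta|^2 = |A|^2 + \overline{\Ric}(N,N) + |\nabla_g w|^2$. The key algebraic fact is the Kato-type inequality $|A|^2 \ge |\nabla_g w|^2$, which holds on any minimal graph: indeed $\nabla_g \vartheta$ can be expressed through $A$ and the tangential part of $E$, and one gets $|\nabla_g\vartheta|^2 \le \vartheta^2|A|^2(1-\vartheta^2) \le \vartheta^2 |A|^2$, hence $|\nabla_g w|^2 \le |A|^2$. Combining, $\Delta_g w \ge 2|\nabla_g w|^2 + \overline{\Ric}(N,N) \ge 2|\nabla_g w|^2 - (m-1)\kappa^2$. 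On the other hand, writing everything in terms of $u$ (which is harmonic on $(\Sigma,g)$, i.e. $\Delta_g u = 0$) and letting $\psi = w - \kappa u\sqrt{m-1}$, a direct computation using $\Delta_g u = 0$ and the bound above should yield $\Delta_g\psi \ge 2\langle\nabla_g w, \nabla_g\psi\rangle$ wherever $\psi$ is near its supremum — more precisely, $\psi$ satisfies a differential inequality $\Delta_g\psi - \langle b, \nabla_g\psi\rangle \ge 0$ for a suitable locally bounded vector field $b$, so $\psi$ is a subsolution of a linear elliptic operator with no zeroth-order term.

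Once $\psi$ is a subsolution in this sense, the maximum principle gives the conclusion. In case $(ii)$, where $u$ is continuous up to $\partial\Omega$ and constant there, $\psi$ extends continuously to $\overline\Omega$ (using interior gradient control away from $\partial\Omega$ and the hypothesis), and the classical maximum principle on the relatively compact exhaustion, combined with the boundary behaviour, yields $\sup_\Omega\psi \le \max\{0,\, \limsup_{x\to\partial\Omega}\psi(x)\}$, which is exactly \eqref{eq_bellissima} after exponentiating (the constant $1$ on the right accounting for the case $\psi \le 0$ identically, i.e. the value at a would-be interior maximum of a subsolution is controlled by the boundary). In case $(i)$, $\Sigma$ need not be compact and $u$ need not be bounded, so instead of the plain maximum principle I would invoke a maximum principle at infinity on $(\Sigma,g)$: the volume estimate \eqref{eq_transplant_volume}, $|\Sigma\cap\BB_r|\le 2|B_{2r}|$, transplants the hypothesis $\liminf r^{-2}\log|\partial\Omega\cap B_r|<\infty$ (together with a volume bound on $M$ that is implicit from $\Ric\ge-(m-1)\kappa^2$ via Bishop–Gromov) into a volume growth condition on geodesic balls of $\Sigma$ that, by Theorem~\ref{teo_main_2_wmp} applied on $\Sigma$ (with $\chi=1$, $\mu=0$), guarantees the $\wmp$ there; applying it to the bounded-above subsolution $\psi$ forces $\sup_\Sigma\psi$ to be controlled by the boundary data. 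For $\Omega=M$ there is no boundary, $\psi$ is bounded above, and the $\wmp$ forces $\psi\le 0$ directly, giving \eqref{eq_entire}.

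Finally, for the rigidity statement: if equality holds in \eqref{eq_entire} at an interior point, then $\psi$ attains its supremum $0$ at that point. Since $\psi$ is a subsolution of a linear elliptic operator without zeroth-order term on the connected manifold $M$, the strong maximum principle forces $\psi\equiv 0$, hence $w = \kappa u\sqrt{m-1}$ everywhere. Tracing back through the differential inequality, $\psi\equiv\mathrm{const}$ forces all the inequalities used to be equalities: in particular $|\nabla_g w|^2 = |A|^2$ and $\overline{\Ric}(N,N) = -(m-1)\kappa^2(1-\vartheta^2)$, and the chain $\Delta_g w \ge 2|\nabla_g w|^2 - (m-1)\kappa^2$ collapses. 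Examining the equality case of the Kato inequality together with $\nabla_g u \ne 0$ (which would hold if $\kappa>0$, since then $w$ is non-constant, contradicting $\psi\equiv 0$) yields a contradiction unless $\kappa=0$; and $\kappa=0$ with $\psi\equiv 0$ gives $w\equiv 0$, i.e. $|Du|\equiv 0$, so $u$ is constant.

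\textbf{Main obstacle.} The delicate point is the analytic infrastructure in case $(i)$: justifying that $\psi$ is an admissible test function for the weak maximum principle at infinity on $(\Sigma,g)$ when $u$ (hence $\psi$) may be unbounded, controlling the near-boundary behaviour of $\psi$ under only a Lipschitz hypothesis on $\partial\Omega$, and checking that the transplanted volume growth genuinely meets the hypothesis of Theorem~\ref{teo_main_2_wmp} rather than just Theorem~\ref{teo_main_2}. The Bochner computation and the Kato inequality $|A|^2\ge|\nabla_g w|^2$ are, by contrast, essentially routine once set up in the right coordinates.
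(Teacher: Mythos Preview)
Your central computational step contains an error that derails the argument. You claim $|\nabla_g\vartheta|^2 \le \vartheta^2|A|^2(1-\vartheta^2)$, but the correct bound is $|\nabla_g\vartheta|^2 \le |A|^2(1-\vartheta^2)$, with no extra $\vartheta^2$ factor: since $\nabla_X\vartheta = -A(X,E^\top)$ one gets $|\nabla_g\vartheta|^2 \le |A|^2|E^\top|^2 = |A|^2(1-\vartheta^2)$. Dividing by $\vartheta^2$ yields $|\nabla_g w|^2 \le |A|^2(W^2-1)$, which \emph{exceeds} $|A|^2$ whenever $W>\sqrt{2}$. So the ``Kato-type inequality $|A|^2 \ge |\nabla_g w|^2$'' is false in the regime where the gradient is large, precisely where one needs it. The paper is explicit on this point: after deriving $\LL_g z \ge \|\II_\Sigma\|^2 z$ for $z = We^{-Cu}$, it remarks that, unlike the harmonic-function case where a refined Kato inequality produces a Keller--Osserman type nonlinearity, here it seems difficult to relate $\|\II_\Sigma\|^2$ to $z$ so as to force a useful growth, and for this reason it \emph{does not} proceed via a direct maximum principle on $z$ or on $\psi = \log z$.

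There is a second, independent gap. Even granting a clean inequality of the form $\LL\psi \ge 0$, invoking the weak maximum principle at infinity on $(\Sigma,g)$ to conclude $\sup\psi \le$ boundary data presumes that $\psi$ is bounded above, which is exactly the statement you are trying to prove (and an inequality $\LL\psi \ge 0$ with no zeroth-order term yields nothing from $\wmp$ anyway, since one would need a strictly positive right-hand side on an upper level set). The paper circumvents both issues simultaneously by Korevaar's localization: it replaces $z$ by $z_0 = W(e^{-Cu-\eps\varrho}-\delta)$, where $\varrho$ is an exhaustion on a complete auxiliary manifold $(N,h)\supset\Omega''$ satisfying $\Delta_g\varrho \le 1 - \|\nabla\varrho\|^2$. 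The existence of such $\varrho$ is the heart of the matter and is obtained by the Ahlfors--Khas'minskii duality: the transplanted volume bound makes $(N,h)$ stochastically complete, which is \emph{equivalent} to the existence of a Khas'minskii potential $v$ with $\Delta_h v \le v$, and one takes $\varrho = \log v$. Positivity of $u$ then forces $\{z_0>0\}$ to be relatively compact, so $z_0$ attains an interior maximum, and the computation $\LL_g z_0 > 0$ on the upper level set $\Omega'$ produces a genuine contradiction. The role of hypotheses $(i)$--$(ii)$ is precisely to feed the transplanted volume growth needed for stochastic completeness of $(N,h)$; your direct appeal to Theorem~\ref{teo_main_2_wmp} on $\Sigma$ skips the construction of $(N,h)$ and of $\varrho$, which is where the work lies.
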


Despite we found no explicit example, we feel likely that the bound \eqref{eq_entire} be sharp also for $\kappa > 0$, in the sense that the constant $\kappa \sqrt{m-1}$ cannot be improved. Our estimate should be compared to the one for positive harmonic functions on manifolds satisfying \eqref{eq_lowerricci}, obtained by P. Li and J. Wang \cite{liwang} by refining Cheng-Yau's argument:
\begin{equation}\label{eq_chengyau}
|Du| \le (m-1)\kappa u \qquad \text{on } \, M,
\end{equation}
and its version for sets with boundary in \cite[Thm. 2.24]{maririgolisetti_mono}:
\begin{equation}\label{eq_chengyau_boundary}
\frac{|D u|}{(m-1)\kappa u} \le \max \left\{ 1, \limsup_{x \ra \partial \Omega} \frac{|D u(x)|}{(m-1)\kappa u(x)} \right\} \qquad \text{on } \, \Omega. 
\end{equation}
Indeed, to reach our goal we reworked Korevaar's technique, that parallels Cheng-Yau's estimate though with some notable difference. We also tried to simplify the method and highlight its key features, in such a way to obtain neat constants in \eqref{eq_bellissima}. Writing $\nabla, \|\cdot \|$ for the connection and norm in the graph metric $g$ on $\Sigma$, and letting 
\[
{\bf n} : = \frac{\partial_s - Du}{W}, \qquad W : = \sqrt{1+|Du|^2}
\]
denote the upward pointing unit normal of $\Sigma$, the Jacobi equation for $W^{-1} = \langle {\bf n}, \partial_s \rangle$ leads to the identity
\[
\LL_g W = \Big( \|\II_\Sigma\|^2 + \overline{\Ricc}({\bf n},{\bf n}) \Big) W, 
\]
where $\|\II_\Sigma\|^2$ is the second fundamental form of $\Sigma$, $\overline{\Ricc}$ is the Ricci curvature of $M \times \R$, and $\LL_g$ is the weighted Laplacian 
\begin{equation}\label{def_Lg}
\LL_g \phi = \Delta_g \phi - 2\langle \frac{\nabla W}{W}, \nabla \phi) = W^2{\rm div}_g \big(W^{-2} \nabla \phi \big).
\end{equation}
The lower bound on the Ricci curvature implies
\[
\Ricc({\bf n},{\bf n}) \ge - (m-1)\kappa^2 \frac{|Du|^2}{W^2} = -(m-1) \kappa^2 \|\nabla u\|^2,
\]
so the negativity of $\Ricc$ can be absorbed by defining 
\[
z : = W e^{-Cu}, \qquad \text{for} \quad C \ge \kappa \sqrt{m-1},
\]
and noting that $z$ satisfies 
\begin{equation}\label{eq_z}
\LL_g z \ge \|\II_\Sigma\|^2z + \big( C^2 - \kappa^2(m-1)\big)\|\nabla u\|^2 \ge \|\II_\Sigma\|^2z.
\end{equation}
However, from \eqref{eq_z} it is not clear that $z$ satisfies a maximum principle on $\Omega$ if $\Omega$ is unbounded. Note that \eqref{eq_z} somehow corresponds, in the case of harmonic functions, to the equation for $\bar z = |Du|/u = |D\log u|$ (cf. \cite{liwang}):
\begin{equation}\label{eq_cylw}
\LL_\sigma' \bar z \ge \disp \frac{\bar z^3}{m-1} - (m-1)\kappa^2 \bar z : = f(\bar z).
\end{equation}
for a suitable weighted Laplacian $\LL_\sigma'$. The term with $\bar z^3$ appears because of the refined Kato inequality 
\[
|D^2 u|^2 \ge \frac{m}{m-1}|D|Du||^2 
\]
for solutions of $\Delta u = 0$, and allows $f$ to satisfy the Keller-Osserman condition \eqref{KO}. Although Cheng-Yau-Li-Wang's method proceeds by localizing \eqref{eq_cylw} with suitable cut-offs, the validity of \eqref{eq_chengyau}, equivalently, $f(\sup_M \bar z) \le 0$, is expectable also in view of Theorem \ref{teo_main} above. Indeed, integral estimates close to those in Theorem \ref{teo_main} were used, in place of localization tecniques, to prove the generalization of \eqref{eq_chengyau} and \eqref{eq_chengyau_boundary} for $p$-harmonic functions without the need to bound the sectional curvature of $M$, see \cite{sungwang,maririgolisetti_mono}.\par
On the contrary, in \eqref{eq_z} it seems difficult to relate, if possible, the refined Kato inequality for $\|\nabla^2 u\|^2 = W^{-2} \|\II_\Sigma\|^2$ to $z$ in order that the resulting $f$ matches \eqref{KO}. Therefore, to overcome the problem we localize as in Korevaar's method: 

\begin{proof}[Sketch of the proof of Theorem \ref{teo_graph_soloricci_intro}]
	Fix 
	\[
	C> \kappa \sqrt{m-1}.
	\]
	We claim that the following set is empty for every choice of constant $\tau>0$:
	\[
	\Omega'' : = \left\{ x \in \Omega \ : \ z(x) > \max\left\{ 1, \limsup_{y \ra \partial \Omega} \frac{W(y)}{e^{\kappa \sqrt{m-1}u(y)}} \right\} + \tau \right\}.	
	\]
	Once the claim is shown, \eqref{eq_bellissima} follows by letting $\tau \ra 0$ and then $C \downarrow \kappa \sqrt{m-1}$. By contradiction, assume that $\Omega'' \neq \emptyset$, and let $\delta, \eps', \eps$ be positive small numbers to be chosen later. Consider a non-negative exhaustion $\varrho$ on $\overline{\Omega''}$ (that is, $\{\varrho \le c\}$ is compact in $\overline{\Omega''}$ for every constant $c$), and define
	\[
	z_0 = W \left( e^{-Cu - \eps \varrho} - \delta \right) : = W \eta
	\]
	Since $u>0$, observe that $\{z_0>0\}$ is relatively compact in $\overline{\Omega''}$. For $\eps, \delta$ small enough, 
	\[
	\Omega' : = \left\{ x \in \Omega \ : \ z_0(x) > \max\left\{ 1, \limsup_{y \ra \partial \Omega} \frac{W(y)}{e^{\kappa \sqrt{m-1}u(y)}} \right\} + \tau \right\} \subset \Omega''.	
	\]
	is non-empty. Computing $\LL_g z_0$ we get
	\begin{equation}\label{eq_z0}
	\LL_g z_0 \ge W \Big[ \Delta_g \eta - (m-1)\kappa^2 \|\nabla u\|^2 \eta\Big] \qquad \text{on } \, \Omega',
	\end{equation}
	and moreover 
	\begin{equation}\label{eq_deltaeta}
	\begin{array}{lcl}
	\disp \frac{\Delta_g \eta}{\eta + \delta} & = & \disp \disp \| C \nabla u + \eps \nabla \varrho\|^2 - \eps \Delta_g \varrho \\[0.2cm]
	& \ge & \disp C^2(1-\eps')\|\nabla u\|^2 - \eps \left[ \Delta_g \varrho + \eps\left(1+ \frac{1}{\eps'}\right)\|\nabla \varrho\|^2\right],
	\end{array}  	
	\end{equation}
	where we used Young inequality with free parameter $\eps'>0$ to be specified later. In \cite{rosenbergschulzespruck,chh_nonexistence,dl_bounded,dl_unbounded}, at this stage $\varrho$ is chosen to be the distance function $r$ on $(M,\sigma)$ from a fixed origin. Since the components $g^{ij}$ and $\sigma^{ij}$ of $g^{-1}$ and $\sigma^{-1}$ are related by 
	\[
	g^{ij} = \sigma^{ij} - \frac{u^iu^j}{W^2},
	\]
	then we deduce $\|\nabla r\|^2 \le |Dr|^2 = 1$, while because of minimality
	\[
	\Delta_g r = g^{ij}r_{ij},  
	\]
	with $r_{ij}$ the components of $D^2r$. Since the eigenvalues of $g^{ij}$ with respect to $\sigma$ are $1$ and $W^{-2}$, that might be very different, to estimate $\Delta_g r$ an information on the full Hessian of the distance function (hence, an assumption involving $\Sect$) seems unavoidable. Indeed, one can weaken the bound on $\Sect$ to a bound involving an intermediate Ricci curvature, but a control just on $\Ricc$ does not suffice. Our first observation is that, in \eqref{eq_deltaeta}, the quantities $\Delta_g \varrho$ and $\|\nabla \varrho\|^2$ need not be estimated separately, but the last term in the right-hand side will be small for suitably small $\eps << \eps'$ provided that 
	\begin{equation}\label{eq_varrho}
	\Delta_g \varrho \le 1 - \|\nabla \varrho\|^2 \qquad \text{on } \, \Omega'
	\end{equation}
	We recognize that \eqref{eq_varrho} is satisfied by the logarithm of a solution $v$ of 
	\begin{equation}\label{eq_khasminskii}
	\left\{ \begin{array}{l}
	\Delta_g v \le v, \qquad v>1 \qquad \text{on } \, \Sigma \\[0.2cm]
	v \ \ \text{ is an exhaustion}, 
	\end{array}\right.
	\end{equation}
	which makes contact with the equations studied in the previous section, and in fact suggests to produce $v$ by mean of potential theory. We use the information that $\Sigma$ is calibrated, so one can control the volume of balls in $\Sigma$ in terms of the measures of $\Omega$ and $\partial \Omega$ in $M$: by modifying the method leading to \eqref{eq_transplant_volume}, we prove in \cite[Lem. 1.2]{cmmr} that $\Omega''$ can be isometrically embedded as an open subset of a complete manifold $(N,h)$ whose geodesic balls $B_r^h$ satisfy, under either of assumptions $(i)$ and $(ii)$ in Theorem \ref{teo_graph_soloricci_intro},
	\begin{equation}\label{eq_integral}
	\liminf_{r \ra \infty} \frac{\log |B_r^h|}{r^2} < \infty.
	\end{equation}
	Therefore, by \cite[Thm.4.1]{prsmemoirs} the weak maximum principle holds for $\Delta_h$ on $(N,h)$, namely, for $f \in C(\R)$ every solution of $\Delta_h u = f(u)$ that is bounded from above satisfies $f(u^*) \le 0$. In particular, the only non-negative solution of 
	\[
	\Delta_h \phi \ge \phi \qquad \text{with } \, \sup_N \phi < \infty
	\]
	is $\phi \equiv 0$ 
	
	\begin{remark}
		\emph{By \cite{prsmemoirs}, the validity of the weak maximum principle for $\Delta_h$ turns out to be equivalent to the stochastic completeness of $(N,h)$, that is, to the property that the minimal Brownian motion on $N$ is non-explosive. Since \eqref{eq_integral} implies
			\[
			\int^\infty \frac{s \, \di s}{\log |B^h_s|} = \infty,
			\]
			then the stochastic completeness of $N$ also follows by \cite[Thm. 9.1]{grigoryan}. 
		}
	\end{remark}
	
	Next, we use the AK-duality principle (Ahlfors-Khas'minskii duality) recently established in \cite{marivaltorta,maripessoa,maripessoa_2}: in our case of interest, the AK-duality guarantees that the weak maximum principle is equivalent to the possibility to produce solutions of \eqref{eq_khasminskii} on $(N,h)$. We mention that the AK-duality works in greater generality for subsolutions of fully nonlinear PDEs of the type, 
	\begin{equation}\label{eq_subsol_F}
	\mathscr{F}(x,u,\di u, \nabla^2 u) \ge 0,
	\end{equation}
	under mild assumptions on $\mathscr{F}$, and states the equivalence between a maximum principle on unbounded open sets for solutions of \eqref{eq_subsol_F}, and the existence of suitable families of exhaustions $v$ that solve
	\[
	\mathscr{F}(x,v,\di v, \nabla^2 v) \le 0.
	\]
	While, for general $\mathscr{F}$, the construction of $v$ is subtle and proceeds by stacking solutions of obstacle problems, in our case the linearity of $\Delta_h$ allows to produce $v$ in a much simpler way: fix a small relatively compact ball $B$ and an exhaustion $\Omega_j \uparrow N$ by means of relatively compact, smooth open sets, and for each $j$ let $z_j$ solve
	$$
	\left\{ \begin{array}{l}
	\Delta_h z_j = z_j \qquad \text{on } \, \Omega_j\backslash B, \\[0.2cm]
	z_j = 0 \quad \text{on } \, \partial B, \qquad z_j = 1 \quad \text{on } \, \partial \Omega_j.
	\end{array}\right.
	$$
	The maximum principle implies that $0 < z_j < 1$ on $\Omega_j \backslash \overline{B}$, thus setting $z_j=1$ on $N \backslash \Omega_j$ produces a locally Lipschitz, viscosity solution of $\Delta_h z_j \le z_j$ on $N \backslash \overline{B}$. By elliptic estimates, $z_j \downarrow z$ locally smoothly on $N \backslash B$, for some $0 \le z \in C^\infty(N \backslash B)$ solving  
	$$
	\left\{ \begin{array}{l}
	\Delta_h z = z \qquad \text{on } \, N \backslash \overline{B}, \\[0.2cm]
	z = 0 \quad \text{on } \, \partial B.
	\end{array}\right.
	$$
	Since $(N,h)$ is stochastically complete, necessarily $z \equiv 0$. Thus, given $k \in \mathbb{N}$ there exists $j(k)$ such that $z_{j(k)} < 2^{-k}$ on $\Omega_k$. The series 
	$$
	v : = \sum_{k=1}^\infty z_{j(k)}
	$$
	converges locally uniformly to a locally Lipschitz exhaustion satisfying $\Delta_h v \le v$ on $N \backslash B$. The full set of properties in \eqref{eq_khasminskii} can be matched by extending $v$ in $B$, then suitably translating and rescaling $v$, and eventually smoothing $v$ by using Greene-Wu approximation techniques \cite{GW}.\par
	Having shown the existence of $\varrho$ in \eqref{eq_varrho}, inserting into \eqref{eq_deltaeta} and using that 
	\[
	\|\nabla u\|^2 = \frac{W^2 - 1}{W^2} \ge 1 - \frac{1}{(1+\tau)^2}
	\]
	We infer the existence of $\eps'$ sufficiently small, and of $\eps(\eps',\tau) << \eps'$ small enough that
	\[
	\Delta_g \eta \ge C^2(1- 2\eps')\|\nabla u\|^2 (\eta+ \delta), 
	\]
	and thus, up to further reducing $\eps'$, we conclude from \eqref{eq_z0} 
	\[	
	\LL_g z_0 > 0 \qquad \text{on } \, \Omega',
	\]
	a contradiction that concludes the proof.
\end{proof}

\subsection{Ricci curvature and gradient estimate for CMC graphs}

The above method allows for generalizations to the CMC case, obtained in the very recent \cite{cmmr_2}, that apply to the rigidity of capillary graphs over unbounded regions. We detail the application in the next section. Although the guiding idea is the same as the one for Theorem \ref{teo_graph_soloricci_intro}, the difficulty to deal with nonzero $H$ makes the statement of the next result, and its proof, more involved.   

\begin{theorem}[\cite{cmmr_2}] \label{thm-CMCbound}
	Let $M$ be a complete manifold of dimension $m \geq 2$ satisfying
	$$
	\Ricc \ge - (m-1)\kappa^2 \qquad \text{on } \, M
	$$
	for some constant $\kappa \ge 0$. Let $\Omega \subseteq M$ be an open subset and let $u \in C^\infty(\Omega)$ be a positive solution of 
	\[
	\diver \left( \frac{Du}{\sqrt{1+|Du|^2}}\right) = mH \qquad \text{on } \, \Omega, 
	\]
	for some $H \in \R$. Suppose that either (i) or (ii) of Theorem \ref{teo_graph_soloricci_intro} are satisfied.  Let $C \geq 0$ and $A \geq 1$ satisfy
	\begin{itemize}
		\item[-] if $H< 0$, 
		\begin{equation}\label{C1HA0}
		mH^2 + C^2 - (m-1)\kappa^2 \ge 0
		\end{equation}
		\item[-] if $H \ge 0$,
		\begin{align}
		\label{CAH1}
		& mH^2 - \frac{CmH}{A} + (C^2 - (m-1)\kappa^2 ) \frac{A^2 - 1}{A^2} \ge 0, \\
		\label{CAH2}
		& mHC A + 2(C^2 - (m-1)\kappa^2) \ge 0.
		\end{align}
	\end{itemize}
	Then,
	\begin{equation} \label{CMC-gen-bound}
	\frac{\sqrt{1+|Du|^2}}{e^{Cu}} \leq \max \left\{ A, \limsup_{y\to\partial\Omega} \frac{\sqrt{1+|Du(y)|^2}}{e^{C u(y)}} \right\} \qquad \text{on } \, \Omega.
	\end{equation}
	In particular, if $\Omega = M$,
	\[
	\sqrt{1+|Du|^2} \le A e^{Cu} \qquad \text{on } \, M.
	\]	
\end{theorem}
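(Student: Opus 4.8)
The plan is to reproduce, step by step, the localization argument of the proof of Theorem~\ref{teo_graph_soloricci_intro}, keeping track of the extra lower-order term produced by $mH\ne0$. Working on the graph $\Sigma=\{(u(x),x)\}$ with the induced metric $g$, I write $W=\sqrt{1+|Du|^2}$, take ${\bf n}=(\partial_s-Du)/W$ for the upward unit normal and, denoting by $\nabla,\|\cdot\|$ the connection and norm of $g$, I use that $\|\nabla u\|^2=1-W^{-2}$ and $\Delta_g u=mH/W$ for the height function restricted to $\Sigma$. Since $\partial_s$ is parallel in $\R\times M$, the function $W^{-1}=\langle\partial_s,{\bf n}\rangle$ solves $\Delta_g(W^{-1})+(\|\II_\Sigma\|^2+\overline{\Ricc}({\bf n},{\bf n}))W^{-1}=0$ (the term involving $\nabla_\Sigma H$ vanishes because $H$ is constant), whence the Jacobi identity $\LL_g W=(\|\II_\Sigma\|^2+\overline{\Ricc}({\bf n},{\bf n}))W$ of the minimal case holds verbatim for a CMC graph, with $\LL_g$ as in \eqref{def_Lg}; moreover $\|\II_\Sigma\|^2\ge mH^2$ by Cauchy--Schwarz and $\overline{\Ricc}({\bf n},{\bf n})=W^{-2}\Ricc_M(Du,Du)\ge-(m-1)\kappa^2\|\nabla u\|^2$ by \eqref{eq_lowerricci}.

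\textbf{Contradiction and the Khas'minskii exhaustion.} Suppose \eqref{CMC-gen-bound} fails, so that $\Omega'':=\{We^{-Cu}>\theta\}$ is non-empty for $\theta:=\max\{A,\ \limsup_{x\to\partial\Omega}W(x)e^{-Cu(x)}\}+\tau$ and some $\tau>0$. Following \cite[Lem.~1.2]{cmmr} (adapted to the CMC case in \cite{cmmr_2}, with the divergence structure of the equation replacing the calibration identity \eqref{eq_transplant_volume}), one embeds $\Omega''$ isometrically into a complete manifold $(N,h)$ with $\liminf_{r\to\infty}\log|B_r^h|/r^2<\infty$; such $(N,h)$ is stochastically complete, equivalently it satisfies the weak maximum principle for $\Delta_h$. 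By the Ahlfors--Khas'minskii duality this yields an exhaustion $v>1$ of $\overline{\Omega''}$ with $\Delta_h v\le v$ (produced, since $\Delta_h$ is linear, by stacking solutions of $\Delta_h z=z$ on an annular exhaustion), and then $\varrho:=\log v\ge0$ is an exhaustion of $\overline{\Omega''}$ satisfying $\Delta_g\varrho\le1-\|\nabla\varrho\|^2$ on $\Omega''$, i.e. \eqref{eq_varrho}.

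\textbf{The core estimate.} Fix small $\eps,\delta>0$ and set $z_0=W(e^{-Cu-\eps\varrho}-\delta)=:W\eta$; then $\{z_0>0\}\Subset\overline{\Omega''}$ and, for $\eps,\delta$ small, $\Omega':=\{z_0>\theta\}$ is a non-empty, relatively compact subset of $\Omega''$ with $\overline{\Omega'}\Subset\Omega$. As in \eqref{eq_z0}--\eqref{eq_deltaeta}, $\LL_g z_0=(\|\II_\Sigma\|^2+\overline{\Ricc}({\bf n},{\bf n}))z_0+W\Delta_g\eta$; expanding $\Delta_g\eta$ with Young's inequality (free parameter $\eps'$), using $\Delta_g u=mH/W$ and $\Delta_g\varrho\le1-\|\nabla\varrho\|^2$ to absorb the $\varrho$-terms up to $O(\eps)$, and writing $t:=W^{-1}$, one gets on $\Omega'$
\[
\LL_g z_0\ \ge\ \Big[\,mH^2+\big(C^2(1-\eps')-(m-1)\kappa^2\big)(1-t^2)-CmH\,t\,\Big](z_0+W\delta)-O(\eps+\delta).
\]
On $\Omega'$ one has $W>z_0>\theta\ge A+\tau$ (the first inequality because $\eta<1$, as $u>0$), hence $t\le(A+\tau)^{-1}$; letting $\eps'\downarrow0$, the bracket tends to $g(t):=mH^2+(C^2-(m-1)\kappa^2)(1-t^2)-CmH\,t$, and the hypotheses are exactly what make $g\ge0$ on $(0,(A+\tau)^{-1}]$: for $H<0$, \eqref{C1HA0} reads $g(0)\ge0$ and $g$, a quadratic, remains $\ge0$ on $[0,1]$; for $H\ge0$, \eqref{CAH2} forces $g$ to be non-increasing on $(0,A^{-1}]$ while \eqref{CAH1} is precisely $g(A^{-1})\ge0$. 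Choosing $\eps'\ll1$, then $\eps\ll\eps'$, then $\delta\ll\eps$ — the slack being provided by $\tau>0$, which keeps $t$ bounded away from $A^{-1}$ at a maximum point of $z_0$ — gives $\LL_g z_0>0$ on $\Omega'$, contradicting the existence of an interior maximum of $z_0$ on $\overline{\Omega'}$. Thus $\Omega''=\emptyset$; letting $\tau\downarrow0$ yields \eqref{CMC-gen-bound}, and $\Omega=M$ gives the last assertion. In the borderline cases (equality in \eqref{C1HA0}, or the degenerate $H=0$, $C=\kappa\sqrt{m-1}$) one first runs the argument with $C$ replaced by $C+1/j$ and lets $j\to\infty$, exactly as in the minimal case.

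\textbf{Main obstacle.} The difficulty is twofold. The genuinely delicate part is the construction of $\varrho$ on the possibly incomplete $\Omega''$: it requires transplanting the borderline volume bound $\liminf_{r\to\infty}\log|B_r^h|/r^2<\infty$ from $M$ to the graph metric $g$ without a calibration at hand, so the CMC analogue of \cite[Lem.~1.2]{cmmr} from \cite{cmmr_2}, together with the Ahlfors--Khas'minskii/stochastic-completeness machinery, carries the weight here. The second, more bookkeeping-type, difficulty is the term $-CmH/W$ appearing in $\Delta_g\eta$ (absent when $H=0$): it forces one to control $t=W^{-1}$ on the contradiction set $\Omega'$, hence to introduce the parameter $A$ and to split according to the sign of $H$, and it is precisely the demand that the quadratic $g(t)$ above be nonnegative on the range of $t$ allowed on $\Omega'$ that produces the asymmetric conditions \eqref{C1HA0}--\eqref{CAH2}.
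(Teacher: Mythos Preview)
The paper does not give its own proof of Theorem~\ref{thm-CMCbound}; it only states the result (quoting \cite{cmmr_2}) and remarks that ``the guiding idea is the same as the one for Theorem~\ref{teo_graph_soloricci_intro}, the difficulty to deal with nonzero $H$ makes the statement of the next result, and its proof, more involved.'' Your proposal faithfully implements exactly that strategy: Jacobi identity for $W$, Khas'minskii exhaustion via stochastic completeness and AK-duality, localization with $z_0=W(e^{-Cu-\eps\varrho}-\delta)$, and contradiction at the interior maximum. You also correctly isolate the new phenomenon, namely the extra term $-CmH/W$ in $\Delta_g\eta$ coming from $\Delta_g u = mH/W$, and your quadratic analysis of $g(t)=mH^2+(C^2-(m-1)\kappa^2)(1-t^2)-CmHt$ is precisely what produces conditions \eqref{C1HA0}--\eqref{CAH2}.

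One small point of care: your displayed inequality groups everything as $[g_{\eps'}(t)](z_0+W\delta)-O(\eps+\delta)$, but strictly speaking the Jacobi term $(\|\II_\Sigma\|^2+\overline{\Ricc}({\bf n},{\bf n}))z_0$ carries the factor $z_0=W\eta$, while $W\Delta_g\eta$ carries $z_0+W\delta=W(\eta+\delta)$. The honest lower bound is $z_0\,g_{\eps'}(t)+W\delta\,A_2-O(\eps)(z_0+W\delta)$ with $A_2=C^2(1-\eps')(1-t^2)-CmHt$, and when $H>0$ the piece $W\delta A_2$ can be negative with $W$ a priori unbounded. This is not fatal: whenever $A_2<0$ one has $t$ bounded away from $0$ (since $A_2(0)=C^2(1-\eps')>0$), hence $\eta=z_0 t$ is bounded below by a positive constant independent of $\delta$, and the bad term is then dominated by $z_0\,g_{\eps'}(t)$ for $\delta$ small enough. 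It would strengthen your write-up to make this step explicit rather than absorbing it into $O(\eps+\delta)$.
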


\begin{remark}
	\emph{Because of the non-existence result of Heinz \cite{heinz}, Chern-Flanders \cite{chern,flanders} and Salavessa \cite{salavessa} for entire CMC graphs with nonzero $H$ on manifolds with subexponential volume growth, if $H \neq 0$ the bound for solutions on the entire $M$ is meaningful only if $\kappa > 0$. 
	}
\end{remark}

\begin{remark}
	Such $C \ge 0$ and $A \ge 1$ exist for all values of $\kappa \geq 0$, $H \in \R$. In particular,
	\begin{enumerate}
		\item [1).] if $\kappa = 0$ then we can choose
		$$
		A = 1 \qquad \text{and} \qquad C = 0
		$$
		for any value of $H\in\R$, and \eqref{CMC-gen-bound} becomes
		$$
		\sup_\Omega \sqrt{1+|Du|^2} = \limsup_{x\to\partial\Omega} \sqrt{1+|Du(x)|^2}
		$$
		\item [2).] if $\kappa>0$ and $H \le 0$, then we can choose
		\begin{equation}\label{eq_boundsAC}
		\begin{array}{ll}	
		A = 1, \; C=\sqrt{(m-1)\kappa^2-mH^2} & \quad \text{if } \, H > -\kappa\sqrt{\frac{m-1}{m}}, \\[0.4cm]
		A=1, \; C=0 & \quad \text{if } \, H \le -\kappa\sqrt{\frac{m-1}{m}};
		\end{array}
		\end{equation}
		\item [3).] if $\kappa>0$ and $H \geq 0$, we can choose
		$$
		A = 1 + \sqrt{\frac{mH}{\kappa\sqrt{m-1}}} \qquad \text{and} \qquad C = A\kappa\sqrt{m-1}.
		$$
	\end{enumerate}
	Note that, in the limit $H \ra 0$, the above choices allow us to recover inequality \eqref{eq_bellissima} for minimal graphs.
\end{remark}

It is interesting to compare our estimates with the known restrictions on the mean curvature of entire CMC graphs. By \cite{salavessa}, integrating on an open set $\Omega\Subset M$ the equation for the prescribed mean curvature, one has
\[
mH|\Omega| = \int_\Omega \diver \left( \frac{Du}{\sqrt{1+|Du|^2}}\right) = \int_{\partial \Omega} \frac{(Du,\eta)}{\sqrt{1+|Du|^2}} \le |\partial \Omega|.
\]
in particular, up to replacing $u$ with $-u$, $m|H|$ does not exceed the Cheeger constant of $M$:	
\[
m|H| \le c(M) : = \inf_{\Omega \Subset M} \frac{|\partial \Omega|}{|\Omega|}
\]
By Cheeger's inequality \cite{cheeger}, the bottom of the spectrum of $\Delta$ on $M$ satisfies $\lambda_1(M) \ge c(M)^2/4$. On the other hand, Brooks upper bound  \cite{brooks} for $\lambda_1(M)$ on manifolds satisfying $\Ricc \ge -(m-1)\kappa^2$, guarantees that
\[
\lambda_1(M) \le \frac{(m-1)^2\kappa^2}{4}.
\]
Summarizing, if $\Ricc \ge -(m-1)\kappa^2$ then any entire cmc graph must satisfy 
\begin{equation}\label{eq_cheeger}
|H| \le \frac{m-1}{m} \kappa.
\end{equation}
In particular, the second case in \eqref{eq_boundsAC} cannot happen for entire graphs. The bound in \eqref{eq_cheeger} is optimal since, in the hyperbolic space $\HH^m_\kappa$ of curvature $-\kappa^2$, Salavessa in \cite{salavessa} constructed a positive, radial, entire graph with mean curvature $H$ for each choice of $H \in (0, \kappa \frac{m-1}{m}]$. Furthermore, the graph has bounded gradient if $H < \kappa \frac{m-1}{m}$.

\section{Splitting for capillary graphs}

In the previous sections, we gave conditions to ensure that solutions of \eqref{problems} satisfy $f(u) \equiv 0$, and in some cases that they are constant. In this section, we give conditions to guarantee that the existence of a non-constant $u : M \ra \R$ solving the prescribed mean curvature problem 
\begin{equation}\label{eq_cap_split}
\diver \left( \frac{Du}{\sqrt{1+|Du|^2}} \right) = f(u)
\end{equation}
forces $M$ to split as a product $N \times \R$, with $u$ only depending on the $\R$-variable. The use of solutions of PDEs to prove splitting type results is classical in Geometric Analysis, and dates back at least to the theorem of Cheeger-Gromoll \cite{cheeger_gromoll} for manifolds with $\Ricc \ge 0$ and more than one end. In the same period, Schoen and Yau \cite{schoenyau} investigated the topology of complete, stable minimal hypersurfaces without boundary $\Sigma^m \hookrightarrow \bar M^{m+1}$ in a complete manifold with nonnegative sectional curvature, and discovered the following integral inequality for harmonic functions $u : \Sigma \ra \R$:
\begin{equation}\label{eq_schoenyau}
\frac{1}{m} \int_\Sigma \|\II_\Sigma\|^2 \varphi^2 + \frac{1}{m-1} \int_\Sigma \big\| \nabla \|\nabla u\|  \big\|^2 \varphi^2 \le \int_\Sigma \|\nabla \varphi\|^2 \|\nabla u\|^2 \qquad \forall \, \varphi \in \lip_c(\Sigma)
\end{equation}
(see also \cite{liwang_crelle} for another proof). The formula was later used by Li and Wang \cite{liwang_crelle} to prove that any such hypersurface must split as a Riemannian product $N \times \R$, provided that it is properly immersed and has more than one end. The elegant techniques were based on the theory of harmonic functions developed in \cite{litam_JDG}, and relate to their previous splitting theorems for manifolds with $\Ricc \ge -(m-1)\kappa^2$ and positive spectrum \cite{liwang2,liwang}.\par 
In \cite{far_HDR}, motivated by De Giorgi and Gibbons' conjectures for the Allen-Cahn equation
\[
\Delta u = u^3-u,
\]	
Farina independently discovered an integral formula, tightly related to \eqref{eq_schoenyau}, valid for each quasilinear operator. He used the integral inequality to establish the 1D-symmetry of certain stable solutions $u : \R^m \ra \R$ of 
\[
\diver \left( \frac{\varphi(|Du|)}{|Du|}Du \right) = f(u).	
\]
Here, stability means the the linearized operator at $u$ is non-negative, and $u$ is said to have 1D-symmetry if there exists $\omega \in \mathbb{S}^{m-1}$ such that $u(x) = \bar u(\omega \cdot x)$. However, he didn't publish the results up until 2008, in a joint paper with Sciunzi and Valdinoci \cite{fsv}. For the mean curvature operator, the formula reads as follows \cite[Thm. 2.5]{fsv}:
\begin{equation}\label{eq_farina}
\int_{\R^m} \left[ \frac{|D_\top|Du||^2}{(1+|Du|^2)^{3/2}} + \frac{|A|^2|Du|^2}{\sqrt{1+|Du|^2}} \right]\varphi^2 \le \int_{\R^m} \frac{|Du|^2|D\varphi|^2}{\sqrt{1+|Du|^2}} \qquad \forall \, \varphi \in \lip_c(\R^m),
\end{equation}
where, at a point $x$ such that $Du(x) \neq 0$, $D_\top$ and $A$ are the gradient and second fundamental form of the level set $\{u = u(x)\}$. They deduced the next theorems:	
\[
\begin{array}{l}
\text{if $f \in \lip_\loc(\R)$, stable solutions $u \in C^2(\R^m)$ of \eqref{eq_cap_split} have 1D-symmetry if either} \\[0.2cm]
\begin{array}{l}
\text{\cite[Thm.1.4]{fsv} $m = 2$ and either $f \ge 0$ or $f \le 0$ on $\R$;} \\[0.1cm]
\text{\cite[Thm.1.1]{fsv} $m=2$ and $|Du| \in L^\infty(\R^2)$;}\\[0.1cm]
\text{\cite[Thm.1.2]{fsv} $m=3$, $|Du| \in L^\infty(\R^3)$ and $\partial_{x_3} u > 0$;}\\[0.1cm] 
\end{array}
\end{array}
\]

\begin{remark}
	\emph{For nonconstant $f$, it would be desirable to guarantee the bound $|Du| \in L^\infty$ under suitable conditions on $u$ alone.
	}
\end{remark}
The study of capillary graphs in $\R \times M$ motivates the introduction of boundary conditions in \eqref{eq_cap_split}, and provide a natural setting where the results in the previous sections apply. A capillary hypersurface $\Sigma$ in a Riemannian manifold with boundary $\bar M$ is a CMC hypersurface that meets $\partial \bar M$ at a constant angle. If the angle is $\pi/2$, $\Sigma$ is said to have free boundary in $\bar M$. Capillarity hypersurfaces naturally have a variational interpretation, see \cite{finn_book,ros_souam}. Fix $\gamma \in (0, \pi/2]$, and suppose that $\Sigma$ is an embedded, connected hypersurface of $\bar M$, with boundary $\partial \Sigma \subset \partial \bar M$ that we assume to be transversal to $\partial \bar M$. Fix a connected, relatively compact open set $U \Subset \bar M$ in such a way that $\Sigma$ divides $U$ into two connected components $A$ and $B$. Let $B$ be the component inside which the angle $\gamma$ is computed. Note that $\partial \overline{B} \cap U$ is the union of a portion of $\Sigma$ and, possibly, of a relatively open subset $\Omega \subset \partial \bar M$. Then, $\Sigma$ is a stationary point for the functional
	\[
	|\Sigma \cap U| - \cos \gamma |\Omega|, 
	\]
with respect to variations that are compactly supported in $U$ and preserve the volumes of $A$ and $B$, if and only if $\Sigma$ is a CMC hypersurface satisfying 
	\[
	\langle \eta, \bar \eta \rangle = \cos \gamma \qquad \text{on } \, \partial \Sigma \cap U, 
	\]
where $\bar \eta$ and $\eta$ are, respectively, the outward pointing unit normals to $\partial \Sigma \hookrightarrow \Omega$ and $\partial \Sigma \hookrightarrow\Sigma$. \par
In \cite{finn_book}, Finn studied in depth various aspects of the capillarity problem. However, to the best of our knowledge, rigidity issues have been investigated only in compact ambient spaces $\bar M$ with a large amount of symmetries. Especially, capillary hypersurfaces in the unit ball $\bar M = \mathbb{B}^{m+1}$ attracted the attention of researchers, also in view of the link between free boundary minimal hypersurfaces in $\mathbb{B}^{m+1}$ and the Steklov eigenvalue problem (cf. \cite{fraserschoen_1,fraserschoen_2} and the references therein). We mention that stable capillary hypersurfaces in $\mathbb{B}^{m+1}$ have been classified in \cite{ros_vergasta,nunes,barbosa,lixiong} (free boundary case), and \cite{ros_souam,wangxia}, to which we refer for a detailed account. On the other hand, stable capillary hypersurfaces in $\R^{m+1}$ with planar boundaries were treated in \cite{lopez_wedge, lixiong_2}. We here address the case of noncompact graphs, so we let $\bar M = \Omega \times \R^+_0$ for some smooth open domain $\Omega \subset M$, and we consider $\Sigma$ to be the graph of $u : \Omega \ra \R^+_0$ satisfying $u = 0$ on $\partial \Omega$. Having fixed $U \Subset \bar M$, we let $B$ denote the subgraph of $\Sigma$ in $U$. Then, $\Sigma$ is a capillary hypersurface if and only if $u$ satisfies the overdetermined boundary value problem
\begin{equation}\label{eq_overdet}
\left\{ \begin{array}{ll}
\diver \left( \dfrac{Du}{\sqrt{1+|Du|^2}} \right) = mH & \quad \text{on } \, \Omega \\[0.5cm]
u = 0, \ \ \partial_\eta u = - \tan \gamma & \quad \text{on } \, \partial \Omega,
\end{array}\right.
\end{equation}
for some constant $H \in \R$, with $\eta$ the unit exterior normal to $\partial \Omega$. Problems like \eqref{eq_overdet} stimulated a vast research over the past 30 years, originally in the semilinear case  
\begin{equation}\label{eq_overdet_lapla}
\left\{ \begin{array}{l}
\Delta u + f(u) = 0 \qquad \text{on } \, \Omega \\[0.2cm]
u > 0 \qquad \text{on } \, \Omega, \\[0.2cm]
u = 0, \ \ \partial_\eta u = \mathrm{const} \qquad \text{on } \, \partial \Omega
\end{array}\right.
\end{equation}
with $f \in \lip_\loc(\R)$, a major issue being to characterize domains supporting a bounded solution of \eqref{eq_overdet_lapla} (that, agreeing with the literature, we call $f$-extremal domains). Rigidity for relatively compact $\Omega$ follows from pioneering works of Serrin and Weinberger \cite{serrin_movingplane,weinberger}, while the noncompact case has been investigated in the influential \cite{bcn_1,bcn_2} by Berestycki, Caffarelli and Nirenberg. In \cite{bcn_1}, the authors conjectured that the only connected, smooth $f$-extremal domains of $\R^m$ with connected complementary are either the ball, the half-space, the cylinder $B^k \times \R^{m-k}$ or their complements. The conjecture turns out to be false in full generality, by counterexamples in \cite{sicbaldi} ($m \ge 3$) and \cite{ros_ruiz_sic_2} ($m = 2$ and $\Omega$ an exterior region). Surprisingly, in $\R^2$ the conjecture is true if both $\Omega$ and its complementary are unbounded \cite{ros_ruiz_sic_1}. Corresponding rigidity results for $f$-extremal domains in $\HH^2$ and $\Sph^2$ have been obtained in \cite{espi_far_maz,espi_maz}.

Typically, the characterization of domains $\Omega \subset \R^m$ supporting a solution of \eqref{eq_overdet_lapla} proceeds by first showing that $u$ is monotone in one variable, namely $\partial_m u > 0$, which is the case  for suitable $f$ (for instance, derivatives of bistable nonlinearities, \cite{bcn_1}). Since $\partial_m u$ satisfies the linearized equation $\Delta w + f'(u)w = 0$, its positivity implies that $u$ is stable. In \cite{far_vald_ARMA}, Farina and Valdinoci made the remarkable discovery that the identity corresponding to \eqref{eq_farina} for $\Delta u + f(u) = 0$, meaning 
\begin{equation}\label{eq_farina_2}
\int_{\Omega} \left[ |D_\top|Du||^2 + |A|^2|Du|^2 \right]\varphi^2 \le \int_{\Omega} |Du|^2|D\varphi|^2,
\end{equation}
remains true even if the support of $\varphi$ contains a portion of $\partial\Omega$, provided that $u$ satisfies \eqref{eq_overdet_lapla} and $\partial_m u > 0$. In other words, the contributions of boundary terms to \eqref{eq_farina_2} on $\partial \Omega \cap {\rm spt } \varphi$ vanish identically if both $u$ and $\partial_\eta u$ are constant on $\partial \Omega$, a fact that enables to use \eqref{eq_farina_2} to characterize domains supporting a solution of \eqref{eq_overdet_lapla}. This point of view has been further developed in \cite{fmv} on manifolds with $\Ricc \ge 0$, to obtain splitting results for domains supporting a solution of \eqref{eq_overdet_lapla} that is monotone in the direction of a Killing field. 

In the literature, overdetermined problems for more general operators have mainly been investigated in the $p$-Laplacian case, leaving the corresponding questions for the mean curvature operator, to the best of our knowledge, mostly unexplored. Taking into account that \eqref{eq_overdet} alone imposes restriction on the geometry of $\Omega$ even without overdetermined boundary conditions (cf. \cite{lopez,collin_krust,imperapigolasetti}, and the survey in \cite{jfwang}), we could expect more rigidity than in the case of the Laplacian. However, as far as we know there is still no attempt to study the equivalent of the Berestycki-Caffarelli-Nirenberg conjecture for \eqref{eq_cap_split}. We here comment on the very recent \cite{cmmr_2}, where we address this conjecture for constant $f$, proving the following splitting theorem for capillary graphs on manifolds with non-negative Ricci curvature. The result also relates to the work in progress \cite{far_fra_mar}, where the authors study the problem for general $f$, with emphasis on bistable nonlinearities. To state the theorem, we recall that a manifold with boundary $(N,h)$ is said to be parabolic if, for each compact subset $K \subset N$, the capacity
\[
\capac(K) : = \inf \left\{ \int_N |D\varphi|^2 \di x \ : \ \varphi \in \lip_c(N), \varphi \ge 1 \ \text{on } \, K \right\}
\]
vanishes. It can be checked that the condition corresponds to the recurrency of the minimal Brownian motion on $N$ that is normally reflected on $\partial N$. By \cite{imperapigolasetti}, the parabolicity of $(N,h)$ is equivalent to the validity of the following principle: 
\[
\left\{\begin{array}{l}
\Delta_h u \ge 0 \quad \text{on } \, \mathrm{Int}(N) \\[0.2cm]
\partial_\eta u \le 0 \quad \text{on } \, \partial N, \\[0.2cm]
\sup_N u < \infty 
\end{array}\right. \qquad \Longrightarrow \qquad \sup_N u = \sup_{\partial N} u,
\]
with $\eta$ the outward pointing normal to $\partial N$. If $N= \overline\Omega$ and $\Omega$ is an open subset of a complete manifold $M$, by \cite{imperapigolasetti} a sufficient condition for $\overline{\Omega}$ to be parabolic is that 
\begin{equation}\label{eq_suffipara}
\int^{\infty} \frac{\di s}{|\partial B_s \cap \Omega|} = \infty,
\end{equation}
with $B_s$ the metric ball of radius $s$ in $M$ centered at a fixed origin of $M$. By an application of H\"older inequality (cf. \cite[Prop. 1.3]{rigolisetti}), \eqref{eq_suffipara} is implied by 
\[
\int^{\infty} \frac{s \, \di s}{|B_s \cap \Omega|} = \infty. 
\]

%

\begin{theorem}\label{teo_splitting}
	Let $(M, ( \, , \, ) )$ be a complete Riemannian manifold of dimension $m \ge 2$, and let $\Omega \subset M$ be a connected, parabolic open subset with smooth boundary. Assume that 
	\[
	\left\{ \begin{array}{ll}
	\Ricc \ge - (m-1)\kappa^2 & \quad \text{on } \, M, \\[0.2cm]
	\Ricc \ge 0 & \quad \text{on } \, \Omega.
	\end{array}\right. 
	\]
	for some constant $\kappa>0$. Split $\partial \Omega$ into its connected components $\{\partial_j\Omega\}$, $1 \le j \le j_0$, possibly $j_0 = \infty$. Let $u \in C^2(\overline\Omega)$ be a non-constant solution of the capillarity problem
	\begin{equation}\label{eq_overdet_cmc}
	\left\{ \begin{array}{ll}
	\diver \left(\dfrac{Du}{\sqrt{1+ |Du|^2}} \right) = mH \qquad \text{on } \, \Omega, \\[0.5cm]
	\inf_\Omega u > -\infty, \\[0.2cm]
	u = b_j, \ \partial_{\eta} u = c_j \quad \text{on } \, \partial_j \Omega,
	\end{array}\right.
	\end{equation}
	for some set of constants $H, c_j, b_j \in \R$, with $\eta$ the exterior normal to $\partial \Omega$, $\{c_j\}$ a bounded sequence if $j_0 = \infty$, and with the agreement $b_1 \le b_2 \le \ldots \le b_{j_0}$. Assume that either
	\[
	\text{$u$ is constant on $\partial \Omega$},
	\]
	or that 
	\begin{equation}\label{ipo_riccivol}
	\liminf_{r \ra \infty} \frac{\log |\partial \Omega \cap B_r|}{r^2}< \infty. 
	\end{equation}
	If there exists a Killing vector field $X$ on $\overline\Omega$ with the following properties:
	\begin{equation}\label{ipo_X}	
	\sup_\Omega |X| < \infty,  \qquad \left\{ \begin{array}{l}
	c_j (X, \eta) \ge 0 \quad \text{on $\partial_j \Omega$, for every $j$,} \\[0.2cm]
	c_j (X, \eta) \not \equiv 0 \quad \text{on $\partial_j \Omega$, for some $j$,}
	\end{array}\right.  
	\end{equation}
	then: 
	\begin{itemize}
		\item[(i)] $\Omega = (0, T) \times N$ with the product metric, for some $T \le \infty$ and some complete, boundaryless $N$ with $\Ricc_N \ge 0$. 
		\item[(ii)] The product $(X, \partial_t)$ is a positive constant on $\Omega$. 
		\item[(iii)] The solution $u(t,x)$ only depends on the variable $t \in (0,T)$. Moreover, setting $\partial_1 \Omega = \{0\} \times N$, 	
		\begin{itemize}
			\item[-] if $H=0$, then $c_1 < 0$ and 
			\begin{equation}\label{eq_split_minimal}
			u(t) = b_1 - c_1 t \qquad \text{on } \, \Omega.
			\end{equation}
			\item[-] if $H>0$, then $c_1 \le 0$ and 
			\begin{equation}\label{eq_split_CMC}
			u(t) = b_1 + \frac{1}{mH} \left( \frac{1}{\sqrt{1+c_1^2}} - \sqrt{1- \left(mHt -\frac{c_1}{\sqrt{1+c_1^2}} \right)^2} \right), 
			\end{equation}
			in particular, 
			\[
			\disp T < \frac{1}{mH}\left( 1 + \frac{c_1}{\sqrt{1+c_1^2}}\right) \ \ \ \text{if } \, H > 0, \qquad T \le \frac{1}{m|H|}\frac{|c_1|}{\sqrt{1+c_1^2}} \ \ \ \text{if } \, H< 0.
			\]	
		\end{itemize}
	\end{itemize}
	Furthermore, if $u$ is constant on the entire $\partial \Omega$, then the requirement \eqref{ipo_riccivol} can be omitted and the case $H=0$ in (iii) occurs.
\end{theorem}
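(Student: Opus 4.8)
The plan is to run a Sternberg--Zumbrun / Farina--Valdinoci argument for the mean curvature operator \emph{up to the boundary of $\Omega$}: the Killing field $X$ supplies a monotone direction (hence stability of the graph), the overdetermined data on $\partial\Omega$ kills the boundary contributions in the stability identity, and parabolicity of $\Omega$ lets one pass to the limit and read off a rigidity (flatness) statement that yields the product splitting; the explicit profiles then come from a one--dimensional ODE. First I would fix the analytic setting. Since $\Ricc\ge0$ on $\Omega$, the ambient Ricci curvature along the graph, $\overline{\Ricc}(\nu,\nu)=\Ricc(Du,Du)/(1+|Du|^2)$, is $\ge0$; so a boundary version of Theorem~\ref{thm-CMCbound}, whose proof sees the ambient curvature only over $\Omega$, applies with $\kappa=0$ (that is, $A=1$, $C=0$), hypothesis (i)/(ii) there being exactly \eqref{ipo_riccivol} or the constancy of $u$ on $\partial\Omega$. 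Since the tangential derivative of $u$ vanishes on each $\partial_j\Omega$ we have $|Du|=|c_j|$ there, so the estimate gives $\sup_\Omega|Du|=\sup_j|c_j|=:\Lambda<\infty$. Consequently $W:=\sqrt{1+|Du|^2}\in[1,\sqrt{1+\Lambda^2}]$, the graph metric $g$ is uniformly equivalent to $(\,,\,)$ on $\overline\Omega$ (so $(\overline\Omega,g)$ is parabolic iff $\Omega$ is), and the linearized operator $\LL_u\psi:=\diver\!\big(W^{-1}(\delta^{ij}-W^{-2}u^iu^j)D_j\psi\big)$ is uniformly elliptic on $\overline\Omega$.

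Next I would exploit the Killing field. Flow--invariance of the equation makes $w:=X(u)=(X,Du)$ a bounded solution ($|w|\le\Lambda\sup_\Omega|X|$) of $\LL_u w=0$ on $\Omega$; on $\partial_j\Omega$ one has $w=c_j(X,\eta)\ge0$ by \eqref{ipo_X}, with $w\not\equiv0$ on $\partial\Omega$. Applying parabolicity of $\Omega$ to the subsolution $\max\{-w,0\}$, which is $\le0$ on $\partial\Omega$ and bounded, yields $w\ge0$, and then $w>0$ on $\Omega$ by the strong maximum principle, since $w\equiv0$ would make $u$ invariant under the flow of $X$, against \eqref{ipo_X}. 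In particular $Du\ne0$ on $\Omega$, and $(X,\nu)=-w/W$ is a nowhere--vanishing Jacobi field on the graph $\Sigma$, so $\Sigma$ is stable. Feeding $w$ into the stability (Sternberg--Zumbrun / Farina--Valdinoci) identity for the mean curvature operator, and observing, exactly as in \cite{far_vald_ARMA,fmv}, that the boundary contributions on $\partial_j\Omega$ vanish because $u$ and $\partial_\eta u$ are constant there, one obtains
\[
\int_{\Omega}\Big[\frac{|D_\top|Du||^2}{W^3}+\frac{|A|^2|Du|^2}{W}\Big]\varphi^2\ \le\ \int_{\Omega}\frac{|Du|^2}{W}\,|D\varphi|^2\qquad\text{for all }\varphi\in\lip_c(\overline\Omega),
\]
where $A$ and $D_\top$ are the second fundamental form and tangential gradient of the (everywhere regular) level sets $\{u=c\}$. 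Since $|Du|^2/W\le\Lambda$ and $\Omega$ is parabolic, choosing $\varphi\uparrow1$ with $\int_\Omega|D\varphi|^2\to0$ forces $A\equiv0$ and $D_\top|Du|\equiv0$: each level set is totally geodesic in $M$, $|Du|$ is constant on it, and a short computation with the Hessian of $u$ shows that the unit--gradient trajectories are geodesics orthogonal to the level sets. Hence the level sets foliate $\Omega$ by mutually isometric, complete, totally geodesic hypersurfaces, $\Omega=(0,T)\times N$ isometrically with $u=u(t)$, and $\Ricc_N=\Ricc_\Omega|_{TN}\ge0$ because the metric is a product; this proves (i).

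Finally I would identify $X$ and compute $u$. On the product $(X,\partial_t)$ is independent of $t$; since $X$ is Killing and $\Ricc_\Omega(X,\partial_t)=0$, it is harmonic on $N$, and being bounded, nonnegative (from the sign in \eqref{ipo_X} together with $c_1\le0$) and $\not\equiv0$, Yau's theorem gives $(X,\partial_t)\equiv\mathrm{const}>0$, which is (ii). For (iii), integrating $\diver(Du/W)=mH$ in $t$, with $\dot u(0)=-c_1$ read from $\partial_\eta u=c_1$ on $\partial_1\Omega=\{0\}\times N$, yields $\dot u/\sqrt{1+\dot u^2}=mHt-c_1/\sqrt{1+c_1^2}$, hence the linear profile \eqref{eq_split_minimal} when $H=0$ (with $c_1<0$, else $u$ constant) and \eqref{eq_split_CMC} when $H>0$, together with the bounds on $T$: for $H>0$ the interval must stop strictly before $\dot u$ blows up (otherwise $u\notin C^2(\overline\Omega)$), and for $H<0$ before $\dot u$ reaches $0$ (since $w>0$ forces $\dot u>0$). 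If $u$ is constant on all of $\partial\Omega$, then hypothesis (ii) of the gradient estimate replaces (i), so \eqref{ipo_riccivol} is superfluous; moreover $H\ne0$ would produce a second boundary component at a level strictly above $b_1$ (the profile being strictly increasing when $H>0$, and a genuine component appearing at $t=T$ when $H<0$), contradicting the constancy of $u$ on $\partial\Omega$; hence $H=0$ and the linear case occurs.

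The point I expect to be the main obstacle is the boundary version of the stability inequality in the Riemannian setting: verifying that the boundary terms on $\partial\Omega$ genuinely cancel under the overdetermined data (and that the inequality persists for test functions supported up to $\partial\Omega$), and then squeezing the full product rigidity out of its equality case. The sign propagation for $w$ on a possibly unbounded, only parabolic $\Omega$, and the fact that the gradient estimate of \cite{cmmr_2} applies with the clean constants using only $\Ricc\ge0$ on $\Omega$, are the other two points that require care.
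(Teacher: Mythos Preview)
Your proposal follows essentially the same architecture as the paper's proof: apply the CMC gradient estimate of Theorem~\ref{thm-CMCbound} with $\kappa=0$ (using only $\Ricc\ge 0$ on $\Omega$) to get a global bound on $|Du|$, use parabolicity and the strong maximum principle to show $w=(X,Du)>0$, feed this into an integral stability identity in which the overdetermined boundary data kills the boundary terms, and then exploit parabolicity again to pass to the limit and force the splitting. The delicate points you flag at the end (boundary cancellation, sign propagation for $w$, applicability of the gradient estimate with $\kappa=0$) are exactly the ones the paper singles out.

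There is one small but genuine difference worth noting. For conclusion (ii), the paper works with a strengthened integral identity \eqref{form_good} that carries an additional nonnegative term $\int_\Sigma \frac{\bar v^2}{W^2}\big\|\nabla\big(\varphi\|\nabla u\|W/\bar v\big)\big\|^2$; vanishing of this term directly yields the constancy of $\|\nabla u\|W/\bar v$, which on the product rewrites as constancy of $(X,\partial_t)$. You instead use the simpler Farina--Sciunzi--Valdinoci inequality to get the splitting, and then argue separately that $(X,\partial_t)$, being a bounded positive harmonic function on the complete factor $N$ with $\Ricc_N\ge 0$, is constant by Yau's Liouville theorem. Both routes work; the paper's has the advantage of packaging everything into a single identity, while yours keeps the integral inequality in its more familiar form and trades the extra term for a classical Liouville step.
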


\begin{remark}
	\emph{Evidently, if $T < \infty$ then \eqref{eq_split_minimal}, \eqref{eq_split_CMC} relate the constants $b_2,c_2$ describing the boundary data on $\partial_2 \Omega = \{T\} \times N$ to $b_1,c_1$.
	}
\end{remark}

We stress that $X$ is just required to satisfy a mild relation, to make it coherent with the Neumann data: precisely, $X$ shall point outwards on components $\partial_j \Omega$ for which the outward pointing normal derivative of $u$ is positive (i.e., $c_j>0$), leaving the possibility that $X$ be tangent to $\partial_j \Omega$, while $X$ shall point inwards if $c_j<0$. No condition is imposed on components of $\partial \Omega$ where $c_j = 0$. However, to avoid a degenerate case, we require that $X$ is not tangent to the entire portion of $\partial \Omega$ where $\partial_\eta u \neq 0$.\par
To conclude, we briefly describe the proof of Theorem \ref{teo_splitting}. The argument follows the lines of \cite[Thm.5]{fmv}, with some notable improvements depending on the gradient estimate in Theorem \ref{thm-CMCbound}, and can be divided into the following steps.
\begin{itemize}
	\item[-] We prove the monotonicity of $u$ along the flow lines of $X$, namely, that $\bar v : = (Du,X)$ is positive on $\Omega$. To this aim, we first observe that $\LL_g \bar v = 0$ on $\Omega$, with $\LL_g$ the weighted operator in \eqref{def_Lg}. The boundedness of $|X|$ and the gradient estimate \eqref{CMC-gen-bound}, applied with $\kappa=0$, imply $\inf_\Omega \bar v > -\infty$, while $\bar v \ge 0, \not \equiv 0$ on $\partial \Omega$ because of our boundary condition \eqref{ipo_X}. Next, because of the boundedness of $|Du|$ and the parabolicity of $(\overline{\Omega}, \sigma)$, the weighted operator $\LL_g$ is parabolic on $\Sigma = (\overline{\Omega},g)$ (cf. \cite{imperapigolasetti}, and \cite{AMR_book,prsmemoirs}), and therefore 
	\[
	\inf_\Omega \bar v \equiv \inf_{\partial \Omega} \bar v \ge 0.
	\]
	The conclusion $\bar v > 0$ follows from the strong maximum principle. In particular, $\di u \neq 0$ on $\Omega$.
	\item[-] We show the validity of the following integral formula on $\Sigma$: for every $\varphi \in \lip_c(\overline\Sigma)$,
	\begin{equation}\label{form_good}
	\begin{array}{l}
	\disp \int_\Sigma \left[ W^2\left( \|\nabla_\top \|\nabla u\|\|^2 + \|\nabla u\|^2 \|A\|^2\right) + \frac{\Ricc(Du,Du)}{1+|Du|^2} \right]\varphi^2 \\[0.5cm]
	\qquad \disp + \int_\Sigma \frac{\bar v^2}{W^2} \left\| \nabla \left( \frac{\varphi\|\nabla u\|W}{\bar v} \right) \right\|^2 \le \int_\Sigma \|\nabla u\|^2 \|\nabla \varphi \|^2
	\end{array}
	\end{equation}
	where, at a point $x$ such that $\di u(x) \neq 0$, $\nabla_\top$ and $A$ are the gradient and second fundamental form of the level set $\{u = u(x)\}$ in $\Sigma$, and integration is with respect to the volume measure of $\Sigma$. Inequality \eqref{form_good} is tightly related to \eqref{eq_schoenyau} and \eqref{eq_farina_2}; in particular, the ``magic cancellation" of the boundary terms in \eqref{eq_farina_2} still holds for \eqref{eq_overdet_cmc}. As shown in \cite{far_fra_mar}, in fact, overdetermined problems associated to a large class of quasilinear operators lead to integral formulas like \eqref{form_good}.
	\item[-] We use again the gradient bound $|Du| \le C$ to deduce that 
	\[
	\int_\Sigma \|\nabla u\|^2 \|\nabla \varphi \|^2 \le \int_\Omega W|D\varphi|^2 \di x \le C' \int_\Omega |D\varphi|^2 \di x, 
	\]
	for some constant $C'$. Therefore, the parabolicity of $(\overline{\Omega},\sigma)$ enables to deduce the existence of a sequence $\varphi_j \uparrow 1$ such that
	\[
	\int_\Omega |D\varphi_j|^2 \di x \ra 0,
	\]
	so taking limits we deduce 
	\[
	\|\nabla_\top \|\nabla u\|\|^2 + \|\nabla u\|^2 \|A\|^2 + \left\| \nabla \left( \frac{\|\nabla u\|W}{\bar v}\right) \right\|^2 \equiv 0 \quad \text{on } \, \Omega.
	\]		
	By adapting \cite{fmv}, the first two conditions guarantee that $\Omega$ splits as a Riemannian product along the flow lines of $\nabla u$ (everywhere nonvanishing on $\Omega$), and that $u$ only depends on the split direction. Also, the constancy of $\|\nabla u\|W/\bar v$ rewrites as the constancy of $(X, \partial_t)$ on $\Omega$.
\end{itemize}

\section*{Acknowledgements}
P. Pucci  is a member of the {\em Gruppo Nazionale per
	l'Analisi Ma\-te\-ma\-ti\-ca, la Probabilit\`a e le loro Applicazioni}
(GNAMPA) of the {\em Istituto Nazionale di Alta Matematica} (INdAM).
The manuscript was realized within the auspices of the INdAM -- GNAMPA Projects
{\em Equazioni alle derivate parziali: problemi e mo\-del\-li} (Prot\_U-UFMBAZ-2020-000761).
P. Pucci was also partly supported  by  the {\em Fondo Ricerca di Base di Ateneo -- Eser\-ci\-zio 2017--2019} of the University of Perugia, named {\em PDEs and Nonlinear Analysis}.

\section*{Conflict of interest}

The authors declare that they have no conflict of interest.




\begin{thebibliography}{plain}

\bibitem{almgren}
	  Almgren FJ Jr (1966) Some interior regularity theorems for minimal surfaces and an extension of Bernstein's theorem.
	 \emph{Ann Math}
	 85: 277--292.

\bibitem{altwu}
	 Altschuler S, Wu L (1994) Translating surfaces of the non-parametric mean curvature flow with prescribed contact angle.
	 \emph{Calc Var}
 2: 101--111.

\bibitem{AMR_book}
	 Al\'ias LJ, Mastrolia P, Rigoli M (2016)
	 \emph{Maximum Principles and Geometric Applications},
	 Cham: Springer.

\bibitem{baoshi}
	 Bao H, Shi Y (2014) Gauss maps of translating solitons of mean curvature flow.
	 \emph{P Am Math Soc}
	142: 4333--4339.

\bibitem{barbosa}
	 Barbosa E (2018) On CMC free-boundary stable hypersurfaces in a Euclidean ball.
	 \emph{Math Ann}
	 372: 179--187.

\bibitem{bcn_2}
	 Berestycki H, Caffarelli LA, Nirenberg L (1998) Further qualitative properties for elliptic equations in unbounded domains.
	 \emph{Ann Scuola Norm Sup Pisa Cl Sci}
	 25: 69--94.

\bibitem{bcn_1}
 Berestycki H, Caffarelli LA, Nirenberg L (1997) Monotonicity for elliptic equations in unbounded Lipschitz domains.
	 \emph{Commun Pure Appl Math}
	 50: 1089--1111.

\bibitem{bernstein}
	 Bernstein S (1915) Sur un th\'eor\`eme de g\'eom\`etrie et son application aux \'equations aux d\'eriv\'ees partielles du type elliptique.
	 \emph{Commun Soc Math de Kharkov}
	 15: 38--45.

\bibitem{bmr_JFA}
	 Bianchini B, Mari L, Rigoli M (2015) Yamabe type equations with sign-changing nonlinearities on non-compact Riemannian manifolds.
	 \emph{J Funct Anal}
	 268: 1--72.

\bibitem{bmpr}
	 Bianchini B, Mari L, Pucci P, et al. (2021)
	 \emph{Geometric Analysis of Quasilinear Inequalities on Complete Manifolds},
	  Cham: Birh\"auser/Springer.

\bibitem{bdgm}
	 Bombieri E, De Giorgi E, Miranda M (1969) Una maggiorazione a priori relativa alle ipersuperfici minimali non parametriche.
	 \emph{Arch Rational Mech Anal}
	 32: 255--267.

\bibitem{bdgg}
	 Bombieri E, De Giorgi E, Giusti E (1969) Minimal cones and the Bernstein problem.
	 \emph{Invent Math}
	 7: 243--268. 

\bibitem{bg}
	 Bombieri E, Giusti E (1972) Harnack's inequality for elliptic differential equations on minimal surfaces.
	 \emph{Invent Math}
 15: 24--46.

\bibitem{bckrt}
	 Bonorino L, Casteras JB, Klaser P, et al. (2020) On the asymptotic Dirichlet problem for a class of mean curvature type partial differential equations.
	{\em  Calc Var} 59: 135.

\bibitem{borbely}
	 Borbely A (2017) Stochastic Completeness and the Omori-Yau Maximum Principle.
	 \emph{J Geom Anal}
	 27: 3228--3239.

\bibitem{brooks}
	 Brooks R (1981) A relation between growth and the spectrum of the Laplacian.
	 \emph{Math Z}
 178: 501--508.

\bibitem{warped}
	 Casteras JB, Heinonen E, Holopainen I, et al. (2020) Asymptotic Dirichlet problems in warped products.
	 \emph{Math Z}
	 295: 211--248.

\bibitem{chh}
	 Casteras JB, Heinonen E, Holopainen I (2017) Solvability of minimal graph equation under pointwise pinching condition for sectional curvatures.
	 \emph{J Geom Anal}
	 27: 1106--1130.

\bibitem{chh_fminimal}
	 Casteras JB, Heinonen E, Holopainen I (2019) Dirichlet problem for $f$-minimal graphs.
	 \emph{arXiv:1605.01935v2}.

\bibitem{chh_nonexistence}
	 Casteras JB, Heinonen E, Holopainen I (2020) Existence and non-existence of minimal graphic and $p$-harmonic functions.
	 \emph{P Roy Soc Edinb A}
	 150: 341--366.

\bibitem{chr}
	 Casteras JB, Holopainen I, Ripoll JB (2017) On the asymptotic Dirichlet problem for the minimal hypersurface equation in a Hadamard manifold.
	 \emph{Potential Anal}
	 47: 485--501.

\bibitem{chr2}
	 Casteras JB, Holopainen I, Ripoll JB (2018) Convexity at infinity in Cartan-Hadamard manifolds and applications to the asymptotic Dirichlet and Plateau problems.
	 \emph{Math Z}
	 290: 221--250.

\bibitem{cheeger}
	 Cheeger J (1970) A lower bound for the smallest eigenvalue of the Laplacian, In:
	 \emph{Problems in Analysis (Papers dedicated to Salomon Bochner, 1969)},
	 Princeton: Princeton Univ. Press, 195--199.

\bibitem{cc_almost_rigidity}
	 Cheeger J, Colding TH (1996) Lower bounds on Ricci curvature and the almost rigidity of warped products.
	 \emph{Ann Math}
	 144: 189--237.

\bibitem{cc_1}
	 Cheeger J, Colding TH (1997) On the structure of spaces with Ricci curvature bounded below. I.
 \emph{J Differ Geom}
	 46: 406--480.

\bibitem{cc_2}
	 Cheeger J, Colding TH (2000) On the structure of spaces with Ricci curvature bounded below. II.
	 \emph{J Differ Geom}
	 54: 13--35.

\bibitem{cc_3}
	 Cheeger J, Colding TH (2000) On the structure of spaces with Ricci curvature bounded below. III.
	 \emph{J Differ Geom}
	 54: 37--74.

\bibitem{ccm}
	 Cheeger J, Colding TH, Minicozzi WP (1995) Linear growth harmonic functions on complete manifolds with nonnegative Ricci curvature.
	 \emph{Geom Funct Anal}
	 5: 948--954.

\bibitem{cheeger_gromoll}
	 Cheeger J, Gromoll D (1971) The splitting theorem for manifolds of nonnegative Ricci curvature.
	 \emph{J Differ Geom}
	 6: 119--128.

\bibitem{chengyau}
	 Cheng SY, Yau ST (1975) Differential equations on Riemannian manifolds and their geometric applications.
	 \emph{Commun Pure Appl Math}
	 28: 333--354.

\bibitem{chern}
	 Chern SS (1965) On the curvatures of a piece of hypersurface in Euclidean space.
	 \emph{Abh Math Sem Univ Hamburg}
	 29: 77--91.

\bibitem{cluschsch}
	 Clutterbuck J, Schn\"urer OC, Schulze F (2007) Stability of translating solutions to mean curvature flow.
	 \emph{Calc Var}
	29: 281--293.

\bibitem{collin_krust}
	 Collin P, Krust R (1991) Le probl\'eme de Dirichlet pour l'\'equation des surfaces minimales sur des domaines non born\'es.
	 \emph{B Soc Math Fr}
	 119: 443--462. 

\bibitem{collinrosenberg}
	 Collin P, Rosenberg H (2010) Construction of harmonic diffeomorphisms and minimal graphs.
	 \emph{Ann Math}
	 172: 1879--1906.

\bibitem{cmmr}
	 Colombo G, Magliaro M, Mari L, et al. (2020) Bernstein and half-space properties for minimal graphs under Ricci lower bounds.
	 {\em arXiv:1911.12054}.

\bibitem{cmmr_2}
 Colombo G, Magliaro M, Mari L, et al. (2020) A splitting theorem for capillary graphs under Ricci lower bounds.
	 {\em arXiv:2007.15143}.

\bibitem{cmr}
	Colombo G, Mari L, Rigoli M (2020) Remarks on mean curvature flow solitons in warped products.
	 \emph{Discrete Cont Dyn S}
	 13: 1957--1991.

%

\bibitem{DAmbrMit}
	 D'Ambrosio L, Mitidieri E (2010) A priori estimates, positivity results, and nonexistence theorems for quasilinear degenerate elliptic inequalities.
	 \emph{Adv Math}
	 224: 967--1020.


\bibitem{dambrosiomitidieri_2}
 D'Ambrosio L, Mitidieri E (2012) A priori estimates and reduction principles for quasilinear elliptic problems and applications.
	 \emph{Adv Differential Equ}
	 17:  935--1000.

\bibitem{dl_bounded}
	 Dajczer M, de Lira JHS (2015) Entire bounded constant mean curvature Killing graphs.
	 \emph{J Math Pure Appl}
	 103: 219--227.

\bibitem{dl_unbounded}
	 Dajczer M, de Lira JHS (2017) Entire unbounded constant mean curvature Killing graphs.
	 \emph{B Braz Math Soc}
	 48: 187--198.

\bibitem{dg1}
	 De Giorgi E (1965) Una estensione del teorema di Bernstein.
	 \emph{Ann Scuola Norm Sup Pisa}
	 19: 79--85.

\bibitem{dg2}
	 De Giorgi E (1965) Errata-Corrige: ``Una estensione del teorema di Bernstein".
	 \emph{Ann Scuola Norm Sup Pisa Cl Sci}
	 19: 463--463.

\bibitem{ding}
	 Ding Q (2020) Liouville type theorems for minimal graphs over manifolds.
	 {\em arXiv:1911.10306}.

\bibitem{dingjostxin}
	 Ding Q, Jost J, Xin Y (2016) Minimal graphic functions on manifolds of nonnegative Ricci curvature.
	 \emph{Commun Pure Appl Math}
	 69: 323--371.

\bibitem{dingjostxin2}
	 Ding Q, Jost J, Xin Y (2016) Existence and non-existence of area-minimizing hypersurfaces in manifolds of non-negative Ricci curvature.
	 \emph{Am J Math}
	 138: 287--327.

\bibitem{docarmolawson}
	 Do Carmo MP, Lawson HB Jr (1983) On Alexandrov-Bernstein theorems in hyperbolic space.
	 \emph{Duke Math J}
	 50: 995--1003.

\bibitem{docarmo_peng}
	 Do Carmo MP, Peng CK (1979) Stable complete minimal surfaces in $\R^3$ are planes.
	 \emph{B Am Math Soc}
	 1: 903--906.

\bibitem{dughgowa}
	 Dupaigne L, Ghergu M, Goubet O, et al. (2012) Entire large solutions for semilinear elliptic equations.
	 \emph{J Differ Equations}
	 253: 2224--2251.

\bibitem{espi_far_maz}
	 Espinar JM, Farina A, Mazet L (2015) $f$-extremal domains in hyperbolic space.
	{\em arXiv:1511.02659}.

\bibitem{espi_maz}
	 Espinar JM, Mazet L (2019) Characterization of $f$-extremal disks.
	 \emph{J Differ Equations}
 266: 2052--2077.

\bibitem{ES_F_R}
	 do Esp\'irito Santo N, Fornari S, Ripoll JB (2010) The Dirichlet
	problem for the minimal hypersurface equation in $M \times \R$ with prescribed
	asymptotic boundary.
	 \emph{J Math Pure Appl}
	 93: 204--221.

\bibitem{far1}
	 Farina A (2015) A Bernstein-type result for the minimal surface equation.
	 \emph{Ann Scuola Norm Sup Pisa}
	 XIV: 1231--1237.

\bibitem{far2}
	 Farina A (2018) A sharp Bernstein-type theorem for entire minimal graphs.
	 \emph{Calc Var}
	 57: 123.

\bibitem{farina_minimal}
	 Farina A (2007) Liouville-type theorems for elliptic problems, In: 
	 \emph{Handbook of Differential Equations},
	 Elsevier, 60--116.

\bibitem{far_HDR}
	 Farina A (2002) {\em Propri\'et\'es qualitatives de solutions d'\'equations et syst\'emes d’\'equations non-lin\'eaires}, Habilitation \`a diriger des recherches, Paris VI.

\bibitem{far_fra_mar}
	 Farina A, Franz G, Mari L, Splitting and half-space properties for graphs
	with prescribed mean curvature. {\em Preprint}.

\bibitem{fmv}
	 Farina A, Mari L, Valdinoci E (2013) Splitting theorems, symmetry results and overdetermined problems for Riemannian manifolds.
	 \emph{Commun Part Diff Eq}
	 38: 1818--1862, 

\bibitem{fsv}
	 Farina A, Sciunzi B, Valdinoci E (2008) Bernstein and De Giorgi type problems: New results via a geometric approach.
	 \emph{Ann Scuola Norm Sup Pisa Cl Sci}
	 7: 741--791.

\bibitem{farinaserrin1}
	 Farina A, Serrin J (2011) Entire solutions of completely coercive quasilinear elliptic equations.
	 \emph{J Differ Equations}
	 250: 4367--4408.

\bibitem{farinaserrin2}
	 Farina A, Serrin J (2011) Entire solutions of completely coercive quasilinear elliptic equations, II.
	 \emph{J Differ Equations}
	 250: 4409--4436.

\bibitem{far_vald_ARMA}
	 Farina A, Valdinoci E (2010) Flattening results for elliptic PDEs in unbounded domains with applications to overdetermined problems.
	 \emph{Arch Ration Mech Anal}
	195: 1025--1058.

\bibitem{filippucci}
	 Filippucci R (2009) Nonexistence of positive weak solutions of elliptic inequalities.
	 \emph{Nonlinear Anal}
	 8: 2903--2916.

\bibitem{finn_book}
	 Finn R (1986)
	 \emph{Equilibrium Capillary Surfaces},
 New York: Springer-Verlag.

%
%
\bibitem{fcs}
	 Fischer-Colbrie D, Schoen R (1980) The structure of complete stable minimal surfaces in $3$-manifolds of non negative scalar curvature.
 \emph{Commun Pure Appl Math}
	33: 199--211.

\bibitem{flanders}
	 Flanders H (1966) Remark on mean curvature.
	 \emph{J London Math Soc}
	 41: 364--366.

\bibitem{fleming}
	 Fleming WH (1962) On the oriented Plateau problem.
	 \emph{Rend Circolo Mat Palermo}
 9: 69--89.

\bibitem{fraserschoen_1}
	 Fraser A, Schoen R (2011) The first Steklov eigenvalue, conformal geometry, and minimal surfaces.
	 \emph{Adv Math}
	 226:  4011--4030. 

\bibitem{fraserschoen_2}
	 Fraser A, Schoen R (2016) Sharp eigenvalue bounds and minimal surfaces in the ball.
	 \emph{Invent Math}
	 203: 823--890. 

\bibitem{galvezrosenberg}
	 G\'alvez JA, Rosenberg H (2010) Minimal surfaces and harmonic diffeomorphisms from the complex plane onto certain Hadamard surfaces.
	 \emph{Am J Math}
	 132: 1249--1273.


\bibitem{grigoryan}
	 Grigor'yan A (1999) Analytic and geometric background of recurrence and non-explosion of the Brownian motion on Riemannian manifolds.
	 \emph{B Am Math Soc}
	 36: 135--249.

\bibitem{GW}
	 Greene RE, Wu H (1979) $C^\infty$ approximations of convex, subharmonic, and plurisubharmonic functions.
	 \emph{Ann Sci \'Ecole Norm Sup}
	 12: 47--84.

\bibitem{guanspruck}
	 Guan B, Spruck J (2000) Hypersurfaces of constant mean curvature in hyperbolic space with prescribed asymptotic boundary at infinity.
	 \emph{Am J Math}
	 122: 1039--1060.

\bibitem{esko_survey}
	 Heinonen E (2019) Survey on the asymptotic Dirichlet problem for the minimal surface equation.
	{\em arXiv:1909.08437}.

\bibitem{heinz}
	 Heinz E (1955) Uber Fl\"achen mit eindeutiger projektion auf eine ebene, deren kr\"ummungen durch ungleichungen eingschr\"ankt sind.
	 \emph{Math Ann}
	 129: 451--454.

\bibitem{heinz_2}
	 Heinz E (1952) \"Uber die L\"osungen der Minimalfl\"achengleichung.
	 \emph{Nachr Akad Wiss G\"ottingen. Math-Phys Kl Math-Phys-Chem Abt}
	 1952: 51--56.

\bibitem{hr}
	 Holopainen I, Ripoll JB (2015) Nonsolvability of the asymptotic Dirichlet problem for some quasilinear elliptic PDEs on Hadamard manifolds.
	 \emph{Rev Mat Iberoam}
	 31: 1107--1129.

\bibitem{hopf_bern}
	 Hopf E (1950) On S. Bernstein's theorem on surfaces $z(x,y)$ of nonpositive curvature.
	 \emph{P Am Math Soc}
	 1: 80--85.

\bibitem{imperapigolasetti}
	 Impera D, Pigola S, Setti AG (2017) Potential theory for manifolds with boundary and applications to controlled mean curvature graphs.
	 \emph{J Reine Angew Math}
	 733: 121--159.

%
\bibitem{keller}
	 Keller JB (1957) On solutions of $\Delta u=f(u)$.
	 \emph{Commun Pure Appl Math}
	 10: 503--510.

\bibitem{korevaar}
	 Korevaar NJ (1986) An easy proof of the interior gradient bound for solutions of the prescribed mean curvature equation, In:
	 \emph{Proceedings of Symposia in Pure Mathematics},
	 45: 81--89.

\bibitem{lixiong}
	 Li H, Xiong C (2018) Stability of capillary hypersurfaces in a Euclidean ball.
	 \emph{Pac J Math}
	 297: 131--146. 

\bibitem{lixiong_2}
	 Li H, Xiong C (2017) Stability of capillary hypersurfaces with planar boundaries.
	 \emph{J Geom Anal}
	27: 79--94. 

\bibitem{litam_JDG}
	 Li P, Tam LF (1992) Harmonic functions and the structure of complete manifolds.
	\emph{J Differ Geom}
 35: 359--383.

\bibitem{liwang2}
	 Li P, Wang J (2001) Complete manifolds with positive spectrum.
	 \emph{J Differ Geom}
	 58: 501--534.

\bibitem{liwang}
	 Li P, Wang J (2002) Complete manifolds with positive spectrum. II.
	 \emph{J Differ Geom}
	 62: 143--162.

\bibitem{liwang_mini}
	 Li P, Wang J (2001) Finiteness of disjoint minimal graphs.
 \emph{Math Res Lett}
	 8:  771--777.

\bibitem{liwang_crelle}
	 Li P, Wang J (2004) Stable minimal hypersurfaces in
	a nonnegatively curved manifold.
	 \emph{J Reine Angew Math}
	 566: 215--230.

%
%

\bibitem{lopez}
	 L\'opez R (2001) Constant Mean Curvature Graphs on Unbounded Convex Domains.
	 \emph{J Differ Equations}
	 171: 54--62.

\bibitem{lopez_wedge}
	L\'opez R (2014) Capillary surfaces with free boundary in a wedge.
    \emph{Adv Math}
    262: 476--483.
	 
\bibitem{maririgolisetti}
	 Mari L, Rigoli M, Setti AG (2010) Keller-Osserman conditions for diffusion-type operators on Riemannian Manifolds.
	 \emph{J Funct Anal}
	 258: 665--712.

\bibitem{maririgolisetti_mono}
	 Mari L, Rigoli M, Setti AG (2019) On the $1/H$-flow by $p$-Laplace approximation: new estimates via fake distances under Ricci lower bounds.
	{\em arXiv:1905.00216}.

\bibitem{maripessoa}
	 Mari L, Pessoa LF (2020) Duality between Ahlfors-Liouville and Khas'minskii properties for nonlinear equations.
	 \emph{Commun Anal Geom}
	 28: 395--497.

\bibitem{maripessoa_2}
	 Mari L, Pessoa LF (2019) Maximum principles at infinity and the Ahlfors-Khas'minskii duality: An overview, In:
	 \emph{Contemporary Research in Elliptic PDEs and Related Topics},
	  Cham: Springer, 419--455.

\bibitem{marivaltorta}
	 Mari L, Valtorta D (2013) On the equivalence of stochastic completeness, Liouville and Khas'minskii condition in linear and nonlinear setting.
	 \emph{T Am Math Soc}
	365: 4699--4727.

 
\bibitem{mickle}
	 Mickle EJ (1950) A remark on a theorem of Serge Bernstein.
	 \emph{P Am Math Soc}
	 1: 86--89.

\bibitem{mt}
	 Miklyukov V, Tkachev V (1996) Denjoy-Ahlfors theorem for harmonic functions on Riemannian manifolds and external structure of minimal surfaces.
	 \emph{Commun Anal Geom}
	 4: 547--587.

\bibitem{mitpoho}
	 Mitidieri E, Pokhozhaev SI (2001) A priori estimates and the absence of solutions of nonlinear partial differential equations and inequalities.
	 \emph{Tr Mat Inst Steklova}
	 234: 1--384.

%
\bibitem{jmoser}
	 Moser J (1961) On Harnack's theorem for elliptic differential equations.
	 \emph{Commun Pure Appl Math}
	 14: 577--591.

\bibitem{nu}
	 Naito Y, Usami H (1997) Entire solutions of the inequality $\diver(A(|\nabla u |)\nabla u )\ge f(u)$.
	 \emph{Math Z}
	 225: 167--175.

\bibitem{nr}
	 Nelli B, Rosenberg H (2002) Minimal surfaces in $\HH^2 \times \R$.
	 \emph{B Braz Math Soc}
	 33: 263--292.

\bibitem{nitsche}
	 Nitsche JCC (1957) Elementary proof of Bernstein's theorem on minimal surfaces.
	 \emph{Ann Math}
	 66: 543--544.


\bibitem{nunes}
	 Nunes I (2017) On stable constant mean curvature surfaces
	with free boundary.
	 \emph{Math Z}
	 287: 473--479.

\bibitem{omori}
	 Omori H (1967) Isometric immersions of Riemannian manifolds.
	 \emph{J Math Soc JPN}
	 19: 205--214.

\bibitem{osserman}
	 Osserman R (1957) On the inequality $\Delta u\geq f(u)$.
	 \emph{Pac J Math}
	 7: 1641--1647.

\bibitem{prspac}
	 Pigola S, Rigoli M, Setti AG (2002) Some remarks on the prescribed mean curvature equation on complete manifolds.
	 \emph{Pac J Math}
	 206: 195--217.

\bibitem{prsmemoirs}
	 Pigola S, Rigoli M, Setti AG (2005) Maximum principles on Riemannian manifolds and applications.
	 \emph{Mem Am Math Soc}
	 174: 822.

\bibitem{prs}
	 Pigola S, Rigoli M, Setti AG (2008)
	 \emph{Vanishing and Finiteness Results in Geometric Analysis. A Generalization of the B\"ochner Technique},
	  Birk\"auser.

\bibitem{pogorelov}
	 Pogorelov A (1981) On the stability of minimal surfaces.
	 \emph{Soviet Math Dokl}
	 24: 274--276.

%

\bibitem{rigolisetti}
	 Rigoli M, Setti AG (2001) Liouville-type theorems for $\varphi$-subharmonic functions.
	 \emph{Rev Mat Iberoam}
	 17: 471--520.

\bibitem{rite}
	 Ripoll J, Telichevesky M (2019) On the asymptotic Plateau problem for CMC hypersurfaces in hyperbolic space.
	 \emph{B Braz Math Soc}
	 50: 575--585.

\bibitem{rite_1}
	 Ripoll J, Telichevesky M (2015) Regularity at infinity of Hadamard
	manifolds with respect to some elliptic operators and applications to asymptotic Dirichlet problems.
	 \emph{T Am Math Soc}
	 367: 1523--1541.

\bibitem{ros_ruiz_sic_1}
	 Ros A, Ruiz D, Sicbaldi P (2017) A rigidity result for overdetermined elliptic problems in the plane.
	 \emph{Commun Pure Appl Math}
	 70: 1223--1252.

\bibitem{ros_ruiz_sic_2}
	 Ros A, Ruiz D, Sicbaldi P (2020) Solutions to overdetermined elliptic problems in nontrivial exterior domains.
	 \emph{J Eur Math Soc}
	 22: 253--281.


\bibitem{ros_souam}
	 Ros A, Souam R (1997) On stability of capillary surfaces in a ball.
	 \emph{Pac J Math}
	 178: 345--361.

\bibitem{ros_vergasta}
	 Ros A, Vergasta E (1995) Stability for hypersurfaces of constant mean curvature with free boundary.
	 \emph{Geom Dedicata}
	 56: 19--33.

\bibitem{rosenbergschulzespruck}
	 Rosenberg H, Schulze F, Spruck J (2013) The half-space property and entire positive minimal graphs in $M \times \R$.
	 \emph{J Differ Geom}
	 95: 321--336.

\bibitem{salavessa}
	 Salavessa I (1989) Graphs with parallel mean curvature.
	 \emph{P Am Math Soc}
	 107: 449--458.

\bibitem{schoenyau}
	 Schoen R, Yau ST (1976) Harmonic maps and the topology of stable hypersurfaces and manifolds of nonnegative Ricci curvature.
	 \emph{Comment Math Helv}
	 51: 333--341.

%
%
\bibitem{serrin_movingplane}
	 Serrin J (1971) A symmetry theorem in potential theory.
	 \emph{Arch Ration Mech Anal}
	 43: 304--318.

\bibitem{Serrin_4}
	 Serrin J (2009) Entire solutions of quasilinear elliptic equations.
	 \emph{J Math Anal Appl}
	352: 3--14.

\bibitem{sicbaldi}
	 Sicbaldi P (2010) New extremal domains for the first eigenvalue of the Laplacian in flat tori.
	 \emph{Calc Var}
	 37: 329--344.

\bibitem{simon}
	 Simon L (1997) The minimal surface equation. In:
	 \emph{Encyclopaedia of Mathematical Sciences, Geometry, V},
 Berlin: Springer, 239--272.

\bibitem{simon_jdg}
	 Simon L (1989) Entire solutions of the minimal surface equation.
	\emph{J Differ Geom}
	 30: 643--688.

\bibitem{simons}
	 Simons J (1968) Minimal varieties in Riemannian manifolds.
	 \emph{Ann Math}
	 88: 62--105.

\bibitem{stezum}
	 Sternberg P, Zumbrun K (1998) A Poincar\'e inequality with applications to volume constrained area-minimizing surfaces.
	 \emph{J Reine Angew Math}
	 503: 63--85.

\bibitem{sungwang}
	 Sung CJA, Wang J (2014) Sharp gradient estimate and spectral rigidity for $p$-Laplacian.
	 \emph{Math Res Lett}
	 21: 885--904.

\bibitem{tkachev}
	 Tkachev VG (1992) Some estimates for the mean curvature of nonparametric surfaces defined over domains in $\R^n$.
	\emph{Ukr Geom Sb}
	 35: 135--150.


\bibitem{tkachev_2}
	 Tkachev VG (1991) Some estimates for the mean curvature of graphs over domains in $\R^n$.
	 \emph{Dokl Akad Nauk SSSR}
 314: 140--143.

\bibitem{usami}
	 Usami H (1994) Nonexistence of positive entire solutions for elliptic inequalities of the mean curvature type.
	 \emph{J Differ Equations}
	 111: 472--480.

\bibitem{yau}
	 Yau ST (1975) Harmonic functions on complete Riemannian manifolds.
	 \emph{Commun Pure Appl Math}
	 28: 201--228.

\bibitem{wangxia}
	 Wang G, Xia C (2019) Uniqueness of stable capillary hypersurfaces in a ball.
 \emph{Math Ann}
 374: 1845--1882.

\bibitem{jfwang}
	 Wang JF (2003) How many theorems can be derived from a vector function - On uniqueness theorems for the minimal surface equation.
	 \emph{Taiwanese J Math}
	 7: 513--539. 

\bibitem{wang_annals}
	 Wang XJ (2011) Convex solutions to the mean curvature flow.
	 \emph{Ann Math}
 173: 1185--1239.

\bibitem{weinberger}
 Weinberger HF (1971) Remark on the preceding paper of Serrin.
	 \emph{Arch Ration Mech Anal}
	 43: 319--320.






\end{thebibliography}
\end{document}